 % !TeX spellcheck = en_GB
\documentclass[reqno, 10pt]{amsart}
\usepackage{amsmath, amsthm, amssymb, amstext}

\usepackage{mathrsfs}

\usepackage{hyperref,xcolor}% http://ctan.org/pkg/{hyperref,xcolor}
\hypersetup{
 pdfborder={0 0 0},
 colorlinks,
 %linkcolor=black,
 %citecolor=black
}
\usepackage[left=2.5cm,right=2.5cm,top=3cm,bottom=3cm]{geometry}

\usepackage[colorinlistoftodos]{todonotes}
\usepackage{enumitem}
\setlength{\parindent}{1.2em}

\makeatletter \@addtoreset{equation}{section} \makeatother

\setlength{\parindent}{1em}

\newtheorem{theorem}{Theorem}[section]

\newtheorem{proposition}[theorem]{Proposition}
\newtheorem{lemma}[theorem]{Lemma}
\newtheorem{remark}[theorem]{Remark}

\theoremstyle{definition}

\newtheoremstyle{case}{}{}{}{}{}{\em:}{ }{}
\theoremstyle{case}
\newtheorem{case}{\it Case}
\newtheorem{subcase}{\it Subcase}
\numberwithin{subcase}{case}

\allowdisplaybreaks

\DeclareMathOperator*{\divergenz}{div}

\newcommand{\RN}{\mathbb{R}^N}
\newcommand{\RR}{\mathbb R}
\newcommand{\NN}{\mathbb N}

\newcommand{\R}{\mathbb{R}}
\newcommand{\cH}{\mathcal{H}}
\newcommand{\nap}{\Vert \nabla u\Vert_p}
\newcommand{\naq}{\Vert \nabla u\Vert_{q,a}}

\newcommand{\rof}{\varrho_\cH(u)}
\newcommand{\roc}{\widehat{\varrho}_\cH(u)}

\newcommand{\eps}{\varepsilon}

\newcommand{\Om}{\Omega}
\newcommand{\rand}{\partial\Omega}

\newcommand{\into}{\int_{\Omega}}
\newcommand{\intor}{\int_{\partial\Omega}}

\newcommand{\Linf}{L^{\infty}(\Omega)}
\newcommand{\close}{\overline{\Omega}}

\newcommand{\soh}{W^{1,\mathcal{H}}(\Om)}
\newcommand{\normasoh}[1]{\Vert #1\Vert_{1,\mathcal{H}}}

\renewcommand{\l}{\left}
\renewcommand{\r}{\right}
\newcommand{\Hi}{\mathcal H}

\numberwithin{theorem}{section}
\numberwithin{equation}{section}
\definecolor{champagne}{rgb}{0.97, 0.91, 0.81}
\definecolor{aoo}{rgb}{0.0, 0.5, 0.0}

\begin{document}

%%%%%%%%%%%%%%%%%%%%%%%%%%%%%%%%%%%%%%%%%%%%%%%%%%%%%%

\title[Multiple solutions to Kirchhoff type double phase problems]
{Multiple solutions for nonlinear boundary value problems of Kirchhoff type on a double phase setting}

\author[Alessio Fiscella]{Alessio Fiscella}
\address[Alessio Fiscella]{Dipartimento di Matematica e Applicazioni, Universit\`a degli Studi di Milano-Bicocca, Via Cozzi 55, Milano, CAP 20125, Italy}
\email{alessio.fiscella@unimib.it}

\author[Greta Marino]{Greta Marino}
\address[Greta Marino]{Technische Universit\"{a}t Chemnitz, Fakult\"{a}t f\"{u}r Mathematik, Reichenhainer Stra\ss e 41, 09126 Chemnitz, Germany}
\email{greta.marino@mathematik.tu-chemnitz.de}

\author[Andrea Pinamonti]{Andrea Pinamonti}
\address[Andrea Pinamonti]{Dipartimento di Matematica,	Universit\`a degli Studi di Trento,
	Via Sommarive 14, 38123, Povo (Trento), Italy}
\email{andrea.pinamonti@unitn.it}

\author[Simone Verzellesi]{Simone Verzellesi}
\address[Simone Verzellesi]{Dipartimento di Matematica,	Universit\`a degli Studi di Trento,
	Via Sommarive 14, 38123, Povo (Trento), Italy}
\email{simone.verzellesi@unitn.it}

\subjclass[2010]{35J20; 35J62; 35J92; 35P30.} \keywords{Kirchhoff coefficients; double phase problems; nonlinear boundary
conditions; variational methods.}

\date{\today}
\maketitle

%%%%%%%%%%%%%%%%%%%%%%%%%%%%%%%%%%%%%%%%%%%%%%%%%%%%%%%%%%%%%%%%%%%%%%%%

\begin{abstract} 
This paper deals with some classes of Kirchhoff type problems on a double phase setting and with nonlinear boundary conditions. Under general assumptions, we provide multiplicity results for such problems in the case when  the perturbations exhibit a suitable behavior in the origin and at infinity, or when they do not necessarily satisfy the Ambrosetti-Rabinowitz condition. To this aim, we combine variational methods, truncation arguments and topological tools.

\end{abstract}

\maketitle

\section{Introduction}\label{sec:introduction}

In this paper, we study multiplicity results for some classes of double phase problems of Kirchhoff type with nonlinear boundary conditions. More precisely, we consider the problems
\begin{equation}\label{P}
    \left\{
\begin{aligned}
&-M\left[\displaystyle\int_\Omega\left(\frac{|\nabla u|^p}{p}+a(x)\frac{|\nabla u|^q}{q}\right)dx\right]
\mbox{div}\left(|\nabla u|^{p-2}\nabla u+a(x)|\nabla u|^{q-2}\nabla u\right)= h_1(x,u) \quad && \mbox{in } \Omega,\\[1.1ex]
&M\left[\displaystyle\int_\Omega\left(\frac{|\nabla u|^p}{p}+a(x)\frac{|\nabla u|^q}{q}\right)dx\right]\left(|\nabla u|^{p-2}\nabla u+a(x)|\nabla u|^{q-2}\nabla u\right)\cdot \nu = h_2(x,u) && \mbox{on } \partial\Omega,
\end{aligned}
\right.
\end{equation}
and 
\begin{equation}\label{P2}
    \left\{
\begin{aligned}
&-M\l(\into |\nabla u|^p dx\r) \Delta_p u- M\l(\into a(x) |\nabla u|^q dx \r) \divergenz \l(a(x) |\nabla u|^{q-2} \nabla u\r)= h_1(x, u) \quad && \text{in } \Om, \\[1ex]
&\left[M\l(\into |\nabla u|^p dx \r)|\nabla u|^{p-2}\nabla u+M\l(\into a(x) |\nabla u|^q dx \r)a(x)|\nabla u|^{q-2}\nabla u\right]\cdot \nu = h_2(x,u) && \mbox{on } \partial\Omega,
\end{aligned}
\right.
\end{equation}
where \textit{along the paper, and without further mentioning,} $\Om \subset \RR^N$, $N>1$, is a bounded domain with $C^{1, \alpha}$-  boundary $\rand$, $\alpha \in (0, 1]$, $\nu(x)$ is the outer unit normal of $\Om$ at the point $x \in \rand$, $1<p<q<N$, and
	\begin{equation}
	\label{condizione1}
	 q<p^*, \quad a\colon \close \to [0, \infty) \text{ is in }L^\infty(\Omega),
	\end{equation}
with $p^*=Np/(N-p)$ being the critical Sobolev exponent, see \eqref{critical-exp} for its definition.
We assume that $M\colon [0, \infty) \to [0, \infty)$ is a \textit{continuous} function satisfying the following assumptions:

\begin{enumerate}
\item[$(M_1)$]
\textit{there exists} $\theta\geq1$ \textit{such that} $t M(t)\le \theta\mathscr M(t)$ \textit{for all} $t\in[0,\infty)$,
\textit{where} $\mathscr M(t)=\displaystyle\int_0^t M(\tau)\;d\tau$;
\item[$(M_2)$]
\textit{for all} $\tau>0$ \textit{there exists} $\kappa=\kappa(\tau)>0$ \textit{such that} $M(t)\geq \kappa$
\textit{for all} $t\geq \tau$.
\end{enumerate}

\vspace{0.2cm}

Moreover, $h_1\colon \Om \times \RR \to \RR$ and $h_2\colon \rand \times \RR \to \RR$ are \textit{Carath\'eodory} functions whose properties will be specified in the sequel, case by case. A classical model for $M$ verifying $(M_1)$-$(M_2)$ due to Kirchhoff is given by $M(t)=a+bt^{\theta-1}$, where $a$, $b\geq0$ and $a+b>0$.

Problems \eqref{P} and \eqref{P2} are said of double phase type because of the presence of two different elliptic growths $p$ and $q$. The study of double phase problems and related functionals originates from the seminal paper by Zhikov \cite{Z1}, where he introduced the functional
\begin{equation}\label{funzionale}
u\mapsto\int_\Omega\left(|\nabla u|^p+a(x)|\nabla u|^q\right)dx\quad\mbox{with }1<p<q,\quad a(\cdot)\geq0,
\end{equation}
in order to provide models for strongly anisotropic materials. Indeed, the weight coefficient $a(\cdot)$ dictates the geometry of composites made of two different materials with distinct power hardening exponents $p$ and $q$. From the mathematical point of view, \eqref{funzionale} is a prototype of a functional whose integrands change their ellipticity according to the points where $a(\cdot)$ vanishes or not. In this direction, Zhikov found other mathematical applications for \eqref{funzionale} in the study of duality theory and of the Lavrentiev gap phenomenon, as shown in \cite{Z2,Z3,ZK}. Furthermore, \eqref{funzionale} falls into the class of functionals with non-standard growth conditions, according to Marcellini's definition given in \cite{M1,M2}. Following this line of research, Mingione et al. provide different regularity results for minimizers of \eqref{funzionale}, see \cite{BCM,BCM2,CM1,CM2}. In \cite{CS}, Colasuonno and Squassina analyze the eigenvalue problem with Dirichlet boundary condition of the double phase operator, explicitly appearing in \eqref{P}, that is,
\begin{equation}\label{operatore}
u\mapsto\mbox{div}\left(|\nabla u|^{p-2}\nabla u+a(x)|\nabla u|^{q-2}\nabla u\right),
\end{equation}
whose energy functional is given by \eqref{funzionale}. 

Starting from \cite{CS}, several authors studied existence and multiplicity results for nonlinear problems driven by \eqref{operatore}, such as in \cite{CGHW,EMW,FFW,FW,F,FP,GW1,GLL,LD,PS} with the help of different variational techniques. In particular, in \cite{FP} Fiscella and Pinamonti provide existence and multiplicity results for Kirchhoff double phase problems but with Dirichlet boundary condition. That is, in \cite{FP} we have
\begin{equation}\label{P3}
\left\{\begin{array}{ll}
-M\left[\displaystyle\int_\Omega\left(\frac{|\nabla u|^p}{p}+a(x)\frac{|\nabla u|^q}{q}\right)dx\right]
\mbox{div}\left(|\nabla u|^{p-2}\nabla u+a(x)|\nabla u|^{q-2}\nabla u\right)=f(x,u) & \mbox{in } \Omega,\\
u=0 & \mbox{in } \partial\Omega,
\end{array}
\right.
\end{equation}
and 
$$
\left\{\begin{array}{ll}
-M\left(\displaystyle\int_\Omega|\nabla u|^p dx\right)\Delta_p u-M\left(\displaystyle\int_\Omega a(x)|\nabla u|^q dx\right)
\mbox{div}\left(a(x)|\nabla u|^{q-2}\nabla u\right)=f(x,u) & \mbox{in } \Omega,\\
u=0 & \mbox{in } \partial\Omega,
\end{array}
\right.
$$
where the nonlinear subcritical term $f$ satisfies the classical Ambrosetti-Rabinowitz (AR) condition, crucial to prove the boundedness of the so-called Palais-Smale sequences.

The aim of the present paper is to provide different multiplicity results for some classes of \eqref{P} and \eqref{P2}, that will heavily depend on the properties of the involved nonlinearities. More specifically, in the first part of the work, we will study existence of at least two constant sign solutions (more precisely, one nonnegative and one nonpositive), under the assumption that the nonlinearities satisfy some strong conditions such as the superlinearity at $\pm \infty$. These results are inspired by the truncation arguments used in \cite{EMW}, where the authors studied existence of constant sign solutions in a double phase setting but without the Kirchhoff coefficient, namely, with $M \equiv 1$. However, the presence of the nonlocal Kirchhoff coefficient in \eqref{P} and \eqref{P2} makes the comparison analysis more intriguing than \cite{EMW}. For this, our multiplicity result will depend on the value $M(0)$, beside the first eigenvalue of the Robin eigenvalue problem for the $p$-Laplacian, as required in \cite{EMW}. Other results for existence of solutions with sign information in a double phase setting can be found, for example, in \cite{GW2019, PRR, GP}.

In the second part of the article we will study multiplicity of solutions in the case when the nonlinearities on the right hand side do not necessarily satisfy the (AR) condition. In this direction, we can mention \cite{GW1,GLL} where they consider problems \eqref{P} and \eqref{P3} with $M\equiv1$, namely without the Kirchhoff coefficient. In \cite{GLL}, Ge et al. exploit the classical mountain pass theorem and the Krasnoselskii's genus theory in order to prove existence and multiplicity results for \eqref{P3}, replacing the (AR) condition with
\begin{equation}\label{ipotesi}
\mathcal F(x,t)\leq\mathcal F(x,s)+C_0,\quad\mbox{where }\mathcal F(x,t)=tf(x,t)-qF(x,t)\mbox{ and }C_0>0
\end{equation}
for a.e. $x\in\Omega$, $0<t<s$ or $s<t<0$, where $F$ is the primitive of $f$ with respect the second variable. On the other hand, in \cite{GW1} Gasi\'nski and Winkert prove the existence of two solutions of \eqref{P} with $h_1(x,t)=f(x,t)-|t|^{p-2}t-a(x)|t|^{q-2}t$, with  the following quasi-monotonic assumption for $f$
\begin{equation}\label{ipotesi2}
t\mapsto tf(x,t)-qF(x,t)\mbox{ is nondecreasing in }\mathbb R^+\mbox{ and nonincreasing in }\mathbb R^-,
\end{equation}
for a.e. $x\in\Omega$. Because of the presence of a nonlocal Kirchhoff coefficient $M$ in \eqref{P}, similar hypothesis to \eqref{ipotesi} and \eqref{ipotesi2} can not work, even if we assume a monotonic assumption for $t\mapsto \theta\mathscr M(t)-M(t)t$. This is a consequence of the fact that $M$ in \eqref{P} does not depend on the norm of $W^{1,\mathcal H}(\Omega)$, that is the functional space  where we look for the solutions to \eqref{P} and \eqref{P2}. Indeed, as shown in detail in Section \ref{sec2}, $W^{1,\mathcal H}(\Omega)$ is a suitable Musielak-Orlicz Sobolev space endowed with a norm of Luxemburg type. For this reason, we need different conditions for the nonlinearities in \eqref{P} and \eqref{P2} than in \cite{GW1,GLL}.

The paper is organized as follows. In Section \ref{sec2}, we recall the main properties of Musielak-Orlicz Sobolev spaces $W^{1,\mathcal H}(\Omega)$
 and we state some technical lemmas concerning the Kirchhoff coefficient $M$. In Section \ref{sec3}, for each problem we prove the existence of two constant sign solutions, more precisely one nonnegative and one nonpositive, combining the ideas of \cite{EMW, FP}. In order to show the results, we require some strong conditions on the nonlinearities, like the superlinearity at $\pm \infty$. We also point out that the proofs depend on the first eigenvalue of the Robin eigenvalue problem for the $p$-Laplacian. 
In the last part of the paper, namely Section \ref{sec4}, we show the existence of infinitely many solutions to a class of \eqref{P} and \eqref{P2}. In particular, we introduce suitable assumptions for the nonlinearities in order to avoid the (AR) condition.

\section{Preliminaries}\label{sec2}

In this section we introduce the basic notation of our paper, the functional space where we find the solutions to \eqref{P}, \eqref{P2} and we present some technical lemmas for $M$ that will be used in the sequel. 

For all $1 \le r< \infty$ we denote by $L^r(\Om)$ and $L^r(\Om; \RN)$ the usual Lebesgue spaces equipped with the norm $\|\cdot\|_r$. The corresponding Sobolev space is denoted by $W^{1,r}(\Om)$, endowed with the norm $\|\cdot\|_{1,r}$. 
On the boundary $\rand$ of $\Om$ we consider the $(N-1)$-dimensional Hausdorff measure $\sigma$ and denote by $L^r(\rand)$ the boundary Lebesgue space with corresponding norm $\|\cdot\|_{r, \rand}$.
As matter of notations, for all $t\in \RR $ we set $t^{\pm}:= \max\{\pm t, 0\}$ and similarly we denote by $u^{\pm}(\cdot):= u(\cdot)^{\pm}$ the positive and negative parts of a function $u$.

The function $\mathcal H\colon \Omega\times[0,\infty)\to[0,\infty)$ defined as
	\[
	\mathcal H(x,t):=t^p+a(x)t^q \quad\mbox{for a.e. }x\in\Omega\mbox{ and for all }t\in[0,\infty),
	\]
with $1<p<q$ and $0\leq a \in L^1(\Omega)$, is a generalized N-function (N stands for {\em nice}), according to the definition in \cite{D,M}, and satisfies the so-called $(\Delta_2)$ condition, that is,
	\[
	\mathcal H(x,2t)\leq 2^q\mathcal H(x,t) \quad\mbox{for a.e. }x\in\Omega\mbox{ and for all }t\in[0,\infty).
	\]
Then, we introduce the $\mathcal H$-modular function $\varrho_{\mathcal H}$ given by
	\begin{equation}\label{rhoh}
	\varrho_{\mathcal H}(u):=\int_\Omega\mathcal H(x,|u|) \;dx=\int_\Omega\left(|u|^p+a(x)|u|^q\right) dx.
	\end{equation}
Therefore, by \cite{M} we can define the Musielak-Orlicz space $L^{\mathcal H}(\Omega)$ as
	\[
	L^{\mathcal H}(\Omega):=\left\{u:\Omega\to\overline{\mathbb R}\,\bigl| \, u\mbox{ is measurable and } \varrho_{\mathcal H}(u)<\infty\right\},
	\]
endowed with the Luxemburg norm
	\[
	\|u\|_{\mathcal H}:=\inf\left\{\lambda>0:\,\,\varrho_{\mathcal H}\left(\frac{u}{\lambda}\right)\leq1\right\}.
	\]
From \cite{CS,D}, the space $L^{\mathcal H}(\Omega)$ is a separable, uniformly convex, Banach space.
On the other hand, from \cite[Proposition 2.1]{LD} we have the following relation between the norm $\|\cdot\|_{\mathcal H}$ and the $\mathcal H$-modular function.

\begin{proposition}\label{P2.1}
Assume that $u\in L^{\mathcal H}(\Omega)$, $(u_j)_j\subset
L^{\mathcal H}(\Omega)$, and $c>0$. Then,
	\begin{itemize}
	\item[$(i)$] If $ u\neq0$, then $ \|u\|_{\mathcal H}=c$ if and only if $\displaystyle \varrho_{\mathcal H}\left(\frac{u}{c}\right)=1$;
	\vspace{0.1cm}
	\item[$(ii)$] $\|u\|_{\mathcal H}<1$ $(resp.=1,\,>1)$ if and only if  $ \varrho_{\mathcal H}(u)<1$ $(resp.=1,\,>1)$;
	\vspace{0.1cm}
	\item[$(iii)$] If $\|u\|_{\mathcal H}<1$, then $ \|u\|_{\mathcal H}^q\leq\varrho_{\mathcal H}(u)\leq\|u\|_{\mathcal H}^p$;
	\vspace{0.1cm}
	\item[$(iv)$] If $\|u\|_{\mathcal H}>1$, then $\|u\|_{\mathcal H}^p\leq\varrho_{\mathcal H}(u)\leq\|u\|_{\mathcal H}^q$;
	\vspace{0.1cm}
	\item[$(v)$] $\lim\limits_{j\rightarrow\infty}\|u_j\|_{\mathcal H}=0\,(\text{resp. } \infty) $ if and only if  $\lim\limits_{j\rightarrow\infty}\varrho_{\mathcal H}(u_j)=0\,(\text{resp. } \infty)$.
	\end{itemize}
\end{proposition}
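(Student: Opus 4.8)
The plan is to reduce all five statements to the elementary scaling behaviour of the modular $\varrho_{\mathcal H}$ forced by the two-power structure $\mathcal H(x,t)=t^p+a(x)t^q$. First I would record that, for every $\lambda>0$ and every $u\in L^{\mathcal H}(\Omega)$,
\[
\min\{\lambda^p,\lambda^q\}\,\varrho_{\mathcal H}(u)\le\varrho_{\mathcal H}(\lambda u)\le\max\{\lambda^p,\lambda^q\}\,\varrho_{\mathcal H}(u),
\]
which is immediate from $\mathcal H(x,\lambda t)=\lambda^p t^p+a(x)\lambda^q t^q$ once one observes that, since $p<q$, the power $\lambda^p$ dominates for $\lambda\le1$ and $\lambda^q$ dominates for $\lambda\ge1$. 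Alongside this I would note that, thanks to the $(\Delta_2)$-condition and dominated convergence, the map $\lambda\mapsto\varrho_{\mathcal H}(u/\lambda)$ is continuous on $(0,\infty)$, and that for $u\neq0$ it is in fact \emph{strictly decreasing}, tending to $0$ as $\lambda\to\infty$ and to $+\infty$ as $\lambda\to0^+$ (the latter by monotone convergence, since $\mathcal H(x,|u|/\lambda)\ge(|u|/\lambda)^p\to\infty$ on the set where $u\neq0$).

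Part $(i)$ is the heart of the argument, and the step I expect to require the most care, since it is where the abstract infimum defining the Luxemburg norm must be shown to be attained and characterised. The strict monotonicity and continuity just recorded force $\lambda\mapsto\varrho_{\mathcal H}(u/\lambda)$ to cross the value $1$ exactly once, at a unique $\lambda_0>0$; consequently the sublevel set $\{\lambda>0:\varrho_{\mathcal H}(u/\lambda)\le1\}$ is precisely $[\lambda_0,\infty)$, so its infimum is $\lambda_0$. This gives $\|u\|_{\mathcal H}=\lambda_0$, i.e. $\|u\|_{\mathcal H}=c$ if and only if $\varrho_{\mathcal H}(u/c)=1$.

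Parts $(ii)$–$(iv)$ would then follow by combining $(i)$ with the scaling inequalities. For $(ii)$ it suffices to use the case $c=1$ of $(i)$, namely $\|u\|_{\mathcal H}=1\iff\varrho_{\mathcal H}(u)=1$ (the case $u=0$ being trivial), after which the two strict inequalities transfer across this threshold through the monotonicity of $\lambda\mapsto\varrho_{\mathcal H}(u/\lambda)$. For $(iv)$, writing $c=\|u\|_{\mathcal H}>1$ and applying the scaling bound to $u=c\,(u/c)$ with $\varrho_{\mathcal H}(u/c)=1$ yields $c^p\le\varrho_{\mathcal H}(u)\le c^q$; part $(iii)$ is identical, except that now $c<1$ makes $\lambda^q$ the smaller power, so the roles of the exponents are exchanged.

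Finally, part $(v)$ is a direct consequence of $(iii)$–$(iv)$. Once $\|u_j\|_{\mathcal H}<1$ the sandwich $\|u_j\|_{\mathcal H}^q\le\varrho_{\mathcal H}(u_j)\le\|u_j\|_{\mathcal H}^p$ shows that $\|u_j\|_{\mathcal H}\to0$ exactly when $\varrho_{\mathcal H}(u_j)\to0$, and symmetrically the inequalities of $(iv)$ handle the divergence case once $\|u_j\|_{\mathcal H}>1$; part $(ii)$ is used to guarantee that eventually one sits in the appropriate regime ($<1$ or $>1$) so that the correct sandwich applies.
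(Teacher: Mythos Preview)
Your proof is correct and self-contained; the paper itself does not prove this proposition but simply cites \cite[Proposition 2.1]{LD}. The argument you give---attainment of the Luxemburg infimum via strict monotonicity and continuity of $\lambda\mapsto\varrho_{\mathcal H}(u/\lambda)$, followed by the two-sided scaling bounds coming from the explicit $p$/$q$ power structure of $\mathcal H$---is the standard route and exactly what one would find by unwinding the reference.
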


The related Musielak-Orlicz Sobolev space $W^{1,\mathcal H}(\Omega)$ is defined by
$$W^{1,\mathcal H}(\Omega):=\left\{u\in L^{\mathcal H}(\Omega):\,\,|\nabla u|\in L^{\mathcal H}(\Omega)\right\},
$$
endowed with the norm
\begin{equation}\label{norma}
\|u\|_{1,\mathcal H}:=\|u\|_{\mathcal H}+\||\nabla u|\|_{\mathcal H}.
\end{equation}
Along the paper, we write $\|\nabla u\|_{\mathcal H}:=\||\nabla u|\|_{\mathcal H}$ and $\varrho_{\mathcal H}(\nabla u):=\varrho_{\mathcal H}(|\nabla u|)$ in order to simplify the notation.

We also define the complete
$\mathcal H$-modular function $\widehat{\varrho}_{\mathcal H}$ as
\begin{equation}\label{rhotilde}
	\widehat{\varrho}_{\mathcal H}(u):=\varrho_{\mathcal H}(\nabla u)+\varrho_{\mathcal H}(u)
	=\int_\Omega\left(|\nabla u|^p+a(x)|\nabla u|^q+|u|^p+a(x)|u|^q\right) dx.
\end{equation}
Hence, following the proof of \cite[Proposition 2.1]{LD} we can relate $\widehat \varrho_{\cH}$ and the norm $\|\cdot\|_{1, \cH}$ as follows. 

\begin{proposition}\label{P2.2}
Assume that $u\in W^{1,\mathcal H}(\Omega)$, $(u_j)_j\subset
W^{1,\mathcal H}(\Omega)$, and $c>0$. Then,
	\begin{itemize}
	\item[$(i)$] If $ u\neq0$, then $ \|u\|_{1,\mathcal H}=c$ if and only if $\displaystyle \widehat{\varrho}_{\mathcal H}\left(\frac{u}{c}\right)=1$;
	\vspace{0.1cm}
	\item[$(ii)$] $\|u\|_{1,\mathcal H}<1$ $(resp.=1,\,>1)$ if and only if  $ \widehat{\varrho}_{\mathcal H}(u)<1$ $(resp.=1,\,>1)$;
	\vspace{0.1cm}
	\item[$(iii)$] If $\|u\|_{1,\mathcal H}<1$, then $ \|u\|_{1,\mathcal H}^q\leq\widehat{\varrho}_{\mathcal H}(u)\leq\|u\|_{1,\mathcal H}^p$;
	\vspace{0.1cm}
	\item[$(iv)$] If $\|u\|_{1,\mathcal H}>1$, then $\|u\|_{1,\mathcal H}^p\leq\widehat{\varrho}_{\mathcal H}(u)\leq\|u\|_{1,\mathcal H}^q$;
	\vspace{0.1cm}
	\item[$(v)$] $\lim\limits_{j\rightarrow\infty}\|u_j\|_{1,\mathcal H}=0\,(\text{resp. } \infty) $ if and only if  $\lim\limits_{j\rightarrow\infty}\widehat{\varrho}_{\mathcal H}(u_j)=0\,(\text{resp. } \infty)$.
	\end{itemize}
\end{proposition}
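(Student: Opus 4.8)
The plan is to mimic the proof of \cite[Proposition 2.1]{LD}, replacing the $\mathcal H$-modular $\varrho_{\mathcal H}$ with the complete modular $\widehat{\varrho}_{\mathcal H}$ and $\|\cdot\|_{\mathcal H}$ with $\normasoh{\cdot}$, the latter being read as the Luxemburg functional $\normasoh{u}=\inf\{\lambda>0:\widehat{\varrho}_{\mathcal H}(u/\lambda)\le1\}$ generated by $\widehat{\varrho}_{\mathcal H}$ on $\soh$. The cleanest way to see that the argument transfers verbatim is to notice that $\widehat{\varrho}_{\mathcal H}(u)=\varrho_{\mathcal H}(u)+\varrho_{\mathcal H}(\nabla u)$ is nothing but the $\mathcal H$-modular of the pair $(u,\nabla u)$ computed on the disjoint union $\Omega\sqcup\Omega$; since $\mathcal H$ retains the same $(\Delta_2)$ property and the same pair of growth exponents $p<q$ on the doubled domain, the map $u\mapsto(u,\nabla u)$ identifies $\soh$ isometrically with a subspace of $L^{\mathcal H}(\Omega\sqcup\Omega)$, so that \cite[Proposition 2.1]{LD} applies directly to $\widehat{\varrho}_{\mathcal H}$.

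The engine of the whole statement is the elementary two-sided scaling estimate for $\widehat{\varrho}_{\mathcal H}$. Writing $\widehat{\varrho}_{\mathcal H}(\lambda u)=\lambda^p\int_\Omega(|\nabla u|^p+|u|^p)\,dx+\lambda^q\int_\Omega a(x)(|\nabla u|^q+|u|^q)\,dx$ and comparing the exponents $p<q$, one gets for every $u\in\soh$
\[
\lambda^q\,\widehat{\varrho}_{\mathcal H}(u)\le\widehat{\varrho}_{\mathcal H}(\lambda u)\le\lambda^p\,\widehat{\varrho}_{\mathcal H}(u)\qquad\text{if }0<\lambda\le1,
\]
and the reversed chain, with $p$ and $q$ swapped, if $\lambda\ge1$. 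Together with the fact that, for $u\neq0$, the map $\lambda\mapsto\widehat{\varrho}_{\mathcal H}(u/\lambda)$ is continuous and strictly decreasing on $(0,\infty)$ with limits $+\infty$ as $\lambda\to0^+$ and $0$ as $\lambda\to\infty$ (both immediate from the explicit formula above), this yields $(i)$: the infimum defining $\normasoh{u}$ is attained and $\normasoh{u}=c$ holds precisely when $\widehat{\varrho}_{\mathcal H}(u/c)=1$. Item $(ii)$ then follows by evaluating the same monotone map at $\lambda=1$.

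Items $(iii)$ and $(iv)$ are read off directly by combining $(i)$ with the scaling estimate: if $c=\normasoh{u}<1$, choosing $\lambda=c$ and using $\widehat{\varrho}_{\mathcal H}(u/c)=1$ gives $c^q\le\widehat{\varrho}_{\mathcal H}(u)\le c^p$, while if $c>1$ the case $\lambda\ge1$ gives $c^p\le\widehat{\varrho}_{\mathcal H}(u)\le c^q$. Finally $(v)$ follows from $(iii)$--$(iv)$ by a squeeze as $\normasoh{u_j}\to0$ or $\to\infty$. I expect the only genuinely delicate point to be $(i)$, namely justifying that the Luxemburg infimum is attained and is characterized by the modular equation; this is exactly where the strict monotonicity and the two boundary limits of $\lambda\mapsto\widehat{\varrho}_{\mathcal H}(u/\lambda)$ must be invoked, and it is the step that the structure of $\widehat{\varrho}_{\mathcal H}$, inherited from $\mathcal H$ satisfying $(\Delta_2)$, makes work exactly as in \cite[Proposition 2.1]{LD}.
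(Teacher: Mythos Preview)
Your approach matches the paper's: the paper gives no argument beyond ``following the proof of \cite[Proposition 2.1]{LD}'', and you spell out that argument (scaling estimate, strict monotonicity in $\lambda$, squeeze). That part is fine.

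There is, however, a genuine issue you half-acknowledge but do not resolve. The paper defines $\|u\|_{1,\mathcal H}:=\|u\|_{\mathcal H}+\|\nabla u\|_{\mathcal H}$ in \eqref{norma} as a \emph{sum} of two Luxemburg norms, whereas your entire argument hinges on reading $\|\cdot\|_{1,\mathcal H}$ as the Luxemburg norm $\inf\{\lambda>0:\widehat\varrho_{\mathcal H}(u/\lambda)\le 1\}$ associated with the complete modular. These two norms are equivalent but \emph{not} equal, and with the sum norm the proposition is simply false as written. For instance, take $a\equiv 0$ and $u$ with $\|u\|_p=\|\nabla u\|_p=\tfrac12$: then $\|u\|_{1,\mathcal H}=1$ in the sum sense, while $\widehat\varrho_{\mathcal H}(u)=2\cdot(1/2)^p=2^{1-p}<1$, killing $(i)$ and $(ii)$. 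Item $(iii)$ likewise fails at the lower end, since $\|u\|_{\mathcal H}^q+\|\nabla u\|_{\mathcal H}^q\ge(\|u\|_{\mathcal H}+\|\nabla u\|_{\mathcal H})^q$ is wrong for $q>1$.

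So your ``reading'' of the norm is not a harmless convention; it is a silent replacement of the object defined in the paper by a different (equivalent) one, and it is precisely what makes the statement true. You should say this explicitly: either note that the proposition is being stated for the Luxemburg norm associated with $\widehat\varrho_{\mathcal H}$ (and that this is equivalent to the sum norm in \eqref{norma}, so no harm is done downstream), or point out that the paper's formulation is inconsistent with its own definition \eqref{norma}. Your disjoint-union trick $\Omega\sqcup\Omega$ is a clean way to justify that the Luxemburg norm of $\widehat\varrho_{\mathcal H}$ inherits all of Proposition~\ref{P2.1}; keep it, but flag the norm discrepancy rather than slipping it in.
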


For all $1< p< N$ let $p^*$ and $p_*$ denote the critical Sobolev exponents of $p$, defined as
	\begin{equation}
	\label{critical-exp}
	p^*=
	\displaystyle \frac{Np}{N-p}
	\quad \text{as well as} \quad
	p_*=
	\displaystyle \frac{(N-1)p}{N-p}.
	\end{equation}
Furthermore, we define the weighted space
	\[
	L^q_a(\Omega):=\left\{u:\Omega\to\mathbb R\,\bigl| \, u\mbox{ is measurable and } \int_\Omega a(x)|u|^q \;dx<\infty\right\},
	\]
equipped with the seminorm
	\[
	\|u\|_{q,a}:=\left(\int_\Omega a(x)|u|^q \;dx\right)^{1/q}.
	\]
In a similar way we can define $L^q_a(\Om; \RN)$ and the associated seminorm.
 Thus, by \cite[Proposition 2.1]{EMW} we have the following embeddings for $L^\cH(\Om)$ and $\soh$.

\begin{proposition}\label{P2.3}
Let \eqref{condizione1} be satisfied. Then, the following embeddings hold:
\begin{itemize}
    \item [$(i)$] $L^\cH(\Om)\hookrightarrow L^r(\Om)$ and $\soh\hookrightarrow W^{1,r}(\Om)$ are continuous for all $r\in[1,p]$;
    \item [$(ii)$] $\soh\hookrightarrow L^{\nu_1}(\Om)$ is compact for all $\nu_1\in[1,p^*)$;
    \item [$(iii)$] $\soh\hookrightarrow L^{\nu_2}(\partial\Om)$ is compact for all $\nu_2\in[1,p_*)$;
    \item [$(iv)$] $L^q(\Om)\hookrightarrow L^\cH(\Om)\hookrightarrow L_a^q(\Om)$ are continuous;
    \item[$(v)$] $\soh\hookrightarrow L^\cH(\Om)$ is compact.
    \end{itemize}
\end{proposition}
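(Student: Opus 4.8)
The plan is to derive all five embeddings by comparing the generalized $N$-function $\cH(x,t)=t^p+a(x)t^q$ with the pure powers $t^p$ and $a(x)t^q$, and then to combine these comparisons with the classical Rellich--Kondrachov and trace theorems on the bounded domain $\Om$ and with the norm--modular relations of Proposition \ref{P2.1}. For $(i)$ I would exploit the pointwise inequality $t^p\le\cH(x,t)$ (valid since $a\ge0$): comparing the corresponding Luxemburg norms shows that every $\lambda$ with $\varrho_\cH(u/\lambda)\le1$ also satisfies $\into|u/\lambda|^p\,dx\le1$, so that $\|u\|_p\le\|u\|_\cH$ and $L^\cH(\Om)\hookrightarrow L^p(\Om)$ continuously. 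Since $\Om$ is bounded, Hölder's inequality gives $L^p(\Om)\hookrightarrow L^r(\Om)$ for every $r\in[1,p]$, and applying the same argument to $\nabla u$ produces the Sobolev version $\soh\hookrightarrow W^{1,r}(\Om)$.

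For $(iv)$ the inclusion $L^\cH(\Om)\hookrightarrow L^q_a(\Om)$ follows analogously from $a(x)t^q\le\cH(x,t)$, which gives $\|u\|_{q,a}\le\|u\|_\cH$. For the reverse inclusion I would use $a\in L^\infty(\Om)$: writing $a_+:=\|a\|_\infty$ and invoking $q>p$ together with Hölder on the bounded domain, one bounds $\varrho_\cH(u/\lambda)\le\lambda^{-p}|\Om|^{1-p/q}\|u\|_q^p+a_+\lambda^{-q}\|u\|_q^q$; by homogeneity of the Luxemburg norm it suffices to treat $\|u\|_q=1$, and then choosing $\lambda$ large enough makes the modular at most $1$, whence $\|u\|_\cH\le C\|u\|_q$ and $L^q(\Om)\hookrightarrow L^\cH(\Om)$ is continuous.

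For $(ii)$ and $(iii)$ I would factor the embeddings through $W^{1,p}(\Om)$. By $(i)$ the inclusion $\soh\hookrightarrow W^{1,p}(\Om)$ is continuous; since $\Om$ is a bounded domain with $C^{1,\alpha}$ boundary, the Rellich--Kondrachov theorem gives the compact embedding $W^{1,p}(\Om)\hookrightarrow L^{\nu_1}(\Om)$ for every $\nu_1<p^*$, and the compact trace theorem gives $W^{1,p}(\Om)\hookrightarrow L^{\nu_2}(\rand)$ for every $\nu_2<p_*$. Composing a continuous map with a compact one yields $(ii)$ and $(iii)$.

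The essential point, and the step I expect to require the most care, is $(v)$. Given a bounded sequence $(u_n)_n\subset\soh$, by $(i)$ it is bounded in $W^{1,p}(\Om)$, so by $(ii)$---and here the hypothesis $q<p^*$ from \eqref{condizione1} is crucial---a subsequence converges to some $u$ strongly in $L^{\nu_1}(\Om)$ for every $\nu_1<p^*$, hence in particular in both $L^p(\Om)$ and $L^q(\Om)$. The delicate issue is that convergence must be upgraded to the Luxemburg norm of $L^\cH(\Om)$ rather than merely in $L^p$ or $L^q$. To this end I would estimate the modular of the difference,
\[
\varrho_\cH(u_n-u)=\into|u_n-u|^p\,dx+\into a(x)|u_n-u|^q\,dx\le\|u_n-u\|_p^p+a_+\|u_n-u\|_q^q,
\]
where once more $a\in L^\infty(\Om)$ serves to replace the weighted term by an ordinary $L^q$ norm. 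Both summands tend to zero, so $\varrho_\cH(u_n-u)\to0$, and Proposition \ref{P2.1}$(v)$ then converts this modular convergence into $\|u_n-u\|_\cH\to0$, which proves the compactness of $\soh\hookrightarrow L^\cH(\Om)$.
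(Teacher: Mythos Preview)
Your argument is correct. Each step is sound: the comparison $t^p\le\cH(x,t)$ gives $(i)$, the two pointwise bounds $a(x)t^q\le\cH(x,t)$ and $\cH(x,t)\le t^p+\|a\|_\infty t^q$ give $(iv)$, factoring through $W^{1,p}(\Om)$ and invoking Rellich--Kondrachov and the compact trace theorem gives $(ii)$--$(iii)$, and for $(v)$ the key observation that $q<p^*$ lets you pass through $L^q(\Om)$, bound the weighted term by $\|a\|_\infty\|u_n-u\|_q^q$, and then convert modular convergence to norm convergence via Proposition~\ref{P2.1}$(v)$.

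The paper, however, does not prove this proposition at all: it is simply quoted from \cite[Proposition~2.1]{EMW}. So your approach differs from the paper's only in that you supply a self-contained proof where the paper delegates to a reference. What your argument buys is independence from that citation and transparency about exactly where the hypothesis $a\in L^\infty(\Om)$ and the constraint $q<p^*$ from \eqref{condizione1} are used (namely in $(iv)$ and $(v)$); what the paper's choice buys is brevity, since these embeddings are by now standard in the double-phase literature.
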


Let us define the operator $L:W^{1,\mathcal H}(\Omega)\to\left(W^{1,\mathcal H}(\Omega)\right)^*$ such that
	\begin{equation}\label{essepiu}
	   	\langle L(u),v\rangle_{\mathcal H}:=\int_\Omega\left(|\nabla u|^{p-2}+a(x)|\nabla u|^{q-2}\right)\nabla u\cdot\nabla v \;dx, 
	\end{equation}
for all $u$, $v\in W^{1,\mathcal H}(\Omega)$.
Here, $\left(W^{1,\mathcal H}(\Omega)\right)^*$ denotes the dual space of $W^{1,\mathcal H}(\Omega)$ and $\langle \cdot \,,\cdot\rangle_{\mathcal H}$ is the related dual pairing. Then we have the following  result, see \cite[Proposition 3.1-(ii)]{LD}.

\begin{proposition}\label{P2.4}
The mapping $L\colon W^{1,\mathcal H}(\Omega)\to\left(W^{1,\mathcal H}(\Omega)\right)^*$ is  of $(S_+)$ type, that is, if $u_j\rightharpoonup u$ in $W^{1,\mathcal H}(\Omega)$ and $\limsup\limits_{j\to\infty}\langle L(u_j)-L(u),u_j-u\rangle\leq0$, then $u_j\to u$ in $W^{1,\mathcal H}(\Omega)$.
\end{proposition}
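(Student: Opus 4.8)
The plan is to exploit the strict monotonicity of the double phase operator together with the classical pointwise (Simon-type) vector inequalities for the maps $\xi\mapsto|\xi|^{r-2}\xi$, and then to promote the resulting modular convergence to norm convergence by means of Propositions~\ref{P2.1} and \ref{P2.2}.

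First I would write $\langle L(u_j)-L(u),u_j-u\rangle_{\mathcal H}=A_j+B_j$, where
\[
A_j:=\into\left(|\nabla u_j|^{p-2}\nabla u_j-|\nabla u|^{p-2}\nabla u\right)\cdot\nabla(u_j-u)\,dx
\]
and
\[
B_j:=\into a(x)\left(|\nabla u_j|^{q-2}\nabla u_j-|\nabla u|^{q-2}\nabla u\right)\cdot\nabla(u_j-u)\,dx.
\]
By the monotonicity of $\xi\mapsto|\xi|^{r-2}\xi$ (with $r=p$ and $r=q$) and since $a\ge0$, both integrands are pointwise nonnegative, so $A_j\ge0$ and $B_j\ge0$. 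Because the hypothesis gives $\limsup_{j\to\infty}(A_j+B_j)\le0$, I conclude that $A_j\to0$ and $B_j\to0$ separately.

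The core of the argument is then to turn this into $\varrho_{\mathcal H}(\nabla(u_j-u))\to0$. When an exponent satisfies $r\ge2$, the inequality $|\xi-\eta|^r\le C_r\left(|\xi|^{r-2}\xi-|\eta|^{r-2}\eta\right)\cdot(\xi-\eta)$ bounds the corresponding piece of the modular directly by $C_r A_j$ (respectively by $C_r B_j$, keeping the weight $a(x)$ in front). When $1<r<2$, instead, I would use $\left(|\xi|^{r-2}\xi-|\eta|^{r-2}\eta\right)\cdot(\xi-\eta)\ge c_r|\xi-\eta|^2(|\xi|+|\eta|)^{r-2}$ together with a H\"older splitting with exponents $2/r$ and $2/(2-r)$, obtaining for the $p$-term
\[
\into|\nabla(u_j-u)|^p\,dx\le C\,A_j^{p/2}\left(\into(|\nabla u_j|+|\nabla u|)^p\,dx\right)^{(2-p)/2},
\]
and analogously for the weighted $q$-term after splitting $a(x)=a(x)^{q/2}a(x)^{(2-q)/2}$. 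Since $u_j\weak u$ in $\soh$ the sequence is bounded, so by Proposition~\ref{P2.1} the trailing factors remain bounded, and $A_j,B_j\to0$ then force $\varrho_{\mathcal H}(\nabla(u_j-u))\to0$.

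Finally, Proposition~\ref{P2.3}$(v)$ provides the compact embedding $\soh\hookrightarrow L^{\mathcal H}(\Omega)$, whence $u_j\to u$ in $L^{\mathcal H}(\Omega)$ and $\varrho_{\mathcal H}(u_j-u)\to0$ by Proposition~\ref{P2.1}$(v)$. Adding the two modular limits yields $\widehat\varrho_{\mathcal H}(u_j-u)\to0$, and Proposition~\ref{P2.2}$(v)$ then gives $u_j\to u$ in $\soh$, as desired. I expect the subquadratic regime $1<r<2$ to be the main obstacle: there the pointwise inequality no longer controls $|\xi-\eta|^r$ directly, so the H\"older argument above must be carried out with care, in particular distributing the weight $a(x)$ correctly in the $q$-term, so that the residual factors stay uniformly bounded.
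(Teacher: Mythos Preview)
Your argument is correct. The paper itself does not give a proof of Proposition~\ref{P2.4}: it simply quotes the result from \cite[Proposition 3.1-(ii)]{LD}. Your proposal supplies precisely the standard proof that underlies that citation: splitting $\langle L(u_j)-L(u),u_j-u\rangle_{\mathcal H}$ into the $p$-part $A_j$ and the weighted $q$-part $B_j$, using the nonnegativity of both to deduce $A_j,B_j\to0$ separately, and then applying the Simon inequalities (with the H\"older trick in the subquadratic regime) to obtain $\varrho_{\mathcal H}(\nabla(u_j-u))\to0$. The final passage through the compact embedding of Proposition~\ref{P2.3}$(v)$ and the modular--norm equivalences of Propositions~\ref{P2.1}$(v)$ and~\ref{P2.2}$(v)$ is exactly the right way to close the argument. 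The handling of the weight in the $q$-term via $a(x)=a(x)^{q/2}a(x)^{(2-q)/2}$ is also correct and ensures the residual factor is $\|\nabla u_j\|_{q,a}^q+\|\nabla u\|_{q,a}^q$-type, hence bounded. In short, there is no gap; what you wrote is a complete and self-contained proof, whereas the paper merely outsources it.
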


Now, we introduce some technical results related to the Kirchhoff coefficient $M$, under the assumptions that $(M_1)$-$(M_2)$ hold true. To this aim, we observe that integrating $(M_1)$ on $(1,t)$, when $t\geq 1$, gives
\begin{align}\label{mepsilon}
\mathscr M(t)\leq \mathscr M(1) t^\theta\qquad\mbox{for all }t\geq1.
\end{align}
Moreover, we set
	\begin{equation}
	\label{phi-H}
	\phi_{\mathcal H}(u):= \into \l( \frac{|u|^p}{p}+ a(x) \frac{|u|^q}{q} \r) dx.
	\end{equation}
Thus, we have the following lemmas.

\begin{lemma}
Let $u\in\soh$ be such that $\normasoh{u}>1$. Then, there exist ${A_1}$, ${A_2}>0$ such that
\begin{equation}\label{stimaA1}
    \varrho_{\cH}(\nabla u) M[\phi_{\cH}(\nabla u)]+ \varrho_{\cH}(u) \ge {A_1}\|u\|_{1, \cH}^p,
\end{equation}
and
\begin{equation}\label{stimaA2}
   M(\|\nabla u\|_p^p)\|\nabla u\|_p^p+M(\|\nabla u\|_{q,a}^q)\|\nabla u\|_{q,a}^q+\varrho_{\mathcal H}(u)
\geq {A_2}\|u\|_{1,\mathcal H}^p.
\end{equation}
\end{lemma}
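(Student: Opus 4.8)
The plan is to treat both estimates with a single device: reduce everything to the complete modular $\roc$ through Proposition \ref{P2.2}, and then split the analysis according to whether the zeroth order contribution $\rof$ or the gradient contribution dominates. The genuine difficulty is that $(M_1)$--$(M_2)$ do not prevent $M$ from being small (indeed from vanishing) near the origin, so one cannot simply bound the Kirchhoff factors below by a positive constant termwise. The role of the hypothesis $\normasoh{u}>1$ is exactly to rule this out in the relevant regime: by Proposition \ref{P2.2}$(ii)$ and $(iv)$ it forces $\roc=\ron+\rof>1$ and $\roc\ge\normasoh{u}^p$, so whenever the gradient part carries a fixed fraction of $\roc$ its modular exceeds a fixed positive threshold, and then $(M_2)$ supplies a uniform lower bound $\kappa$.

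For \eqref{stimaA1} I would set $S:=\roc=\ron+\rof$ and distinguish two cases. If $\rof\ge S/2$, then, since the Kirchhoff term is nonnegative, the left-hand side is at least $\rof\ge S/2\ge\tfrac12\normasoh{u}^p$. If instead $\rof<S/2$, then $\ron>S/2>\tfrac12$; using $p<q$ one has the elementary integrand-wise bound $\phi_{\mathcal{H}}(\nabla u)\ge\tfrac1q\,\ron>\tfrac{1}{2q}$, so $(M_2)$ with $\tau=\tfrac{1}{2q}$ gives $M[\phi_{\mathcal{H}}(\nabla u)]\ge\kappa_0:=\kappa(\tfrac{1}{2q})>0$, whence the left-hand side is at least $\kappa_0\,\ron\ge\tfrac{\kappa_0}{2}S\ge\tfrac{\kappa_0}{2}\normasoh{u}^p$. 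Taking $A_1:=\min\{\tfrac12,\tfrac{\kappa_0}{2}\}$ closes the estimate.

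For \eqref{stimaA2} I would argue in the same spirit, now with the three nonnegative quantities $X:=\|\nabla u\|_p^p$, $Y:=\|\nabla u\|_{q,a}^q$ and $Z:=\rof$, which satisfy $X+Y+Z=\roc=:S>1$. If $Z\ge S/3$ the left-hand side is at least $Z\ge\tfrac13 S$. Otherwise $X+Y>\tfrac23 S$, so $\max\{X,Y\}>\tfrac13 S>\tfrac13$; say $X>\tfrac13$ (the case $Y>\tfrac13$ is identical). Here no factor $q$ is needed, because the argument of $M$ is exactly $X$: by $(M_2)$ with $\tau=\tfrac13$ we get $M(X)\ge\kappa_1:=\kappa(\tfrac13)>0$, hence $M(X)X\ge\kappa_1 X>\tfrac{\kappa_1}{3}S$. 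In either case the left-hand side dominates $\min\{\tfrac13,\tfrac{\kappa_1}{3}\}S\ge\min\{\tfrac13,\tfrac{\kappa_1}{3}\}\normasoh{u}^p$, and one sets $A_2:=\min\{\tfrac13,\tfrac{\kappa_1}{3}\}$.

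The only step requiring care --- and the main obstacle --- is the passage through $(M_2)$: it is essential that the thresholds $\tfrac{1}{2q}$ and $\tfrac13$ are fixed numbers independent of $u$, which is guaranteed precisely because $\normasoh{u}>1$ makes $S>1$ and thus keeps the dominant modular bounded away from zero by an absolute constant. Once this is in place, the corresponding $\kappa$ is uniform over all admissible $u$, so that $A_1$ and $A_2$ are genuine constants rather than quantities depending on $u$.
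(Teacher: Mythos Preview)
Your proof is correct and follows essentially the same approach as the paper: use Proposition~\ref{P2.2} to obtain $\roc\ge\normasoh{u}^p>1$, split according to which portion of the modular dominates, and in the gradient-dominated regime invoke $(M_2)$ at a fixed positive threshold. The only cosmetic difference is that the paper uses absolute cut-offs ($1/2$ for \eqref{stimaA1}, then $1/2$ and $1/4$ in a nested case split for \eqref{stimaA2}), whereas you use relative thresholds $S/2$ and $S/3$; your version of \eqref{stimaA2} is in fact slightly tidier, collapsing the paper's three gradient subcases into a single ``$\max\{X,Y\}>S/3$'' step.
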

\begin{proof}
We first prove \eqref{stimaA1}.
Since $\|u\|_{1, \cH} > 1$, by Proposition \ref{P2.2}-(ii) and (iv) we have that
	\[
	\widehat \varrho_{\cH}(u)\geq\|u\|_{1, \cH}^p>1.
	\]
This in particular implies that
	\begin{equation}
	\label{coerc3}
	\|u\|_{1,\mathcal H}^p\leq
	\begin{cases}
	\varrho_{\mathcal H}(\nabla u)+\varrho_{\mathcal H}(u)
	&\mbox{if $\varrho_{\mathcal H}(\nabla u)\geq1/2$},\\
	\varrho_{\mathcal H}(\nabla u)+\varrho_{\mathcal H}(u)\leq2\varrho_{\mathcal H}( u)
	&\mbox{if $\varrho_{\mathcal H}(\nabla u)<1/2$ and $\varrho_{\mathcal H}(u)\geq1/2$}.
	\end{cases}
	\end{equation}
First suppose that $\varrho_{\cH}(\nabla u) \ge 1/2$. Then it follows that
	\[
	\phi_{\cH}(\nabla u) \ge \frac{1}{q} \varrho_{\cH}(\nabla u) \ge \frac{1}{2q}.
	\]
Therefore we can use hypothesis ($M_2$) with $\displaystyle \tau= \frac{1}{2q}$ to find $\displaystyle \kappa= \kappa \l(\frac{1}{2q}\r)>0$ such that 
	\begin{equation}
	\label{coerc4}
	M(t) \ge \kappa \quad \text{for all } t \ge \frac{1}{2q}.
	\end{equation}
Then equation \eqref{coerc4} and the first line of equation \eqref{coerc3} give
	\begin{equation}
	\label{coerc5}
	\begin{split}
	\varrho_{\cH}(\nabla u) M[\phi_{\cH}(\nabla u)]+ \varrho_{\cH}(u) &\ge \kappa \varrho_{\cH}(\nabla u)+ \varrho_{\cH}(u) \\
	& \ge \min\{1, \kappa\} \widehat \varrho_{\cH}(u)  \\
	& \ge \min\{1, \kappa\} \|u\|_{1, \cH}^p.
	\end{split}
	\end{equation}
Suppose now that $\varrho_{\cH}(\nabla u)< 1/2$ and $\varrho_{\cH}(u) \ge 1/2$. Then the nonnegativity of $M$ and the second line of \eqref{coerc3} give
	\begin{equation}
	\label{coerc7}
	\varrho_{\cH}(\nabla u) M[\phi_{\cH}(\nabla u)]+ \varrho_{\cH}(u) \ge \frac{1}{2} \|u\|_{1, \cH}^p.
	\end{equation}
Combining \eqref{coerc5} and \eqref{coerc7} we get \eqref{stimaA1}.	
	
Now, we prove \eqref{stimaA2}. When $\varrho_{\mathcal H}(\nabla u)<1/2$, by \eqref{coerc3} we obtain
\begin{equation}\label{bassonuovo1}
M(\|\nabla u\|_p^p)\|\nabla u\|_p^p+M(\|\nabla u\|_{q,a}^q)\|\nabla u\|_{q,a}^q+\varrho_{\mathcal H}(u)
\geq\frac{1}{2}\|u\|_{1,\mathcal H}^p.
\end{equation}
On the other hand, if $\varrho_{\mathcal H}(\nabla u)\geq1/2$ we distinguish three situations: either $\|\nabla u\|^p_p\geq1/4$ and $\|\nabla u\|^q_{q,a}\geq1/4$; or $\|\nabla u\|^p_p\geq1/4$ and $\|\nabla u\|^q_{q,a}<1/4$, which yields $2\|\nabla u\|_p^p\geq\|\nabla u\|_p^p+\|\nabla u\|_{q,a}^q$; or $\|\nabla u\|^p_p<1/4$ and $\|\nabla u\|^q_{q,a}\geq1/4$, which yields $2\|\nabla u\|_{q,a}^q\geq\|\nabla u\|_p^p+\|\nabla u\|_{q,a}^q$.
Thus, by $(M_2)$ and Proposition \ref{P2.1}, we get
\begin{equation}\label{bassonuovo2}
M(\|\nabla u\|_p^p)\|\nabla u\|_p^p+M(\|\nabla u\|_{q,a}^q)\|\nabla u\|_{q,a}^q\geq
\begin{cases}
\kappa\varrho_{\mathcal H}(\nabla u),
&\mbox{if $\|\nabla u\|^p_p\geq1/4$ and $\|\nabla u\|^q_{q,a}\geq1/4$},\\
\frac{\kappa}{2}\varrho_{\mathcal H}(\nabla u),
&\mbox{if $\|\nabla u\|^p_p\geq1/4$ and $\|\nabla u\|^q_{q,a}<1/4$},\\
\frac{\kappa}{2}\varrho_{\mathcal H}(\nabla u),
&\mbox{if $\|\nabla u\|^p_p<1/4$ and $\|\nabla u\|^q_{q,a}\geq1/4$},
\end{cases}
\end{equation}
with $\kappa=\kappa(1/4)>0$ given in $(M_2)$ with $\tau=1/4$.
Combining \eqref{bassonuovo1} and \eqref{bassonuovo2} gives \eqref{stimaA2}.
\end{proof}

\begin{lemma}
    Let $u\in\soh$ be such that $\normasoh{u}>1$. Then, there exist ${B_1}$, ${B_2}>0$ such that
\begin{equation}\label{stimaB1}
    \mathscr{M}[\phi_{\cH}(\nabla u)]+ \varrho_{\cH}(u) \leq {B_1}(1+\|u\|_{1, \cH}^q+\|u\|_{1, \cH}^{q\theta})
\end{equation}
and
\begin{equation}\label{stimaB2}
    \mathscr{M}(\nap^p)+\mathscr{M}(\naq^q)+ \varrho_{\cH}(u) \leq {B_2}(1+\|u\|_{1, \cH}^q+\|u\|_{1, \cH}^{q\theta}).
\end{equation}
\end{lemma}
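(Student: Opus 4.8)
The plan is to prove both \eqref{stimaB1} and \eqref{stimaB2} as upper bounds by exploiting two facts about $\mathscr M$: first, since $M\geq 0$, the primitive $\mathscr M$ is nondecreasing; second, the superlinear control \eqref{mepsilon}, namely $\mathscr M(t)\leq \mathscr M(1)\,t^\theta$ for $t\geq 1$. The strategy in each case is to split according to whether the argument of $\mathscr M$ lies in $[0,1)$ or in $[1,\infty)$: on $[0,1)$ monotonicity alone gives $\mathscr M(\cdot)\leq \mathscr M(1)$, while on $[1,\infty)$ we invoke \eqref{mepsilon}. Throughout, the device that converts modulars into powers of the norm is Proposition \ref{P2.2}-(iv), which for $\normasoh{u}>1$ yields $\roc\leq \normasoh{u}^q$, whence both $\ron\leq\normasoh{u}^q$ and $\rof\leq\normasoh{u}^q$.

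For \eqref{stimaB1}, I would first note that since $p<q$,
\[
\phi_{\cH}(\nabla u)\leq \frac1p\,\ron,
\]
so the argument of $\mathscr M$ is controlled by $\ron$. If $\phi_{\cH}(\nabla u)\geq 1$, then \eqref{mepsilon} gives $\mathscr M[\phi_{\cH}(\nabla u)]\leq \mathscr M(1)\,\phi_{\cH}(\nabla u)^\theta\leq p^{-\theta}\mathscr M(1)\,\ron^\theta$, and bounding $\ron\leq\normasoh{u}^q$ produces the term $\normasoh{u}^{q\theta}$. If instead $\phi_{\cH}(\nabla u)<1$, monotonicity gives $\mathscr M[\phi_{\cH}(\nabla u)]\leq \mathscr M(1)$. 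Adding $\rof\leq\normasoh{u}^q$ and collecting constants yields \eqref{stimaB1} with $B_1=\max\{\mathscr M(1),\,p^{-\theta}\mathscr M(1),\,1\}$.

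For \eqref{stimaB2}, I would run the identical dichotomy separately on each of the two terms $\mathscr M(\nap^p)$ and $\mathscr M(\naq^q)$, using now that both $\nap^p\leq\ron$ and $\naq^q\leq\ron$ (each is one summand of $\ron$). Exactly as above, each term is bounded by $\mathscr M(1)\,(1+\normasoh{u}^{q\theta})$, and together with $\rof\leq\normasoh{u}^q$ this gives \eqref{stimaB2} with, say, $B_2=\max\{2\mathscr M(1),1\}$. I do not expect a genuine obstacle here: the only points requiring care are restricting the use of \eqref{mepsilon} to the regime where the argument is at least $1$, and keeping track of where the exponent $q\theta$ is generated, namely by raising a modular already controlled by $\normasoh{u}^q$ to the power $\theta$.
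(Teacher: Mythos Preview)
Your proposal is correct and follows essentially the same approach as the paper: split the argument of $\mathscr M$ into a bounded regime (where monotonicity of $\mathscr M$ gives a constant bound) and a large regime (where \eqref{mepsilon} applies), then control all modulars by $\normasoh{u}^q$ via Proposition~\ref{P2.2}-(iv). The only cosmetic difference is that the paper splits \eqref{stimaB1} at $\varrho_{\cH}(\nabla u)\gtrless q$ (to force $\phi_{\cH}(\nabla u)\geq 1$ in the large case) and bounds the small case by $\max_{t\in[0,q]}\mathscr M(t)$, whereas you split directly at $\phi_{\cH}(\nabla u)\gtrless 1$; your choice is slightly cleaner but equivalent.
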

\begin{proof}
Let us fix $u\in\soh$ such that $\normasoh{u}>1$. We first prove \eqref{stimaB1}.

If $\varrho_{\mathcal H}(\nabla u)\geq q$, then $\phi_\cH(\nabla u)\geq\frac{1}{q}\varrho_\cH(\nabla u)\geq 1$, and so by \eqref{mepsilon} and Proposition \ref{P2.1}, we get
\begin{equation}\label{sopra1}
    \begin{split}
        \mathscr M[\phi_{\mathcal H}(\nabla u)]\leq\displaystyle\frac{\mathscr M(1)}{p^\theta}[\varrho_{\mathcal H}(\nabla u)]^{\theta}\leq
\displaystyle\frac{\mathscr M(1)}{p^\theta}\|\nabla u\|_{\mathcal H}^{q\theta}.
    \end{split}
\end{equation}
If conversely $\varrho_{\mathcal H}(\nabla u)\leq q$, then $\phi_\cH(\nabla u)\leq\varrho_{\mathcal H}(\nabla u)\leq q$. Therefore 
\begin{equation}\label{sopra2}
    \begin{split}
        \mathscr M[\phi_{\mathcal H}(\nabla u)]\leq\displaystyle\mathcal{M},
    \end{split}
\end{equation}
where $\displaystyle\mathcal M=\max_{t\in[0,q]}\mathscr M(t)\in(0,\infty)$ by $(M_2)$ and the continuity of $M$.
Therefore, combining \eqref{sopra1}, \eqref{sopra2} with Proposition \ref{P2.1} we obtain 
\begin{equation*}
    \begin{split}
        \mathscr M[\phi_{\mathcal H}(\nabla u)]+\varrho_{\cH}(u)&\leq
\mathcal{M}+\displaystyle\frac{\mathscr M(1)}{p^\theta}\|\nabla u\|_{\mathcal H}^{q\theta}+\varrho_{\cH}(u)\leq\mathcal{M}+\displaystyle\mathscr M(1)\|u\|_{1,\mathcal H}^{q\theta}+\|u\|_{1,\mathcal H}^{q}.
    \end{split}
\end{equation*}

In order to prove \eqref{stimaB2} we observe that, since $\normasoh{u}>1$, from Proposition \ref{P2.2} we have that $\widehat{\varrho}_\cH(u)\leq\normasoh{u}^q$.
We distinguish four cases. If $\Vert\nabla u\Vert_p\geq 1$ and $\Vert\nabla u\Vert_{q,a}\geq 1$, then 
\begin{equation}\label{sotto1}
    \begin{split}
        \mathscr{M}(\nap^p)+\mathscr{M}(\naq^q)+\rof&\leq\mathscr{M}(1)\nap^{p\theta}+\mathscr{M}(1)\naq^{q\theta}+\rof\\
        &\leq 2\mathscr{M}(1)\roc^\theta+\roc\\
        &\leq \max\{1,2\mathscr{M}(1)\}(\normasoh{u}^{q\theta}+\normasoh{u}^q).
    \end{split}
\end{equation}
If $\nap<1$ and $\naq\geq 1$ it holds that
\begin{equation}\label{sotto2}
    \begin{split}
        \mathscr{M}(\nap^p)+\mathscr{M}(\naq^q)+\rof
        &\leq \widetilde{\mathcal{M}}+\mathscr{M}(1)\naq^{q\theta}+\rof\\
        &\leq \widetilde{\mathcal{M}}+\mathscr{M}(1)\roc^\theta+\roc\\
        &\leq \max\{1,\widetilde{\mathcal{M}},\mathscr{M}(1)\}(1+\normasoh{u}^{q\theta}+\normasoh{u}^q),
    \end{split}
\end{equation}
with  $\displaystyle\widetilde{\mathcal M}=\max_{t\in[0,1]}\mathscr M(t)$.
If $\nap\geq 1$ and $\naq<1$ the estimate is the same as above. Finally, when $\nap<1$ and $\naq<1$, it holds that 
\begin{equation}\label{sotto3}
\mathscr{M}(\nap^p)+\mathscr{M}(\naq^q)+\rof\leq 2\widetilde{\mathcal{M}}+\roc\leq\max\{2\widetilde{\mathcal{M}},1\}(1+\normasoh{u}^q).
\end{equation}
Combining \eqref{sotto1}-\eqref{sotto3} we get \eqref{stimaB2}.
\end{proof}

\medskip

We now recall some basic properties on the spectrum of the negative $p$-Laplacian with Robin boundary condition. We refer to the paper of L$\hat{\text{e}}$ \cite{Le} for further details. The $p$-Laplacian eigenvalue problem with Robin boundary condition is given by
%The multiplicity result for problem \eqref{problem1} depends on the first eigenvalue of the Robin eigenvalue problem of the $p$-Laplacian with $\beta>0$, namely problem
\begin{equation}\label{robin}
	\left\{
	\begin{aligned}
	-\Delta_p u&= \lambda |u|^{p-2} u \quad && \text{in } \Om, \\
	|\nabla u|^{p-2} \nabla u \cdot \nu &= -\beta |u|^{p-2} u && \text{on } \rand,
	\end{aligned}
	\right.
	\end{equation}
with $\beta>0$. 
It is well known that there exists a smallest eigenvalue  $\lambda_{1,p}>0$ of \eqref{robin} which is isolated, simple, and can be variationally characterized by
	\begin{equation}
	\label{var-robin}
	\lambda_{1,p}= \inf_{u \in W^{1,p}(\Om), u \ne 0} \frac{\displaystyle \into |\nabla u|^p dx+ \beta \intor |u|^p d\sigma}{\displaystyle \into |u|^p dx}.
	\end{equation}
Moreover, let $u_{1,p}$ be  the normalized (that is, $\|u_{1,p}\|_p= 1$) positive eigenfunction corresponding to $\lambda_{1,p}$. It is known that $u_{1,p} \in \text{int} \l(C^1(\close)_+\r)$, where
	\[
	\text{int} \l(C^1(\close)_+\r):= \l\{u \in C^1(\close): \, u(x)> 0 \text{ for all } x \in \close\r\}.
	\]

\medskip

We conclude this section with a result which will enable us to obtain the existence of  infinitely many solutions to \eqref{P} and \eqref{P2}.
To this aim, let $X$ be a Banach space. We recall that a functional $E\colon X \to\mathbb R$ satisfies the \emph{Cerami condition} $(C)$ if  every sequence $(u_j)_j \subset X$ such that
	\begin{equation}\label{cerami}
	(E(u_j))_j \mbox{ is bounded and }(1+\|u_j\|_{1,\mathcal H})E'(u_j)\to 0\mbox{ in } X^{*}\mbox{ as } j\rightarrow\infty
	\end{equation}
admits a convergent subsequence in $X$; see for instance \cite{BBF}. We say that $(u_j)_j$ is a Cerami sequence for $E$ if it satisfies \eqref{cerami}.

Let us now suppose that  $X$ is a reflexive and separable Banach space. It is well known that there exist $(e_j)_j \subset X$ and $(e^*_j)_j \subset X^*$ such that
	\begin{equation}
	\label{fountain1}
	X= \overline{\mathrm{span}\{e_j:\, j\in \mathbb{N}\}} \quad \text{as well as} \quad  X^*=\overline{\mathrm{span}\{e_j^*:\, j\in \mathbb{N}\}}
	\end{equation}
and 
	\begin{equation}
	\label{fountain2}
	\langle e_i^*, e_j\rangle=
	\begin{cases}
	  1 & \text{if } i=j, \\
	  0 & \text{if } i\neq j.
	\end{cases}
	\end{equation}
For all  $j\in\mathbb N$ we set
	\begin{equation}\label{3.15}
	X_j:=\mathrm{span}\{e_j\}, \qquad Y_j:=\bigoplus_{i=1}^j X_i, \qquad Z_j:=\bigoplus_{i=j}^{\infty} X_i.
	\end{equation}
We can then state the following  result, given in \cite[Theorem 2.9]{L}, which is a variant of the classical Fountain theorem \cite[Theorem 3.6]{Wil} for functionals that satisfy the Cerami condition instead of the Palais-Smale condition.

\begin{theorem}
\label{thm:fountain}

Let $E \in C^1(X, \RR)$ satisfy the Cerami condition $(C)$ and be such that $E(-u)= E(u)$. Moreover, suppose that for every $j \in \NN$ there exist $\rho_j> \gamma_j> 0$ such that
	\begin{itemize}
	
	\item[(i)] $\displaystyle b_j:= \inf_{u \in Z_j, \|u\|= \gamma_j} E(u) \to \infty $ as $j \to \infty$,
	
	\item[(ii)] $\displaystyle a_j:= \max_{u \in Y_j, \|u\|= \rho_j} E(u) \le 0$.
	
	\end{itemize}

Then, $E$ has a sequence of critical points $(u_j)_j$ such that $E(u_j) \to \infty$.
\end{theorem}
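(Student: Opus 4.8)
The plan is to represent the desired critical values as minimax levels over a class of odd maps, to bound these levels below by the quantities $b_j$ via a Borsuk--Ulam linking argument (so that they diverge), and then to force each level to be critical through a deformation lemma tailored to the Cerami condition and to the evenness of $E$. Concretely, for each $j$ I would fix the finite-dimensional ball $B_j:=\{u\in Y_j:\ \|u\|\le\rho_j\}$ and introduce the admissible class
\[
\Gamma_j:=\{\gamma\in C(B_j,X):\ \gamma\text{ is odd and }\gamma|_{\partial B_j}=\mathrm{id}\},
\]
which is nonempty since $\mathrm{id}\in\Gamma_j$, and set $c_j:=\inf_{\gamma\in\Gamma_j}\max_{u\in B_j}E(\gamma(u))$.

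The first substantive step is the linking lower bound. Writing $X=Y_{j-1}\oplus Z_j$ and letting $P\colon X\to Y_{j-1}$ be the associated projection, for every $\gamma\in\Gamma_j$ the map $u\mapsto\bigl(P\gamma(u),\,\|\gamma(u)\|-\gamma_j\bigr)$ goes from the $j$-dimensional disk $B_j$ into the $j$-dimensional space $Y_{j-1}\times\mathbb{R}$ and is odd on $\partial B_j$; a standard Borsuk--Ulam/degree argument then yields a point $u_0$ at which it vanishes, i.e. $\gamma(u_0)\in Z_j$ with $\|\gamma(u_0)\|=\gamma_j$. Hence $\gamma(B_j)$ meets the sphere $\{u\in Z_j:\ \|u\|=\gamma_j\}$, so that $\max_{B_j}E(\gamma(\cdot))\ge b_j$ for all $\gamma$, giving $c_j\ge b_j$. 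For the matching finiteness I would test with $\gamma=\mathrm{id}$ and use the compactness of $B_j$ together with (ii) on $\partial B_j$, obtaining $c_j\le\max_{B_j}E<\infty$. Thus $b_j\le c_j<\infty$, and by (i) we have $c_j\to\infty$.

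The decisive step is the criticality of $c_j$. For $j$ large we have $b_j>0$, hence $c_j>0\ge\max_{\partial B_j}E$, so the constraint $\gamma|_{\partial B_j}=\mathrm{id}$ is compatible with the level $c_j$. Arguing by contradiction, if $c_j$ were a regular value I would invoke a deformation lemma valid under the Cerami condition $(C)$ and respecting the symmetry $E(-u)=E(u)$: there exist $\epsilon>0$ and an odd homeomorphism $\eta\colon X\to X$ with $\eta(u)=u$ whenever $E(u)\notin[c_j-2\epsilon,c_j+2\epsilon]$ and $\eta(\{E\le c_j+\epsilon\})\subset\{E\le c_j-\epsilon\}$. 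Since $\partial B_j\subset\{E\le 0\}$ lies outside the critical strip, $\eta\circ\gamma\in\Gamma_j$ for any near-optimal $\gamma$, and $\max_{B_j}E(\eta(\gamma(\cdot)))\le c_j-\epsilon$ contradicts the definition of $c_j$. Therefore each such $c_j$ is a critical value, and choosing critical points $u_j$ with $E(u_j)=c_j\ge b_j$ yields $E(u_j)\to\infty$.

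The main obstacle is precisely this deformation lemma under $(C)$ rather than $(PS)$. Because the Cerami condition only controls $(1+\|u\|)E'(u)$, the pseudo-gradient flow must be rescaled by a factor comparable to $(1+\|u\|)$ so that trajectories cross the critical strip in finite parameter time while remaining in a region where the compactness supplied by $(C)$ applies and prevents the flow from escaping; simultaneously the pseudo-gradient field must be built equivariantly (odd) to keep $\eta$ odd and thus preserve admissibility in $\Gamma_j$. Establishing this quantitative, symmetric deformation is the crux of the argument, whereas the linking estimate and the minimax bookkeeping are comparatively routine.
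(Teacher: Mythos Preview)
The paper does not supply a proof of this theorem: it is quoted from \cite{L} (as a Cerami-condition variant of Willem's fountain theorem \cite[Theorem~3.6]{Wil}) and invoked as a black box. Your outline is precisely the architecture of the standard proof behind that citation, so there is nothing to compare beyond noting that you are reconstructing the cited argument.

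There is, however, a real slip in your intersection step. The map $u\mapsto\bigl(P\gamma(u),\,\|\gamma(u)\|-\gamma_j\bigr)$ is \emph{not} odd: since $\gamma$ is odd and $\|\cdot\|$ is even, the second component is even; on $\partial B_j$ the map reduces to $u\mapsto(Pu,\rho_j-\gamma_j)$ with a constant positive second entry, which again fails oddness. Hence Borsuk--Ulam does not apply as stated, and in fact the Brouwer degree of this map at $0$ vanishes (the zeros occur in antipodal pairs with cancelling local indices), so a straight degree argument does not salvage it either. The usual remedy is to pass to the symmetric open set $U=\{u\in B_j:\|\gamma(u)\|<\gamma_j\}$, which contains $0$ and is disjoint from $\partial B_j$; on its relative boundary $\partial U$ one has $\|\gamma(u)\|=\gamma_j$, and now $P\circ\gamma\colon\partial U\to Y_{j-1}$ is genuinely odd. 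Since $\partial U$ bounds a symmetric neighbourhood of $0$ in the $j$-dimensional space $Y_j$, Borsuk's theorem forces this map to vanish somewhere, producing the desired $\gamma(u_0)\in Z_j$ with $\|\gamma(u_0)\|=\gamma_j$. The remainder of your sketch---the equivariant deformation lemma under $(C)$ obtained by rescaling an odd pseudo-gradient by a factor comparable to $1+\|u\|$---is correct and is exactly what underlies \cite{L}.
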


\section{Constant sign solutions}
\label{sec3}

In this section we prove the existence of constant-sign solutions to a class of  \eqref{P} and \eqref{P2} with superlinear nonlinearities. More specifically, we consider
	\begin{equation}
	\label{hp-h}
	\begin{aligned}
	h_1(x,t)&= (\zeta-\vartheta) |t|^{p-2}t- a(x) |t|^{q-2}t- f(x,t) \quad && \text{for a.e. } x \in \Om, \\
	h_2(x,t)&= -\beta |t|^{p-2}t && \text{for a.e. } x \in \rand,
	\end{aligned}
	\end{equation}
for all $t \in \RR$, where $\beta>0$, $\zeta>\vartheta>0$ are  parameters to be further specified, and $f$ is a Carath\'eodory function that satisfies suitable structure conditions stated below. Then problem \eqref{P} can be rewritten as
	\begin{equation}\label{problem1}
	\left\{
	\begin{aligned}
	& -M\left[\displaystyle\int_\Omega\left(\frac{|\nabla u|^p}{p}+a(x)\frac{|\nabla u|^q}{q}\right)dx\right]  \mbox{div}\left(|\nabla u|^{p-2}\nabla u+a(x)|\nabla u|^{q-2}\nabla u\right) \\
& \qquad = (\zeta- \vartheta) |u|^{p-2} u- a(x) |u|^{q-2} u- f(x, u) \quad && \mbox{in } \Omega,\\
	& M\left[\displaystyle\int_\Omega\left(\frac{|\nabla u|^p}{p}+a(x)\frac{|\nabla u|^q}{q}\right)dx\right]\left(|\nabla u|^{p-2}\nabla u+a(x)|\nabla u|^{q-2}\nabla u\right)\cdot \nu = -\beta |u|^{p-2} u && \mbox{on } \partial\Omega.
	\end{aligned}
	\right.
	\end{equation}
We assume that the nonlinearity $f\colon \Om \times \RR \to \RR $ is a  \textit{Carath\'eodory} function such that
		\begin{itemize}
		
		\item[($f_1$)] \textit{$f$ is bounded on bounded subsets of $\Om\times\RR$;}
		
		\item[($f_2$)] \textit{It holds
			\[
			\begin{aligned}
			& \lim_{t \to \pm \infty} \frac{f(x,t)}{ |t|^{q-2} t}= \infty \quad && \text{uniformly for a.e. } x \in \Om; \\
			\end{aligned}
			\]}
		
		\item[($f_3$)] \textit{It holds
			\[
			\begin{aligned}
			& \lim_{t \to 0} \frac{f(x, t)}{|t|^{p-2}t}= 0 \quad && \text{uniformly for a.e. } x \in \Om. \\
			\end{aligned}
			\]}
		
		\end{itemize}
A classical model for $f$ satisfying $(f_1)$-$(f_3)$ is given by $f(x,t):=w(x)|t|^{k-2}t$, where $k>q$ and $w\in\Linf$ with $\inf_\Om w>0$.
We say that $u \in W^{1, \mathcal H}(\Om)$ is a weak solution to \eqref{problem1} if it satisfies
	\[
	\begin{split}
	& M\l[\phi_{\mathcal H}(\nabla u)\r] \into \l(|\nabla u|^{p-2} \nabla u+ a(x) |\nabla u|^{q-2} \nabla u \r) \cdot \nabla \varphi \;dx+ \into \l(\vartheta |u|^{p-2}u+ a(x) |u|^{q-2}u \r)\varphi \;dx \\
	&= \into \l(\zeta |u|^{p-2} u-f(x, u)\r) \varphi \;dx- \beta \intor  |u|^{p-2} u  \varphi \;d\sigma
	\end{split}
	\]
for all $\varphi \in W^{1, \mathcal H}(\Om)$.

Our existence result for problem \eqref{problem1} reads as follows.

\begin{theorem}
\label{thm:constant-sign-robin1}
Let \eqref{condizione1}, $(M_1)$-$(M_2)$, and $(f_1)$-$(f_3)$ hold true. Moreover, assume that $\vartheta > 0$ and
\begin{equation}\label{nuovahp}
    \zeta> \vartheta+ \max\{1,M(0)\}\lambda_{1, p},
\end{equation}
where $\lambda_{1, p}$ is the first eigenvalue of the Robin eigenvalue problem given in \eqref{var-robin}. 
 Then, there exist two nontrivial weak solutions $\widetilde u$, $\underline u \in W^{1, \mathcal H}(\Om) \cap L^{\infty}(\Om)$ to problem \eqref{problem1}, such that $\widetilde u \ge 0 $ and $\underline u \le 0$.
\end{theorem}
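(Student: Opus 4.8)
\emph{Strategy.} The plan is to realize $\widetilde u$ as a global minimizer of a suitably truncated energy functional and then to obtain $\underline u$ by a symmetric argument. For the nonnegative solution I would introduce $E_+\colon \soh \to \RR$,
\[
E_+(u):= \mathscr M[\phi_{\mathcal H}(\nabla u)]+ \frac{\vartheta}{p}\|u\|_p^p+ \frac1q \|u\|_{q,a}^q+ \frac{\beta}{p}\|u\|_{p,\rand}^p- \frac{\zeta}{p}\|u^+\|_p^p+ \into F(x,u^+)\,dx,
\]
with $F(x,t):=\int_0^t f(x,s)\,ds$, truncating \emph{only} the source terms carrying $\zeta$ and $f$ by replacing $u$ with $u^+$. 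This choice serves two purposes: the untruncated terms $\frac{\vartheta}{p}\|u\|_p^p$, $\frac1q\|u\|_{q,a}^q$ and the boundary term will force the sign of the minimizer, while the superlinearity $(f_2)$ makes $F$ grow faster than $|t|^q$ and enter $E_+$ with a favorable sign, which is precisely what yields coercivity here (in contrast with the $(AR)$-type arguments of Section \ref{sec4}).

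\emph{Existence of a minimizer.} First I would check that $E_+\in C^1(\soh,\RR)$, with $E_+'$ given by the weak formulation of \eqref{problem1} after replacing $u$ by $u^+$ in the reaction, the lower order and trace terms being handled via Proposition \ref{P2.3}. Coercivity is the first substantial point: on the region where the gradient modular is large, $(M_2)$ together with the elementary bound $\mathscr M(t)\ge \kappa(t-\tau)$ gives $\mathscr M[\phi_{\mathcal H}(\nabla u)]\gtrsim \ron$, so the principal part controls $\varrho_{\mathcal H}(\nabla u)$; meanwhile $\frac{\vartheta}{p}\|u\|_p^p+\frac1q\|u\|_{q,a}^q$ controls $\varrho_{\mathcal H}(u^-)$, and by $(f_2)$ the quantity $\into F(x,u^+)\,dx-\frac{\zeta}{p}\|u^+\|_p^p$ is bounded below and ultimately dominates, controlling $\varrho_{\mathcal H}(u^+)$ up to a constant. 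Passing from $\widehat\varrho_{\mathcal H}$ to $\normasoh{\cdot}$ through Proposition \ref{P2.2} then gives $E_+(u)\to\infty$ as $\normasoh{u}\to\infty$. Since $u\mapsto\phi_{\mathcal H}(\nabla u)$ is convex and continuous, hence sequentially weakly lower semicontinuous, and $\mathscr M$ is continuous and nondecreasing (because $M\ge 0$), the principal part is sequentially weakly lower semicontinuous; the remaining terms are sequentially weakly continuous by the compact embeddings of Proposition \ref{P2.3}$(ii)$–$(v)$. The direct method then furnishes a global minimizer $\widetilde u\in\soh$.

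\emph{Nontriviality and sign.} The heart of the proof is to show $\widetilde u\neq 0$, and this is where \eqref{nuovahp} is used. I would test $E_+$ at $su_{1,p}$ for small $s>0$, where $u_{1,p}>0$ is the normalized first Robin eigenfunction; since $u_{1,p}>0$ we have $(su_{1,p})^+=su_{1,p}$. Expanding as $s\to 0^+$, using $\mathscr M(t)/t\to M(0)$ and $q>p$ to discard the $\|u_{1,p}\|_{q,a}^q$ contribution, and $(f_3)$ to get $\into F(x,su_{1,p})\,dx=o(s^p)$, one is left with
\[
E_+(su_{1,p})= \frac{s^p}{p}\Big(M(0)\|\nabla u_{1,p}\|_p^p+ \beta\|u_{1,p}\|_{p,\rand}^p+ \vartheta- \zeta\Big)+ o(s^p).
\]
Distinguishing $M(0)\le 1$ and $M(0)>1$, in either case the bracket is bounded above by $\max\{1,M(0)\}\big(\|\nabla u_{1,p}\|_p^p+\beta\|u_{1,p}\|_{p,\rand}^p\big)+\vartheta-\zeta=\max\{1,M(0)\}\lambda_{1,p}+\vartheta-\zeta$, using \eqref{var-robin} and $\|u_{1,p}\|_p=1$; by \eqref{nuovahp} this is strictly negative, so $E_+(su_{1,p})<0=E_+(0)$ for $s$ small and hence $\widetilde u\neq 0$. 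To identify the sign I would test $E_+'(\widetilde u)=0$ with $-\widetilde u^-\in\soh$: the truncated reaction terms drop out, while the principal, the $\vartheta$-, the weighted $q$- and the boundary terms are each nonnegative, whence
\[
0= M[\phi_{\mathcal H}(\nabla\widetilde u)]\varrho_{\mathcal H}(\nabla\widetilde u^-)+ \vartheta\|\widetilde u^-\|_p^p+ \|\widetilde u^-\|_{q,a}^q+ \beta\|\widetilde u^-\|_{p,\rand}^p.
\]
As $\vartheta>0$, this forces $\widetilde u^-=0$, i.e. $\widetilde u\ge 0$; on $\{\widetilde u\ge 0\}$ the truncated and original functionals coincide, so $\widetilde u$ is a nontrivial nonnegative weak solution of \eqref{problem1}.

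\emph{Regularity and the second solution.} The $L^\infty$-bound would follow from a Moser-type iteration adapted to the double phase operator, using $(f_1)$ and the subcritical framework $q<p^*$ from \eqref{condizione1}. The nonpositive solution $\underline u$ is obtained by the mirror construction, truncating with $u^-$ and testing nontriviality with $-su_{1,p}$. I expect the two main obstacles to be the coercivity estimate, where the nonlocal factor $M$ prevents a direct modular comparison and forces the case analysis already underlying \eqref{stimaA2}, and the nontriviality step, whose delicate balance between $M(0)$ and the eigenvalue $\lambda_{1,p}$ is exactly what dictates the sharp threshold \eqref{nuovahp}.
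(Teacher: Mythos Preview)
Your overall strategy---truncate the reaction to the positive cone, minimize, recover the sign by testing with $-\widetilde u^{-}$, and run a symmetric argument for $\underline u$---matches the paper's. The nontriviality computation with $su_{1,p}$ is essentially the same as the paper's (yours is in fact a little cleaner, using $\mathscr M(t)/t\to M(0)$ directly rather than the auxiliary function $h$ in \eqref{acca}).

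There is, however, a genuine gap. Hypotheses $(f_1)$--$(f_3)$ impose \emph{no upper growth bound} on $f$: $(f_1)$ only says $f$ is bounded on bounded sets, and $(f_2)$ is a \emph{lower} bound at infinity. In particular $f$ may be supercritical (the stated model $f(x,t)=w(x)|t|^{k-2}t$ carries no restriction $k<p^{*}$). Consequently your claim that $E_+\in C^1(\soh,\RR)$ is not justified: for general $u\in\soh$ the integral $\into F(x,u^{+})\,dx$ need not be finite, and even where it is, the Nemytskii map $u\mapsto f(\cdot,u^{+})$ need not land in the dual. This also undermines the step where you test $E_+'(\widetilde u)=0$ against $-\widetilde u^{-}$. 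The same missing growth hypothesis blocks the Moser iteration you invoke for the $L^\infty$ bound.

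The paper avoids both issues with one device you omitted: an \emph{upper} truncation. Using $(f_2)$ it fixes a constant $u_0>A$ with $\zeta u_0^{p-1}-f(x,u_0)\le 0$ and replaces $\zeta t^{p-1}-f(x,t)$ and $-\beta t^{p-1}$ by the bounded cut-offs $b^{+}$, $b_\beta^{+}$ of \eqref{cut-off}, frozen at $u_0$ for $t\ge u_0$ and set to $0$ for $t<0$. The resulting functional $J^{+}$ has a bounded reaction, so it is trivially $C^1$ and coercive (no need for your $(f_2)$--based coercivity argument). After minimizing, testing with $(\widetilde u-u_0)^{+}$ and using $\zeta u_0^{p-1}-f(x,u_0)\le 0$ yields $\widetilde u\le u_0$, which simultaneously gives $\widetilde u\in L^\infty(\Omega)$ for free and shows that on $[0,u_0]$ the truncated and original problems coincide. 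If you add this upper truncation to your scheme, the rest of your outline goes through essentially as written.
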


\begin{proof}

First of all we observe that hypothesis $(f_2)$ allows us to find a constant $A= A(\zeta)>1$ such that
	\begin{equation}
	\label{1}
	\begin{aligned}
	& f(x,t)t \ge \zeta |t|^q \quad && \text{for a.e. } x \in \Om \text{ and all } |t| \ge A. \\
	\end{aligned}
	\end{equation}
We start with the existence of a nonnegative solution of \eqref{problem1}. From \eqref{1} we can take a constant function $u_0 \in (A, \infty)$  and use the fact that $p< q$ to achieve
	\begin{equation}
	\label{2}
	\zeta u_0^{p-1}- f(x, u_0) \le 0 \quad \text{for a.e. } x \in \Om.
	\end{equation}
Moreover we  consider the cut-off functions $b^{+}\colon \Om \times \RR \to \RR $ and $b_{\beta}^{+}\colon \rand \times \RR \to \RR$  given by
	\begin{equation}
	\label{cut-off}
	 b^+(x, t)=
	\begin{cases}
	0 & \text{if } t< 0 \\
	\zeta t^{p-1}- f(x, t) & \text{if } 0 \le t < u_0 \\
	\zeta u_0^{p-1}- f(x, u_0) & \text{if } t \ge u_0
	\end{cases}, 
	\qquad 
	b_{\beta}^+(x, t)=
	\begin{cases}
	0 & \text{if } t< 0 \\
	-\beta t^{p-1} & \text{if } 0 \le t < u_0 \\
	-\beta u_0^{p-1} & \text{if } t \ge u_0
	\end{cases},
	\end{equation}
and  set
	\begin{equation}
	\label{integr-funct}
	B^{+}(x, t):= \int_0^t b^{+}(x, s) \;ds \quad \text{as well as} \quad B_{\beta}^{+}(x, t):= \int_0^t b_{\beta}^{+}(x, s) \;ds, \quad \text{and} \quad F(x, t):= \int_0^t f(x, s) \;ds.
	\end{equation}
	It is easy to verify that the functions in \eqref{cut-off} and \eqref{integr-funct} are Carath\'eodory functions.
Moreover we consider the $C^1$-functional $J^{+}\colon W^{1, \mathcal H}(\Om) \to \RR $ defined by
	\begin{equation}
	\label{J}
	J^{+}(u)= \mathscr M\l[\phi_{\mathcal H}(\nabla u)\r]+ \frac{\vartheta}{p} \|u\|_p^p+ \frac{1}{q} \|u\|_{q, a}^q- \into B^{+}(x, u) \;dx- \intor B_{\beta}^{+}(x, u) \;d\sigma.
	\end{equation}
We aim to apply the direct methods of the calculus of variations to $J^+$. To this end, we first show that $J^+$ is  coercive. Indeed, let $u \in W^{1, \mathcal H}(\Om)$ be such that $\|u\|_{1, \Hi}>1$. Using hypothesis $(M_1)$ and taking into account that $p<q$ we have
	\begin{equation*}
	\label{coerc1}
	\mathscr M[\phi_{\mathcal H}(\nabla u)] \ge \frac{1}{\theta}M[\phi_{\mathcal H}(\nabla u)] \phi_{\mathcal H}(\nabla u) \ge \frac{1}{q\theta}  M[\phi_{\mathcal H}(\nabla u)] \varrho_{\mathcal H}(\nabla u).
	\end{equation*}
Furthermore, we see that, thanks to the truncations \eqref{cut-off} and hypothesis $(f_1)$, we have that the last two terms in \eqref{J} are bounded. 
These facts together with \eqref{stimaA1} yield that $J^+$ is coercive.

We then show that $J^+$ is also (sequentially weakly) lower semicontinuous. Indeed, let us take $u\in\soh$ and $(u_j)_j\subset \soh$ such that $u_j\rightharpoonup u$ in $\soh$. 
By Propositions \ref{P2.1}-\ref{P2.3} and \cite[Theorem 4.9]{B} there exists a subsequence, still denoted by $(u_j)_j$, such that, as $j\to\infty$,
	\begin{equation*}\label{convergenzex}
	\begin{gathered}
	u_j\to u\mbox{ in }L^{p}(\Omega)\cap L^q_a(\Omega),\qquad \nabla u_j\rightharpoonup\nabla u\mbox{ in }\left[L^{\mathcal H}(\Omega)\right]^N,\qquad\phi_\mathcal{H}(\nabla u_j)\to\ell,\\
	u_j(x)\to u(x)\mbox{ for a.e. }x\in\Omega,\qquad u_j(x)\to u(x)\mbox{ for $\sigma$-a.e. }x\in\partial\Omega.
	\end{gathered}
	\end{equation*}
We use the fact that $\mathscr{M}$ is increasing and continuous,  $\phi_\mathcal{H}$ is  sequentially weakly lower semicontinuous, and  $B^+$ and $B^+_\beta$ are Carath\'eodory functions bounded from below to apply the Fatou's lemma and achieve
	  \begin{equation*}
    \begin{split}
        J^+(u)&\leq\mathscr{M}\left[\lim_{j\to\infty}\phi_\mathcal{H}(\nabla u_j)\right]+\liminf_{j\to\infty}\left(\frac{\vartheta}{p}\|u_j\|^p_p+\frac{1}{q}\|u_j\|^q_{q,a}\right) \\
        & \qquad \quad -\into\lim_{j\to\infty} B(x,u_j) \; dx- \intor\lim_{j\to\infty} B^+_\beta(x,u_j) \;d\sigma\\
        &\leq\lim_{j\to\infty}\mathscr{M}[\phi_\mathcal{H}(\nabla u_j)]+\liminf_{j\to\infty}\left(\frac{\vartheta}{p}\|u_j\|^p_p+\frac{1}{q}\|u_j\|^q_{q,a}\right)- \liminf_{j\to\infty} \into B(x,u_j)\; dx \\
        & \qquad \quad -\liminf_{j\to\infty}\intor B^+_\beta(x,u_j) \; d\sigma\\
        &\leq\liminf_{j\to\infty}J^+(u_j).
        \end{split}
	\end{equation*}
Therefore, there exists a function $\widetilde u \in W^{1, \mathcal H}(\Om)$ such that
	\begin{equation}
	\label{inf}
	J^+(\widetilde u)= \inf \l\{J^+(u): \, u \in W^{1, \mathcal H}(\Om)\r\}.
	\end{equation}
Let us verify that $\widetilde u$ is not trivial. First of all, thanks to hypothesis $(f_3)$, for all $\eps>0$ there exists $\delta \in (0,1)$ such that
	\begin{equation}
	\label{3}
	F(x, t) \le \frac{\eps}{p} |t|^p \quad \text{for a.e. } x \in \Om \text{ and for all } |t| \le \delta.
	\end{equation}
We further define the function $h\colon [0,\infty)\to[M(0),\infty) $ such that
\begin{equation}\label{acca}
		h(t)=
		\begin{cases}\,M(0)&\text{ if }t=0\\
			\displaystyle\max_{s\in(0,t]} \frac{\mathscr{M}(s)}{s} &\text{ otherwise}
		\end{cases}
		.
	\end{equation}
It is easy to see that $h$ is well-defined and continuous. These facts, together with \eqref{nuovahp}, guarantee that there exists $t_1\in(0,1)$ such that
	\begin{equation}\label{t1}
    \zeta> \vartheta+ \max\{1,h(t_1)\}\lambda_{1, p}.
	\end{equation}
Let $u_{1, p}$ be the normalized eigenfunction, that is $\|u_{1,p}\|_p= 1$, corresponding to $\lambda_{1,p}$. Since $u_{1,p} \in \text{int} \l(C^1(\close)_+\r)$ we can choose $t_2 \in (0,t_1]$ sufficiently small so that $ t u_{1,p} \in [0, \delta]$ for all $t\in(0,t_2]$, which implies that 
	\begin{equation}\label{t2}
    t u_{1,p} \in (0, u_0)\qquad\text{for all }x \in \close\text{ and }t\in(0,t_2].
	\end{equation}
Moreover, let us choose $t_3\in(0,t_2]$ such that 
	\begin{equation}\label{t3}
    \phi_\mathcal{H}(\nabla(tu_{1,p}))\leq t_1\qquad\text{for all $t\in[0,t_3]$}.
	\end{equation}
Taking \eqref{acca} into account, \eqref{t3} implies that
	\[
	\mathscr M[\phi_{\Hi}(\nabla (t u_{1,p}))] \le h(t_1) \phi_{\Hi}(\nabla (t u_{1,p}))
	\]
for every $t\in(0,t_3]$.
Then we use the truncations in \eqref{cut-off}, together with \eqref{3}, \eqref{t2}, \eqref{t3}, and $(f_2)$ to achieve
	\begin{equation}
	\label{4}
	\begin{split}
	J^+({t} u_{1,p})
	&= \mathscr M \l[\phi_{\mathcal H}(\nabla({t} u_{1,p}))\r]+ \frac{\vartheta}{p} \|{t} u_{1,p}\|_p^p+ \frac{1}{q} \|{t} u_{1,p}\|_{q, a}^q \\
	& \qquad - \into B^+(x, {t} u_{1,p}) \; dx- \intor B_{\beta}^+(x, {t} u_{1,p})\; d\sigma \\
	& \le h(t_1) \frac{{t}^p}{p} \|\nabla  u_{1,p} \|_p^p+ h(t_1) \frac{{t}^q}{q} \|\nabla u_{1, p}\|_{q, a}^q+ \vartheta \frac{ {t}^p}{p}+ \frac{{t}^q}{q} \|u_{1,p}\|_{q, a}^q \\
	& \qquad -\zeta \frac{{t}^p}{p} + \into F(x, {t} u_{1,p}) \; dx+ \beta \frac{{t}^p}{p} \|u_{1,p}\|_{p, \rand}^p \\
	& \le \max\l\{1, h(t_1) \r\} \frac{{t}^p}{p} \l(\|\nabla u_{1,p}\|_p^p+ \beta \|u_{1,p}\|_{p, \rand}^p \r)+ h(t_1) \frac{{t}^q}{q} \|\nabla u_{1,p}\|_{q, a}^q \\
	& \qquad + \vartheta \frac{{t}^p}{p}+ \frac{{t}^q}{q} \|u_{1,p}\|_{q, a}^q- \zeta \frac{{t}^p}{p}+ \eps \frac{{t}^p}{p} \\
	& \le {t}^p \l(\frac{\max\l\{1, h(t_1) \r\} \lambda_{1,p}+ \vartheta- \zeta+ \eps}{p} \r)+ {t}^q \l(\frac{h(t_1) \|\nabla u_{1,p}\|_{q, a}^q+ \|u_{1,p}\|_{q, a}^q}{q} \r),
	\end{split}
	\end{equation}
%$t \in (0,1)$ sufficiently small so that $ s u_{1,p} \in [0, \delta]$, which implies that $t u_{1,p} \in [0, u_0)$, for all $x \in \close$.
%Then we use the truncations in \eqref{cut-off} together with \eqref{3} and hypothesis $(f_2)$ to achieve
%	\begin{equation}
%	\label{4}
%	\begin{split}
%	& J^+(t u_{1,p}) \\
%	&= \mathscr M \l[\phi_{\mathcal H}(\nabla(t u_{1,p}))\r]+ \frac{\vartheta}{p} \|t u_{1,p}\|_p^p+ \frac{1}{q} \|t u_{1,p}\|_{q, a}^q \\
%	& \qquad - \into B^+(x, t u_{1,p}) \; dx- \intor B_{\beta}^+(x, t u_{1,p})\; d\sigma \\
%	& \le C_{\mathscr M} \frac{t^p}{p} \|\nabla  u_{1,p} \|_p^p+ C_{\mathscr M} \frac{t^q}{q} \|\nabla u_{1, p}\|_{q, a}^q+ \vartheta \frac{ t^p}{p}+ \frac{t^q}{q} \|u_{1,p}\|_{q, a}^q \\
%	& \qquad -\zeta \frac{t^p}{p} + \into F(x, t u_{1,p}) \; dx+ \beta \frac{t^p}{p} \|u_{1,p}\|_{p, \rand}^p \\
%	& \le \max\l\{1, C_{\mathscr M} \r\} \frac{t^p}{p} \l(\|\nabla u_{1,p}\|_p^p+ \beta \|u_{1,p}\|_{p, \rand}^p \r)+ C_{\mathscr M} \frac{t^q}{q} \|\nabla u_{1,p}\|_{q, a}^q \\
%	& \qquad + \vartheta \frac{t^p}{p}+ \frac{t^q}{q} \|u_{1,p}\|_{q, a}^q- \zeta \frac{t^p}{p}+ \eps \frac{t^p}{p} \\
%	& \le t^p \l(\frac{\max\l\{1, C_{\mathscr M} \r\} \lambda_{1,p}+ \vartheta- \zeta+ \eps}{p} \r)+ t^q \l(\frac{C_{\mathscr M} \|\nabla u_{1,p}\|_{q, a}^q+ \|u_{1,p}\|_{q, a}^q}{q} \r) \\
%	& < 0,
%	\end{split}
%	\end{equation}
%for $t<\!< 1$ sufficiently small, because $p<q$, and if $\eps$ is chosen in such a way that $\eps \in (0, \zeta- \vartheta- \lambda_{1,p}\max\l\{1, C_{\mathscr M}\r\})$. 
for every $t\in (0,t_3]$. Since $p<q$, we can choose $\overline{t}\in(0,t_3]$ such that
\begin{equation}\label{t4}
    -\overline{t}^p \l(\frac{\zeta-\vartheta-\max\l\{1, h(t_1) \r\} \lambda_{1,p}-\eps}{p} \r)+ \overline{t}^q \l(\frac{h(t_1) \|\nabla u_{1,p}\|_{q, a}^q+ \|u_{1,p}\|_{q, a}^q}{q} \r)<0,
\end{equation}
where $\varepsilon:=\frac{1}{2} ({\zeta-\vartheta-\max\l\{1, h(t_1) \r\} \lambda_{1,p}})>0$  thanks to \eqref{t1}.

From \eqref{4} it follows that $J^+(\overline{t} u_{1,p})< 0$, therefore, by \eqref{inf} we have
	\[
	J^+(\widetilde u)\leq J^+(tu_{1,p})<0= J^+(0),
	\]
which implies that $\widetilde u \ne 0 $. Let us now show the bound $\widetilde u \in [0, u_0]$. We observe that \eqref{inf} implies that $\l(J^+\r)'(\widetilde u)= 0$, that is,
	\begin{equation}
	\label{5}
	\begin{split}
	&  M\l[\phi_{\mathcal H}(\nabla \widetilde u)\r] \into \l(|\nabla \widetilde u|^{p-2} \nabla \widetilde u+ a(x) |\nabla \widetilde u|^{q-2} \nabla \widetilde u \r) \cdot \nabla \varphi \; dx  + \into \l(\vartheta |\widetilde u|^{p-2} \widetilde u+ a(x) |\widetilde u|^{q-2} \widetilde u \r) \varphi \; dx \\
	&= \into b^+(x, \widetilde u) \varphi \; dx+ \intor b_{\beta}^+(x, \widetilde u) \varphi\; d\sigma
	\end{split}
	\end{equation}
for all $\varphi \in W^{1, \mathcal H}(\Om)$.  Choosing $\varphi= -\widetilde u^- \in W^{1, \mathcal H}(\Om)$ in \eqref{5} and taking into account the truncations \eqref{cut-off}, we get
	\begin{equation}
	\label{6}
	 M\l[\phi_{\mathcal H}(\nabla \widetilde u)\r] \l(\|\nabla \widetilde u^-\|_p^p+ \|\nabla \widetilde u^-\|_{q, a}^q \r)+ \|\widetilde u^- \|_p^p+ \|\widetilde u^-\|_{q, a}^q= 0.
	\end{equation}
Since all the above terms are nonnegative, the equality in \eqref{6} is satisfied when $\widetilde u^-= 0$, which implies that $\widetilde u \ge 0 $. 

We now choose $\varphi= (\widetilde u- u_0)^+ \in W^{1, \mathcal H}(\Om)$ in \eqref{5} and take \eqref{cut-off} into account once again to achieve
	\begin{equation}
	\label{7}
	\begin{split}
	&  M\l[\phi_{\mathcal H}(\nabla \widetilde u)\r] \into \l(|\nabla \widetilde u|^{p-2} \nabla \widetilde u+ a(x) |\nabla \widetilde u|^{q-2} \nabla \widetilde u \r) \cdot \nabla (\widetilde u- u_0)^+ \; dx \\
	& \qquad + \into \l(\vartheta \widetilde u^{p-1}+ a(x) \widetilde u^{q-1} \r) (\widetilde u- u_0)^+ \; dx \\
	&= \into b^+(x, \widetilde u) (\widetilde u- u_0)^+ \; dx+ \intor b_{\beta}^+(x, \widetilde u) (\widetilde u- u_0)^+ \; d\sigma \\
	&= \into (\zeta u_0^{p-1}- f(x, u_0)) (\widetilde u- u_0)^+ \; dx+ \intor (-\beta u_0^{p-1}) (\widetilde u- u_0)^+ \; d\sigma \\
	& \le 0,
	\end{split}
	\end{equation}
where the last inequality holds by \eqref{2} and since $u_0>0$. On the one hand, we see that
	\begin{equation}\label{8}
	\begin{split}
	&  M\l[\phi_{\mathcal H}(\nabla \widetilde u)\r] \into \l(|\nabla \widetilde u|^{p-2} \nabla \widetilde u+ a(x) |\nabla \widetilde u|^{q-2} \nabla \widetilde u \r) \cdot \nabla (\widetilde u- u_0)^+ \; dx \\
	& =M\l[\phi_{\mathcal H}(\nabla \widetilde u)\r] \int_{\{\widetilde u> u_0\}} |\nabla \widetilde u|^p+  a(x) |\nabla \widetilde u|^q \; dx, \\
	\end{split}
	\end{equation}
while on the other hand, exploiting the fact that $\widetilde u> u_0> 1$, we get
	\begin{equation}\label{9}
	\begin{split}
	0 & \ge \into (\vartheta \widetilde u^{p-1}+ a(x) \widetilde u^{q-1}) (\widetilde u- u_0)^+ dx \\
	&= \int_{\{\widetilde u> u_0\}} \vartheta \widetilde u^{p-1} (\widetilde u- u_0)+ a(x) \widetilde u^{q-1} (\widetilde u- u_0) \; dx \\
	& \ge \int_{\{\widetilde u> u_0\}} \vartheta  (\widetilde u- u_0)^p+ a(x)  (\widetilde u- u_0)^q \; dx.
	\end{split}
	\end{equation}
Gathering \eqref{7}, \eqref{8} and \eqref{9}, we see that
	\[
	\begin{split}
	 M\l[\phi_{\mathcal H}(\nabla \widetilde u)\r] & \int_{\{\widetilde u> u_0\}} |\nabla \widetilde u|^p+ a(x) |\nabla \widetilde u|^q \; dx+  \int_{\{\widetilde u> u_0\}} \vartheta (\widetilde u- u_0)^p a(x) (\widetilde u- u_0)^q \; dx \leq 0,
	\end{split}
	\]
which gives $(\widetilde u- u_0)^+= 0$, and therefore $\widetilde u \le u_0$. This fact in particular implies that $\widetilde u \in L^{\infty}(\Om)$. 
By definition of the truncations \eqref{cut-off}, $\widetilde u$ turns out to be a weak solution to \eqref{problem1}.

In order to show the existence of a nonpositive solution, we first fix the constant function $u_1 \equiv -u_0$ and use equation \eqref{1} and the fact that $p< q$ to achieve
	\[
	\zeta |u_1|^{p-2} u_1- f(x, u_1) \ge 0 \quad \text{for a.e. } x \in \Om.
	\]
Then we consider the cut-off, Carath\'eodory functions $b^-\colon \Om \times \R \to \R $ and $b^-_{\beta}\colon \rand \times \R \to \R$ given by
	\begin{equation}
	\label{b-meno}
	b^-(x, t)=
	\begin{cases}
	\zeta |u_1|^{p-2} u_1- f(x, u_1) & \text{if } t \le u_1 \\
	\zeta |t|^{p-2} t- f(x, t) & \text{if } u_1 < t \le 0 \\
	0 & \text{if } t> 0
	\end{cases}, \qquad
	 b_{\beta}^-(x, t)=
	\begin{cases}
	-\beta |u_1|^{p-2} u_1 & \text{if } t \le u_1 \\
	-\beta |t|^{p-2} t & \text{if } u_1 < t \le 0 \\
	0 & \text{if } t>0
	\end{cases},
	\end{equation}
set 
	\begin{equation}
	\label{b-meno1}
	B^-(x, t):= \int_0^t b^-(x, s) \; ds \quad \text{as well as} \quad B_{\beta}^-(x, t):= \int_0^t b_{\beta}^-(x, s) \; ds,
	\end{equation}
and consider the $C^1$-functional $J^-\colon W^{1, \mathcal H}(\Om) \to \R $ given by
	\[
	J^-(u)= \mathscr M[\phi_{\mathcal H}(\nabla u)]+ \frac{\vartheta}{p} \|u\|_p^p+ \frac{1}{q} \|u\|_{q, a}^q- \into B^-(x, u) \; dx- \intor B_{\beta}^-(x, u) \; d\sigma.
	\]
Reasoning in a similar fashion as before we find a global minimizer $\underline u \in W^{1, \mathcal H}(\Om)$ of $J^-$ such that $\underline u \in [u_1, 0]$. 
By definition of the truncations \eqref{b-meno} we see that $\underline u$ is a nonpositive weak solution of problem \eqref{problem1}.
The proof is complete.
\end{proof}

\medskip

Now, we consider problem \eqref{P2} with $h_1$, $h_2$ as in \eqref{hp-h}. Then we get
	\begin{equation}
	\label{problem1.2}
	\left\{
	\begin{aligned}
	& -M\l(\into |\nabla u|^p dx \r) \Delta_p u- M\l(\into a(x) |\nabla u|^q dx \r) \divergenz \l(a(x) |\nabla u|^{q-2} \nabla u\r) \\
	& \qquad \qquad= (\zeta- \vartheta) |u|^{p-2} u- a(x) |u|^{q-2} u- f(x, u) \qquad && \text{in } \Om, \\
	& \left[M(\Vert\nabla u\Vert^p_p)|\nabla u|^{p-2}\nabla u+M(\Vert\nabla u\Vert^q_{q,a})a(x)|\nabla u|^{q-2}\nabla u\right]\cdot \nu = \beta |u|^{p-2} u \qquad&& \mbox{on } \partial\Omega.
	\end{aligned}
	\right.
	\end{equation}
We say that $u \in W^{1, \cH}(\Om)$ is a weak solution to \eqref{problem1.2} if 
	\[
	\begin{split}
	& M\l(\|\nabla u\|_p^p\r) \into |\nabla u|^{p-2} \nabla u \cdot \nabla \varphi \; dx+ M\l(\|\nabla u\|_{q, a}^q\r) \into a(x) |\nabla u|^{q-2} \nabla u \cdot \nabla \varphi \; dx \\
	& \quad + \into \l(\vartheta |u|^{p-2} u+ a(x) |u|^{q-2} u \r) dx = \into \l(\vartheta |u|^{p-2} u- f(x, u)\r) \varphi \; dx- \beta \intor |u|^{p-2} u \varphi \; d\sigma
	\end{split}
	\]
holds for all $ \varphi \in W^{1, \cH}(\Om)$. The existence result  concerning problem \eqref{problem1.2} reads as follows. 

\begin{theorem}
\label{thm:constant-sign-robin2}
In the same hypotheses of Theorem \ref{thm:constant-sign-robin1}, there exist two nontrivial weak solutions $\widetilde u$, $\underline u \in W^{1, \cH}(\Om) \cap L^{\infty}(\Om)$ to problem \eqref{problem1.2}, such that $ \widetilde u \ge 0 $ and $\underline u \le 0$. 
\end{theorem}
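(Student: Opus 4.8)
The plan is to mimic the proof of Theorem \ref{thm:constant-sign-robin1}, replacing the single nonlocal term $\mathscr{M}[\phi_{\mathcal H}(\nabla u)]$ by the split energy associated with the two separate Kirchhoff coefficients appearing in \eqref{problem1.2}. Keeping the truncations $b^+$, $b^+_\beta$ and their primitives $B^+$, $B^+_\beta$ from \eqref{cut-off}--\eqref{integr-funct}, I would introduce the $C^1$ functional
\[
\widehat{J}^+(u) = \frac{1}{p}\mathscr{M}\!\left(\|\nabla u\|_p^p\right) + \frac{1}{q}\mathscr{M}\!\left(\|\nabla u\|_{q,a}^q\right) + \frac{\vartheta}{p}\|u\|_p^p + \frac{1}{q}\|u\|_{q,a}^q - \into B^+(x,u)\,dx - \intor B^+_\beta(x,u)\,d\sigma,
\]
whose Gateaux derivative reproduces exactly the weak formulation of \eqref{problem1.2} once the truncations are removed via the bounds $0 \le \widetilde u \le u_0$ (indeed $\frac1p\mathscr{M}(\|\nabla u\|_p^p)$ differentiates into $M(\|\nabla u\|_p^p)\into|\nabla u|^{p-2}\nabla u\cdot\nabla\varphi$, and similarly for the weighted $q$-term). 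The same direct-method scheme then applies: show that $\widehat{J}^+$ is coercive and sequentially weakly lower semicontinuous, so a global minimizer $\widetilde u$ exists, and subsequently prove that it is nontrivial and satisfies $\widetilde u \in [0,u_0]$.

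For coercivity I would invoke $(M_1)$ in the form $\mathscr{M}(t) \ge \theta^{-1}tM(t)$ applied separately at $t=\|\nabla u\|_p^p$ and $t=\|\nabla u\|_{q,a}^q$, which, using $p<q$, gives
\[
\frac{1}{p}\mathscr{M}\!\left(\|\nabla u\|_p^p\right) + \frac{1}{q}\mathscr{M}\!\left(\|\nabla u\|_{q,a}^q\right) \ge \frac{1}{q\theta}\left[M\!\left(\|\nabla u\|_p^p\right)\|\nabla u\|_p^p + M\!\left(\|\nabla u\|_{q,a}^q\right)\|\nabla u\|_{q,a}^q\right].
\]
Combining this with $\tfrac{\vartheta}{p}\|u\|_p^p + \tfrac1q\|u\|_{q,a}^q \ge c\,\varrho_{\mathcal H}(u)$ for a suitable $c>0$ and then applying estimate \eqref{stimaA2}, which is precisely the lemma tailored to this split structure, yields $\widehat{J}^+(u) \ge C\|u\|_{1,\mathcal H}^p$ up to terms bounded through $(f_1)$ and the truncations, hence coercivity. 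The sequential weak lower semicontinuity follows verbatim from the argument after \eqref{inf}, using the convergences granted by Propositions \ref{P2.1}--\ref{P2.3}, the continuity and monotonicity of $\mathscr{M}$, the weak lower semicontinuity of $\|\cdot\|_p$ and $\|\cdot\|_{q,a}$, and Fatou's lemma on the bounded-below Carath\'eodory terms.

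The delicate point, exactly as in Theorem \ref{thm:constant-sign-robin1}, is the nontriviality of the minimizer, where condition \eqref{nuovahp} enters through the auxiliary function $h$ of \eqref{acca}. I would again test along the ray $t u_{1,p}$, with $u_{1,p}\in \text{int}(C^1(\close)_+)$ the normalized first Robin eigenfunction, but now I must control the \emph{two} terms $\tfrac1p\mathscr{M}(\|\nabla(tu_{1,p})\|_p^p)$ and $\tfrac1q\mathscr{M}(\|\nabla(tu_{1,p})\|_{q,a}^q)$ simultaneously. Using $\mathscr{M}(s)\le h(t_1)s$ for $0<s\le t_1$, this forces me to choose $t_3\in(0,t_2]$ so small that \emph{both} $\|\nabla(tu_{1,p})\|_p^p\le t_1$ and $\|\nabla(tu_{1,p})\|_{q,a}^q\le t_1$ for all $t\in[0,t_3]$, the natural analogue of \eqref{t3}; I expect this simultaneous smallness requirement to be the only genuinely new technical point relative to Theorem \ref{thm:constant-sign-robin1}. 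With this choice, the chain of inequalities in \eqref{4} reproduces, after invoking $(f_3)$ through \eqref{3} and the truncation values, the leading behavior
\[
\widehat{J}^+(tu_{1,p}) \le t^p\,\frac{\max\{1,h(t_1)\}\lambda_{1,p} + \vartheta - \zeta + \eps}{p} + t^q\,\frac{h(t_1)\|\nabla u_{1,p}\|_{q,a}^q + \|u_{1,p}\|_{q,a}^q}{q},
\]
so that picking $\eps=\tfrac12(\zeta-\vartheta-\max\{1,h(t_1)\}\lambda_{1,p})>0$ by \eqref{nuovahp} and \eqref{t1}, and then $t$ small, makes the strictly negative $t^p$-coefficient dominate the $t^q$-term since $p<q$; hence $\widehat{J}^+(tu_{1,p})<0=\widehat{J}^+(0)$ for some admissible $t$, proving $\widetilde u\neq 0$.

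Finally, the sign and bound step runs exactly as in \eqref{5}--\eqref{9}. Testing the Euler equation of $\widehat{J}^+$ with $\varphi=-\widetilde u^-$ gives $M(\|\nabla\widetilde u\|_p^p)\|\nabla\widetilde u^-\|_p^p + M(\|\nabla\widetilde u\|_{q,a}^q)\|\nabla\widetilde u^-\|_{q,a}^q + \vartheta\|\widetilde u^-\|_p^p + \|\widetilde u^-\|_{q,a}^q = 0$; since every summand is nonnegative and $\vartheta>0$, the term $\vartheta\|\widetilde u^-\|_p^p$ alone forces $\widetilde u^-=0$, i.e. $\widetilde u\ge 0$ (the only change from the first proof is that the two Kirchhoff factors are evaluated at the separate gradient norms, which is immaterial here since $M\ge 0$). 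Testing with $(\widetilde u-u_0)^+$ and using \eqref{2} together with the truncation values yields $(\widetilde u-u_0)^+=0$, so $\widetilde u\in[0,u_0]\subset L^\infty(\Om)$; the truncations then disappear and $\widetilde u$ is a genuine weak solution of \eqref{problem1.2}. The nonpositive solution $\underline u\in[u_1,0]$ is obtained by the symmetric construction based on the cut-offs $b^-$, $b^-_\beta$ from \eqref{b-meno} and the associated functional $J^-$, completing the proof.
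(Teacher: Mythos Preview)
Your proposal is correct and follows essentially the same approach as the paper: the paper defines the same split-energy functional (denoted $\mathcal J^+$), invokes \eqref{stimaA2} for coercivity, and---exactly as you anticipated---replaces \eqref{t3} by the simultaneous smallness condition $\|\nabla(tu_{1,p})\|_p^p,\ \|\nabla(tu_{1,p})\|_{q,a}^q\le t_1$ before concluding the nontriviality, sign, and bound steps verbatim from Theorem \ref{thm:constant-sign-robin1}.
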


\begin{proof}

As before, we start with the nonnegative solution, following the proof of Theorem \ref{thm:constant-sign-robin1} until \eqref{integr-funct}. Then we consider the $C^1$-functional $\mathcal J^+\colon W^{1, \cH}(\Om) \to \R$ given by
	\begin{equation*}
	\label{J1}
	\begin{split}
	\mathcal J^+(u)&= \frac{1}{p} \mathscr M \l(\|\nabla u\|_p^p\r)+ \frac{1}{q} \mathscr M\l(\|\nabla u\|_{q, a}^q\r)+ \frac{\vartheta}{p} \|u\|_p^p+ \frac{1}{q} \|u\|_{q, a}^q-\into B^+(x, u) \; dx- \intor B_{\beta}^+(x, u) \; d\sigma.
	\end{split}
	\end{equation*}
	Arguing as in the previous proof and thanks to \eqref{stimaA2} we have  that $\mathcal J^+$ is  coercive and lower semicontinuous.
Therefore, there exists $\widetilde u \in W^{1, \mathcal H}(\Om)$ such that
	\begin{equation*}
	\label{inf2}
	\mathcal J^+(\widetilde u)= \inf\{\mathcal J^+(u): \, u \in W^{1, \cH}(\Om)\}.
	\end{equation*}
In order to show that  $\widetilde u$ is nontrivial, we let $u_{1,p} \in \text{int} \l(C^1(\close)_+\r)$  be the normalized eigenfunction corresponding to the first eigenvalue $\lambda_{1,p}$ of problem \eqref{robin}, and choose $t_1$, $t_2$ as in \eqref{t1} and \eqref{t2},  respectively. Moreover, we choose $t_3\in(0,t_2]$ such that
	\begin{equation*}\label{t3mod}
	\|\nabla (t u_{1,p}) \|_p^p,\, \|\nabla (t u_{1,p})\|_{q, a}^q\leq t_1\qquad\text{for all $t\in[0,t_3]$},
	\end{equation*}
and finally  $\overline{t}$ and $\eps>0$ in order to satisfy \eqref{t4}. It follows that 
%Then we use hypothesis ($H1$)-($M_1$), equations \eqref{cut-off}--\eqref{integr-funct}, \eqref{3}, and the characterization \eqref{var-robin} to have
%	\[
%	\begin{split}
%	& E^+(t u_{1,p}) \\
%	&= \frac{1}{p} \mathscr M\l(\|\nabla (t u_{1,p}) \|_p^p\r)+ \frac{1}{q} \mathscr M\l(\|\nabla (t u_{1,p})\|_{q, a}^q\r)+ \frac{\vartheta}{p} \|t u_{1,p}\|_p^p+ \frac{1}{q} \|t u_{1,p}\|_{q, a}^q \\
%	& \qquad -\into B^+(x, t u_{1,p}) \; dx- \intor B_{\beta}^+ (x, t u_{1,p}) \; d\sigma \\
%	& \le \frac{t^p}{p} C_{\mathscr M} \|\nabla u_{1,p}\|_p^p+ \frac{t^q}{q} C_{\mathscr M} \|\nabla u_{1,p}\|_{q, a}^q+ \vartheta \frac{t^p}{p}+ \frac{t^q}{q} \|u_{1,p}\|_{q, a}^q \\
%	& \qquad -\zeta \frac{t^p}{p}+ \into F(x, t u_{1,p}) \; dx+ \beta \frac{t^p}{p} \|u_{1,p}\|_{p, \rand}^p \\
%	& \le t^p \l(\frac{\max\{1, C_{\mathscr M}\} \lambda_{1,p}+ \vartheta- \zeta+ \eps}{p} \r)+ t^q \l(\frac{C_{\mathscr M} \|\nabla u_{1,p}\|_{q, a}^q+ \|u_{1,p}\|_{q, a}^q}{q} \r) \\
%	\end{split}
%	\]
%now taking $0< \eps< \zeta- \vartheta- \max\{1, C_{\mathscr M}\} \lambda_{1,p}$ and $t<\!< 1$ small enough, we conclude $E^+(t u_{1,p})<0$.
	\[
	\mathcal J^+(\widetilde u) \le \mathcal J^+(\overline{t} u_{1,p})< 0=\mathcal J^+(0),
	\]
 therefore $\widetilde u \not\equiv 0$. The proof that  $ 0 \le \widetilde u \le u_0$ works as in the previous case, and thus $\widetilde u$ is a weak, nonnegative, and bounded solution to  \eqref{problem1.2}. 

In order to find the nonpositive solution we consider the functional $\mathcal J^-\colon W^{1, \cH}(\Om) \to \R$ defined by
	\[
	\begin{split}
	\mathcal J^-(u)&= \frac{1}{p} \mathscr M \l(\|\nabla u\|_p^p\r)+ \frac{1}{q} \mathscr M\l(\|\nabla u\|_{q, a}^q\r)+ \frac{\vartheta}{p} \|u\|_p^p+ \frac{1}{q} \|u\|_{q, a}^q-\into B^-(x, u) \; dx- \intor B_{\beta}^-(x, u) \; d\sigma,
	\end{split}
	\]
with $B^-$ and $B_{\beta}^-$ given as in \eqref{b-meno}, \eqref{b-meno1}, and reason as in the previous case. This completes the proof of the theorem. 
\end{proof}

We conclude this section pointing out that Theorems \ref{thm:constant-sign-robin1} and \ref{thm:constant-sign-robin2} generalize \cite[Theorem 4.2]{EMW} in a nonlocal Kirchhoff framework. In any case, assumption \eqref{nuovahp} is consistent with the constraint for $\zeta$ requested in \cite{EMW}, when $M\equiv1$.

\section{Infinitely many solutions} \label{sec4}

In this section we prove the existence of infinitely many solutions to \eqref{P} and \eqref{P2} when $h_1$ and $h_2$ are symmetric. Throughout this section and differently from the previous one, $\partial\Om$ is assumed to be only Lipschitz. More precisely, we choose
	\begin{equation}\label{varieh}
	\begin{aligned}
	h_1(x,t)&=f(x,t)- |t|^{p-2} t- a(x) |t|^{q-2} t \quad && \text{for a.e. } x \in \Om,\,\text{for all }t\in\RR, \\
	h_2(x, t)&= g(x,t) && \text{for a.e. } x \in \rand,\,\text{for all }t\in\RR,
	\end{aligned}
	\end{equation}
where $f:\Omega\times\mathbb R\to\mathbb R$ and $g:\partial\Omega\times\mathbb R\to\mathbb R$ are two \textit{Carath\'{e}odory} functions that satisfy the following assumptions:

\vspace{0.1cm}
\begin{enumerate}
\item[$(h_1)$] {\em there exist exponents $r_1\in (p,p^*)$ and $r_2\in (p,p_*)$, and two constants $c_1$, $c_2>0$ such that
			\[
			\begin{aligned}
			& |f(x,t)|\leq c_1\left(1+|t|^{r_1-1}\right) \quad && \text{for a.e. } x \in \Om  \text{ and all }t\in\RR, \\
			& |g(x,t)|\leq c_2\left(1+|t|^{r_2-1}\right) \quad && \text{for a.e. } x \in \rand  \text{ and all }t\in\RR; \\
			\end{aligned}
			\]
}
\item[$(h_2)$]
{\em $F(x,t)\geq0$ and $G(x,t)\geq0$ for a.e. $x\in\Omega$ and $x\in\partial\Omega$, respectively, and all $t\in\RR$,} where
$$F(x,t):=\int_0^tf(x,s)\;ds \quad \text{as well as} \quad G(x,t):=\int_0^tg(x,s)\;ds. $$ 
Moreover it holds that
			\[
			\begin{aligned}
			& \lim_{|t|\to\infty}\frac{F(x,t)}{|t|^{q\theta}}=\infty \quad && \text{uniformly for a.e. } x \in \Om. \\
		%	& \lim_{|t|\to\infty}\frac{G(x,t)}{|t|^{q\theta}}=\infty \quad && \text{uniformly for a.e. } x \in \rand, \\
			\end{aligned}
			\]
\item[$(h_3)$] {\em It holds that
$$\mathcal F(x,t):=\frac{1}{q\theta}f(x,t) t-F(x,t)\geq 0 \quad \text{as well as} \quad \mathcal G(x,t):=\frac{1}{q\theta}g(x,t) t-G(x,t)\geq 0$$
for a.e. $x\in\Om$ and a.e. $x \in \rand$, respectively, and for every $t\in\RR$;}
\item[$(h_4)$]
{\em There exist $t_0>0$, $d_1$, $d_2>0$, $s_1>N/p$ and $s_2>(N-1)/(p-1)$ such that
			\[
			\begin{aligned}
			& F(x,t)^{s_1}\leq d_1|t|^{ps_1}\mathcal F(x,t) \quad && \text{for a.e. } x \in \Om,\,\text{ for all }|t|>t_0, \\
			& G(x,t)^{s_2}\leq d_2|t|^{ps_2}\mathcal G(x,t) \quad && \text{for a.e. } x \in \rand,\,\text{ for all }|t|>t_0, \\
			\end{aligned}
			\]
}
\item[$(h_5)$]
{\em $f(x,-t)=-f(x,t)$ and $g(x,-t)=-g(x,t)$ for a.e. $x\in\Om$ and a.e. $x\in\rand$, respectively, and all }$t\in\mathbb R$.
\end{enumerate}
\vspace{0.1cm}
\begin{remark}
When $\theta<p_*/q$, two simple model functions $f$ and $g$ satisfying $(h_1)$--$(h_5)$ are given by $f(x,t)=w(x)|t|^{r_1-2}t$ and $g(x,t)=z(x)|t|^{r_2-2}t$, where $w\in\Linf$ with $\inf_\Om w>0$ and $z\in L^\infty(\partial\Om)$ with $\inf_{\partial\Om}z>0$, and with parameters $t_0>0$ and
\begin{equation}\label{valori}
\begin{gathered}
s_1\in\left(\frac{N}{p},\frac{q\theta}{q\theta-p}\right),\qquad r_1\in\left(q\theta,\frac{ps_1}{s_1-1}\right),\qquad d_1=\frac{1}{r_1^{s_1}}\cdot\frac{\|w\|_\infty^{s_1}}{\inf_\Om w}\cdot\frac{r_1q\theta}{r_1-q\theta},\\
s_2\in\left(\frac{N-1}{p-1},\frac{q\theta}{q\theta-p}\right),\qquad r_2\in\left(q\theta,\frac{ps_2}{s_2-1}\right), \qquad d_2=\frac{1}{r_2^{s_2}}\cdot\frac{\|z\|_{\infty,\partial\Om}^{s_2}}{\inf_{\partial\Om} z}\cdot\frac{r_2q\theta}{r_2-q\theta}.
\end{gathered}
\end{equation}
We observe that such parameters in \eqref{valori} exist since $\theta<p_*/q$. Moreover, it can be easily seen that this restriction on the choice of $\theta$ is necessary if we want to have models of this form.
\end{remark}
%Other model for $f$ satisfying $(h_1)$--$(h_4)$, but not verifying the \textit{Ambrosetti-Rabinowitz} condition, is given by $f(x,t)=q\theta|t|^{q\theta-2}t\ln(1+t^2)+\frac{|t|^{q\theta}2t}{1+t^2}$ with primitive function $F(x,t)=|t|^{q\theta}\ln(1+t^2)$?}

With the choice \eqref{varieh} we have that problem \eqref{P} can be rewritten as
	\begin{equation}\label{problem2}
	\left\{
	\begin{aligned}
	& -M\left[\phi_{\mathcal H}(\nabla u)\right]  \mbox{div}\left(|\nabla u|^{p-2}\nabla u+a(x)|\nabla u|^{q-2}\nabla u\right) 
	= f(x, u) - |u|^{p-2} u- a(x) |u|^{q-2} u \quad && \mbox{in } \Omega,\\
	& M\left[\phi_{\mathcal H}(\nabla u)\right]\left(|\nabla u|^{p-2}\nabla u+a(x)|\nabla u|^{q-2}\nabla u\right)\cdot \nu = g(x,u) && \mbox{on } \partial\Omega.
	\end{aligned}
	\right.
	\end{equation}
We  say that $u \in W^{1, \mathcal H}(\Om)$ is a weak solution to \eqref{problem2} if it satisfies
	\[
	\begin{split}
	& M\l[\phi_{\mathcal H}(\nabla u)\r] \into \l(|\nabla u|^{p-2} \nabla u+ a(x) |\nabla u|^{q-2} \nabla u \r) \cdot \nabla \varphi \;dx+ \into \l(|u|^{p-2}u+ a(x) |u|^{q-2}u \r)\varphi \;dx \\
	&= \into f(x, u) \varphi \;dx+ \intor  g(x,u)  \varphi \;d\sigma
	\end{split}
	\]
for all $\varphi \in W^{1, \mathcal H}(\Om)$. 
We see that \eqref{problem2} is the Euler-Lagrange equation corresponding to  the functional $I:\soh\to\RR$ of class $C^1$ defined as
$$
I(u):= \mathscr M\l[\phi_{\mathcal H}(\nabla u)\r]+ \frac{1}{p} \|u\|_p^p+ \frac{1}{q} \|u\|_{q,a}^q- \into F(x, u) \;dx- \intor G(x, u) \;d\sigma.
$$
Now, we are ready to state the existence result to \eqref{problem2}.
\begin{theorem}\label{thm:infinitely}
Let \eqref{condizione1}, $(M_1)$-$(M_2)$, and $(h_1)$-$(h_5)$ hold true.
Then, problem \eqref{problem2} has infinitely many weak solutions $(u_j)_j$ with unbounded energy.
\end{theorem}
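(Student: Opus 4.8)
The plan is to apply the Fountain-type Theorem~\ref{thm:fountain} with $X=\soh$ to the energy functional $I$, which is of class $C^1$ and, thanks to $(h_5)$, even: indeed $(h_5)$ forces $F$ and $G$ to be even in the second variable, while all remaining terms of $I$ depend only on $|u|$ and $|\nabla u|$. Since $\soh$ is reflexive and separable, the decomposition \eqref{fountain1}--\eqref{3.15} is available, so it only remains to check the Cerami condition $(C)$ together with the two geometric conditions (i) and (ii). I expect the verification of $(C)$ --- and within it the boundedness of Cerami sequences --- to be the main obstacle, precisely because the Ambrosetti--Rabinowitz condition is replaced by the weaker $(h_3)$--$(h_4)$ and because the nonlocal factor $M[\phi_{\mathcal H}(\nabla u)]$ must be controlled through the estimates \eqref{stimaA1} and \eqref{stimaB1}.

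The starting point for $(C)$ is the algebraic inequality $\mathscr M[\phi_{\mathcal H}(\nabla u)]\ge \frac{1}{q\theta}M[\phi_{\mathcal H}(\nabla u)]\varrho_{\mathcal H}(\nabla u)$, obtained by combining $(M_1)$ with $\phi_{\mathcal H}(\nabla u)\ge\frac1q\varrho_{\mathcal H}(\nabla u)$. Together with $p<q$, $\theta\ge1$ and $(h_3)$ this yields, for every $u\in\soh$,
\[
I(u)-\frac{1}{q\theta}\langle I'(u),u\rangle\ge\into\mathcal F(x,u)\,dx+\intor\mathcal G(x,u)\,d\sigma\ge0 .
\]
Hence along a Cerami sequence $(u_j)_j$ --- for which $(I(u_j))_j$ is bounded and $\langle I'(u_j),u_j\rangle=o(1)$ by \eqref{cerami} --- the quantities $\into\mathcal F(x,u_j)\,dx$ and $\intor\mathcal G(x,u_j)\,d\sigma$ stay bounded. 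I then argue by contradiction: assuming $\normasoh{u_j}\to\infty$, I set $v_j:=u_j/\normasoh{u_j}$, so that up to a subsequence $v_j\weak v$ in $\soh$, with the compact embeddings of Proposition~\ref{P2.3} giving $v_j\to v$ in $L^{r_1}(\Om)$, $L^{r_2}(\rand)$ and $L^{\mathcal H}(\Om)$. If $v\ne0$, then $|u_j|\to\infty$ on $\{v\ne0\}$, and the superlinearity $(h_2)$ with Fatou's lemma forces $\into F(x,u_j)\,dx/\normasoh{u_j}^{q\theta}\to\infty$, contradicting the upper bound $\into F(x,u_j)\,dx\le B_1(1+\normasoh{u_j}^{q}+\normasoh{u_j}^{q\theta})+C$ which follows from $I(u_j)\ge-C$, the nonnegativity of $G$ in $(h_2)$, and \eqref{stimaB1}. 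If instead $v=0$, I use $\langle I'(u_j),u_j\rangle=o(1)$ and \eqref{stimaA1} to get $A_1\normasoh{u_j}^p\le\into f(x,u_j)u_j\,dx+\intor g(x,u_j)u_j\,d\sigma+o(1)$; writing $f(x,t)t=q\theta(\mathcal F+F)$ and using that $\into\mathcal F$, $\intor\mathcal G$ are bounded, I bound $\into F(x,u_j)$ and $\intor G(x,u_j)$ via $(h_4)$ and Hölder's inequality by $C(1+\|u_j\|_{ps_1'}^p+\|u_j\|_{ps_2',\rand}^p)$, where $s_1':=s_1/(s_1-1)$, $s_2':=s_2/(s_2-1)$ and the conditions $s_1>N/p$, $s_2>(N-1)/(p-1)$ guarantee $ps_1'<p^*$ and $ps_2'<p_*$. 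Dividing by $\normasoh{u_j}^p$ and invoking $\|v_j\|_{ps_1'},\|v_j\|_{ps_2',\rand}\to0$ (again Proposition~\ref{P2.3}) produces $A_1\le o(1)$, a contradiction. Thus $(u_j)_j$ is bounded.

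With boundedness in hand, strong convergence follows in the usual way: up to a subsequence $u_j\weak u$, and testing $\langle I'(u_j),u_j-u\rangle=o(1)$ while sending the lower order term and the $f,g$-terms to zero through the compact embeddings and the growth $(h_1)$, one is left with $M[\phi_{\mathcal H}(\nabla u_j)]\langle L(u_j),u_j-u\rangle\to0$. Distinguishing whether the bounded sequence $\phi_{\mathcal H}(\nabla u_j)$ tends (along a subsequence) to a positive limit --- where $(M_2)$ forces a uniform lower bound on $M$, so that $\langle L(u_j),u_j-u\rangle\to0$ and the $(S_+)$-property of Proposition~\ref{P2.4} gives $u_j\to u$ --- or to zero --- where $\nabla u_j\to0$ in $L^{\mathcal H}(\Om)$ directly --- I conclude $u_j\to u$ in $\soh$ in both cases. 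This completes the verification of $(C)$.

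Finally, for the geometry I would verify (ii) on the finite dimensional spaces $Y_j$, where all norms are equivalent: since $(h_2)$ yields $F(x,t)\ge c|t|^{q\theta}-C_c$ with $c$ arbitrarily large, combining this with the growth bound \eqref{stimaB1} gives $I(u)\to-\infty$ as $\normasoh{u}\to\infty$ on $Y_j$, so that $a_j\le0$ for $\rho_j$ large enough. Condition (i) is the classical Fountain estimate: setting $\beta_j:=\sup_{u\in Z_j,\ \normasoh{u}=1}(\|u\|_{r_1}+\|u\|_{r_2,\rand})$, the compactness in Proposition~\ref{P2.3} gives $\beta_j\to0$; estimating $\into F+\intor G$ from above by $(h_1)$ and the positive part of $I$ from below by $c_0\normasoh{u}^p$ (using \eqref{stimaA1} and $(M_1)$ as above), a suitable choice of $\gamma_j\to\infty$ yields $b_j\to\infty$. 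Theorem~\ref{thm:fountain} then provides a sequence of critical points $(u_j)_j$ of $I$ with $I(u_j)\to\infty$, that is, infinitely many weak solutions of \eqref{problem2} with unbounded energy.
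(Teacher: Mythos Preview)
Your proposal is correct and follows essentially the same route as the paper: Fountain theorem applied to $I$, with the Cerami condition established via the contradiction argument on $v_j=u_j/\normasoh{u_j}$, splitting into $v\ne0$ (Fatou plus $(h_2)$ against \eqref{stimaB1}) and $v=0$ ($(h_4)$ plus H\"older with exponents $s_i,s_i'$ against the bounded $\int\mathcal F,\int\mathcal G$), then the $(S_+)$-property of $L$ for strong convergence, and finally the standard Fountain geometry using $\beta_j,\xi_j\to0$. Two small organizational differences are worth noting: in the $v=0$ branch the paper derives the key lower bound $\liminf\big(\int F+\int G\big)/\normasoh{u_j}^{p}\ge A_1/(q\theta)$ from $I(u_j)\le C$ rather than from $\langle I'(u_j),u_j\rangle=o(1)$ and the identity $ft=q\theta(\mathcal F+F)$, and for condition~(ii) the paper argues by contradiction (normalizing on $Y_j$ and repeating the Fatou step) rather than your direct estimate $F(x,t)\ge c|t|^{q\theta}-C_c$; both variants are equivalent in effect. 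For your direct argument in~(ii) it is worth making explicit that $(h_1)$--$(h_2)$ together force $q\theta<r_1<p^*$, so the $L^{q\theta}$-norm is a genuine norm on $Y_j$ and the norm equivalence on the finite-dimensional space applies.
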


Theorem \ref{thm:infinitely} generalizes \cite[Theorem 5.9]{GW1} where problem \eqref{problem2} was considered with $M\equiv1$, namely without Kirchhoff coefficient. Here, the function $f$ satisfies the crucial assumption $(h_4)$, different from the quasi-monotonic assumption \eqref{ipotesi2} assumed in \cite{GW1}. Indeed, \eqref{ipotesi2} can not work for problem \eqref{problem2}, even if $M$ satisfies some monotonic condition. This is due to the fact that $M$ in \eqref{problem2} depends on $\phi_{\mathcal H}(\nabla u)$ given in \eqref{phi-H} and not on the seminorm $\|\nabla u\|_{\mathcal H}$. Also, we do not have a proper equivalence  between $\phi_{\mathcal H}(\nabla u)$ and $\|\nabla u\|_{\mathcal H}$, but only the following relation
$$
\frac{1}{q}\min\{\|\nabla u\|_{\mathcal H}^p,\|\nabla u\|_{\mathcal H}^q\}\leq\frac{1}{q}\varrho_{\mathcal H}(\nabla u)\leq
\phi_{\mathcal H}(\nabla u)
\leq\frac{1}{p}\varrho_{\mathcal H}(\nabla u)\leq\frac{1}{p}\max\{\|\nabla u\|_{\mathcal H}^p,\|\nabla u\|_{\mathcal H}^q\}
$$
implied by Proposition \ref{P2.1}.

We aim to prove Theorem \ref{thm:infinitely} by means of Theorem \ref{thm:fountain}. To this end, we first show that $I$ satisfies the $(C)$ condition and then that assumptions (i) and (ii) of Theorem \ref{thm:fountain} are satisfied. We start with the following preliminary result.

\begin{lemma}\label{lemma4.1}
Any Cerami sequence $(u_j)_j$ of $I$ is bounded in $W^{1,\mathcal H}(\Omega)$.
\end{lemma}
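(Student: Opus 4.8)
The plan is to argue by contradiction: assume that, along a subsequence, the Cerami sequence satisfies $\|u_j\|_{1,\cH}\to\infty$, so that eventually $\|u_j\|_{1,\cH}>1$ and the estimates \eqref{stimaA1} and \eqref{stimaB1} are available. Since the Cerami condition forces $(1+\|u_j\|_{1,\cH})I'(u_j)\to 0$ in the dual, one gets in particular $\langle I'(u_j),u_j\rangle\to 0$. The first step is to extract from this a bound on $\mathcal F$ and $\mathcal G$. Computing
$$
I(u_j)-\frac{1}{q\theta}\langle I'(u_j),u_j\rangle=\Big(\mathscr M[\phi_{\cH}(\nabla u_j)]-\tfrac{1}{q\theta}M[\phi_{\cH}(\nabla u_j)]\varrho_{\cH}(\nabla u_j)\Big)+\Big(\phi_{\cH}(u_j)-\tfrac{1}{q\theta}\varrho_{\cH}(u_j)\Big)+\into\mathcal F(x,u_j)\,dx+\intor\mathcal G(x,u_j)\,d\sigma,
$$
I would observe that the first bracket is nonnegative by $(M_1)$ combined with $\phi_{\cH}(\nabla u_j)\ge \frac1q\varrho_{\cH}(\nabla u_j)$ and $M\ge0$, while the second is nonnegative since $p<q\le q\theta$. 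As the left-hand side is bounded (the energy is bounded and $\langle I'(u_j),u_j\rangle\to0$) and $\mathcal F,\mathcal G\ge0$ by $(h_3)$, I conclude that $\into\mathcal F(x,u_j)\,dx$ and $\intor\mathcal G(x,u_j)\,d\sigma$ stay bounded, say by a constant $C$.

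Next I would normalize, setting $v_j:=u_j/\|u_j\|_{1,\cH}$ so that $\|v_j\|_{1,\cH}=1$, and pass (up to a further subsequence) to a weak limit $v_j\weak v$ in $\soh$, with $v_j\to v$ in $L^{r_1}(\Om)$, in $L^{r_2}(\rand)$ and a.e., by Proposition \ref{P2.3}. I then distinguish two cases. If $v\neq0$, on the set $\{v\neq0\}$ (of positive measure) one has $|u_j(x)|\to\infty$, so $(h_2)$ gives $F(x,u_j)/|u_j|^{q\theta}\to\infty$; writing $F(x,u_j)/\|u_j\|_{1,\cH}^{q\theta}=\big(F(x,u_j)/|u_j|^{q\theta}\big)|v_j|^{q\theta}$ and applying Fatou's lemma (legitimate since $F\ge0$), I obtain $\into F(x,u_j)\,dx/\|u_j\|_{1,\cH}^{q\theta}\to\infty$. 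On the other hand, boundedness of the energy together with $G\ge0$, the inequality $\phi_{\cH}(u_j)\le\varrho_{\cH}(u_j)$, and the upper bound \eqref{stimaB1} give $\into F(x,u_j)\,dx\le B_1(1+\|u_j\|_{1,\cH}^q+\|u_j\|_{1,\cH}^{q\theta})+C$, whence the same ratio stays bounded, a contradiction.

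In the remaining case $v=0$ the superlinearity is no longer directly exploitable, and here the structural hypothesis $(h_4)$ enters. The exponents are chosen precisely so that $ps_1':=ps_1/(s_1-1)<p^*$ and $ps_2':=ps_2/(s_2-1)<p_*$; hence by Proposition \ref{P2.3} the embeddings $\soh\hookrightarrow L^{ps_1'}(\Om)$ and $\soh\hookrightarrow L^{ps_2'}(\rand)$ are compact, so that $v_j\to0$ strongly in these spaces. Splitting $\{|u_j|\le t_0\}$ (where $F$ is bounded by $(h_1)$) from $\{|u_j|>t_0\}$ and using $(h_4)$ in the form $F(x,u_j)\le d_1^{1/s_1}|u_j|^p\,\mathcal F(x,u_j)^{1/s_1}$ followed by H\"older's inequality with exponents $s_1,s_1'$, the boundedness of $\into\mathcal F$ yields
$$
\into F(x,u_j)\,dx\le c+d_1^{1/s_1}C^{1/s_1}\|u_j\|_{ps_1'}^p=c+d_1^{1/s_1}C^{1/s_1}\|u_j\|_{1,\cH}^p\,\|v_j\|_{ps_1'}^p=o\big(\|u_j\|_{1,\cH}^p\big),
$$
and analogously $\intor G(x,u_j)\,d\sigma=o(\|u_j\|_{1,\cH}^p)$. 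Since $f(x,t)t=q\theta(\mathcal F(x,t)+F(x,t))$, together with the bound on $\into\mathcal F$ this gives $\into f(x,u_j)u_j\,dx=o(\|u_j\|_{1,\cH}^p)$ and likewise for the boundary term. Feeding these into $\langle I'(u_j),u_j\rangle\to0$ produces $M[\phi_{\cH}(\nabla u_j)]\varrho_{\cH}(\nabla u_j)+\varrho_{\cH}(u_j)=o(\|u_j\|_{1,\cH}^p)$, which contradicts the lower bound \eqref{stimaA1}.

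I expect the case $v=0$ to be the crux of the argument: the entire role of $(h_4)$ and of the sharp ranges $s_1>N/p$, $s_2>(N-1)/(p-1)$ is to upgrade the mere boundedness of $\into\mathcal F$ and $\intor\mathcal G$ into the strict gain $\into F,\ \intor G=o(\|u_j\|_{1,\cH}^p)$, which is exactly what collides with the coercivity-type lower bound \eqref{stimaA1}. A minor technical point to handle carefully is the passage to subsequences and the verification that all the compact embeddings invoked are indeed available in the Musielak--Orlicz setting, which is guaranteed by Proposition \ref{P2.3}.
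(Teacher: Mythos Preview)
Your argument is correct and follows essentially the same route as the paper's proof: the contradiction scheme, the normalization $v_j=u_j/\|u_j\|_{1,\cH}$, the dichotomy $v\neq 0$ versus $v=0$, the use of $(h_2)$ with Fatou against \eqref{stimaB1} in the first case, and the bound on $\int_\Omega\mathcal F+\int_{\partial\Omega}\mathcal G$ via $I(u_j)-\tfrac{1}{q\theta}\langle I'(u_j),u_j\rangle$ together with the $(h_4)$--H\"older splitting in the second case are all exactly what the paper does. The only cosmetic difference is the endgame when $v=0$: the paper derives $\liminf_j\big[\int_\Omega F/\|u_j\|_{1,\cH}^p+\int_{\partial\Omega} G/\|u_j\|_{1,\cH}^p\big]\ge A_1/(q\theta)$ directly from the energy bound and $(M_1)$ and then contradicts it with $\int F,\int G=o(\|u_j\|_{1,\cH}^p)$, whereas you take the (slightly longer) detour through $\int fu_j,\int gu_j=o(\|u_j\|_{1,\cH}^p)$ and the equation $\langle I'(u_j),u_j\rangle\to 0$ to clash with \eqref{stimaA1}; both reach the same contradiction with the same ingredients.
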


\begin{proof}
Let $(u_j)_j$ be a sequence satisfying \eqref{cerami} with $E=I$.
Hence, there exist $C>0$ and $\varepsilon_j>0$, with $\varepsilon_j\to 0$, such that
	\begin{equation}\label{4.1}
	|\langle I'(u_j),\varphi\rangle|\leq \frac{\varepsilon_j\|\varphi\|_{1,\mathcal H}}{1+\|u_j\|_{1,\mathcal H}}\quad\mbox{for all } \varphi\in W^{1,\mathcal H}(\Omega)\mbox{ and }j\in\mathbb{N}
	\end{equation}
and
	\begin{equation}
	\label{ineq-I}
	|I(u_j)|\leq C\quad\mbox{for all }j\in \mathbb{N}.
	\end{equation}
We claim  that $(u_j)_j$ is bounded in $W^{1,\mathcal H}(\Omega)$.

Arguing by contradiction, we assume that~$\|u_j\|_{1,\mathcal H}\to\infty$ as $j\to\infty$
and, without loss of generality, that $\|u_j\|_{1,\mathcal H}> 1$ for $j$ sufficiently large.
Set~$v_j:=u_j/\|u_j\|_{1,\mathcal H}$. It holds that $\|v_j\|_{1,\mathcal H}=1$, therefore there exists  $v\in W^{1,\mathcal H}(\Omega)$ such that, up to a subsequence,
	\begin{align}\label{4.3}
	v_j(x)\to v(x)\mbox{ for a.e. }x\in\Om,\qquad v_j\to v\mbox{ in }L^{\nu_1}(\Omega)\cap L^{\nu_2}(\partial\Omega)
	\end{align}
for all $\nu_1\in[1,p^*)$ and all $\nu_2\in[1,p_*)$, exploiting the reflexivity of $W^{1,\mathcal H}(\Omega)$ and Proposition \ref{P2.3}-(ii) and (iii).
We aim to show that $v= 0$. To this end, we set $\Omega^*=\{x\in\Omega:\,v(x)\neq0\}$ and assume that $|\Omega^*|>0$. Since $\|u_j\|_{1,\mathcal H}\to\infty$ as $j\to\infty$, then
	\[
	|u_j|=\big\|u_j\big\|_{1,\mathcal H}\cdot|v_j|\to \infty \quad  \mbox{a.e. in }  \Omega^*.
	\]
Taking into account $(h_2)$, we get
	\[
	\begin{aligned}
	&\infty= \lim_{j\to\infty}\frac{F(x,u_j)}{\|u_j\|_{1,\mathcal H}^{q\theta}}=\lim_{j\to\infty}\frac{F(x,u_j)}{|u_j|^{q\theta}}\cdot |v_j|^{q\theta}  \quad \text{for a.e. } x\in\Om^*.
	\end{aligned}
	\]
Then, Fatou's lemma gives
	\begin{align}\label{4.4}
	\lim_{j\to\infty}\int_{\Omega}\frac{F(x,u_j)}{\|u_j\|_{1,\mathcal H}^{ q\theta}}\;dx=\lim_{j\to\infty}\int_{\Omega}\frac{F(x,u_j)|v_j|^{ q\theta}}{|u_j|^{ q\theta}}\;dx=\infty.
	\end{align}
%\simone{non serve che in $h_2$ il limite sia uniforme e questo mi sembra l'unico punto in cui usiamo $h_2$}
On the other hand, we use \eqref{cerami} together with the nonnegativity of $G$ first and then \eqref{stimaB1} to achieve
	\begin{equation*}
	\label{4.5}
	\begin{aligned}
	\int_{\Omega}F(x,u_j)\;dx & \leq \mathscr M[\phi_{\mathcal H}(\nabla u_j)]+\frac{1}{p}\|u_j\|_p^p+\frac{1}{q}\|u_j\|_{q,a}^q+C \\
	& \le {B_1}(1+\|u\|_{1, \cH}^q+\|u\|_{1, \cH}^{q\theta})+C \quad \text{ for all } j\in\mathbb{N}.
	\end{aligned}
	\end{equation*}
Dividing by $\|u_j\|_{1,\mathcal H}^{q\theta}$, passing to the limit superior as $j \to \infty$ and recalling that $\theta\geq 1$, we have
	\begin{equation}\label{differenza1}
    \limsup_{j\to\infty} \int_{\Omega}\frac{F(x,u_j)}{\|u_j\|_{1,\mathcal H}^{q\theta}}\;dx<\infty,
	\end{equation}
which contradicts \eqref{4.4}. 
In conclusion, $\Omega^*$ has zero measure, that is, $v=0$ a.e. in $\Omega$.

Take now $j \in \NN$ large enough so that $\|u_j\|_{1,\mathcal H}>1$. By hypothesis $(M_1)$, inequality \eqref{cerami}, together with \eqref{stimaA1} and the fact that  $\theta\geq 1$, we have
	\[
	\begin{aligned}
	C\geq I(u_j)=
	&\mathscr M[\phi_{\mathcal H}(\nabla u_j)]+\frac{1}{p}\|u_j\|_p^p+\frac{1}{q}\|u_j\|_{q,a}^q
-\int_{\Omega}F(x,u_j)\;dx-\int_{\partial\Omega}G(x,u_j)\;d\sigma\\
	\geq&\frac{1}{\theta}M[\phi_{\mathcal H}(\nabla u_j)]\phi_{\mathcal H}(\nabla u_j)+\frac{1}{q}\varrho_{\mathcal H}(u_j)
-\int_{\Omega}F(x,u_j)\;dx-\int_{\partial\Omega}G(x,u_j)\;d\sigma\\
	\geq&\frac{{A_1}}{q\theta}\|u_j\|_{1,\mathcal H}^p-\int_{\Omega}F(x,u_j)\;dx-\int_{\partial\Omega}G(x,u_j) \;d\sigma.
	\end{aligned}
	\]
Since $\|u_j\|_{1,\mathcal H}\to\infty$ as $j\to\infty$, from the previous inequality we have
	\begin{equation}\label{4.6}
	\liminf_{j\to\infty}\left[\int_{\Omega}\frac{F(x,u_j)}{\|u_j\|_{1,\mathcal H}^p}dx
+\int_{\partial\Omega}\frac{G(x,u_j)}{\|u_j\|_{1,\mathcal H}^p}d\sigma\right]\geq\frac{{A_1}}{q\theta}>0.
	\end{equation}
We aim to show that
	\begin{equation}
	\label{stima-FG}
	\lim_{j\to\infty}\int_\Omega\frac{F(x,u_j)}{\|u_j\|_{1,\mathcal H}^p}dx=
	\lim_{j\to\infty}\int_{\partial\Omega}\frac{G(x,u_j)}{\|u_j\|_{1,\mathcal H}^p}d\sigma=0,
	\end{equation}
which will eventually  contradict \eqref{4.6}. We first observe that, thanks  to hypothesis $(M_1)$ together with \eqref{4.1} and  \eqref{ineq-I}  we have
	\begin{equation}\label{4.7}
	\begin{aligned}
	C\geq& I(u_j)-\frac{1}{q\theta}\langle I' (u_j),u_j\rangle\\
	=&\mathscr M[\phi_{\mathcal H}(\nabla u_j)]-\frac{1}{q\theta}M[\phi_{\mathcal H}(\nabla u_j)]\varrho_{\mathcal H}(\nabla u_j)
+\left(\frac{1}{p}-\frac{1}{q\theta}\right)\|u_j\|_p^p+\left(\frac{1}{q}-\frac{1}{q\theta}\right)\|u_j\|_{q,a}^q\\
	&-\int_{\Omega}\left[F(x,u_j)-\frac{1}{q\theta}f(x,u_j)u_j\right]dx
-\int_{\partial\Omega}\left[G(x,u_j)-\frac{1}{q\theta}g(x,u_j)u_j\right]d\sigma\\
\geq&\int_\Omega\mathcal F(x,u_j)\;dx+\int_{\partial\Omega}\mathcal G(x,u_j)\;d\sigma.
	\end{aligned}
	\end{equation}
For all $a\geq0$ and $b>a$, we now set $\Omega_j(a,b):=\left\{x\in\Omega:\,a\leq|u_j(x)|<b\right\}$. Thanks to $(h_1)$ and \eqref{4.3} it follows that
	\begin{equation}\label{4.8}
	\begin{aligned}
	\int_{\Omega_j(0,t_0)}\frac{F(x,u_j)}{\|u_j\|_{1,\mathcal H}^p}\;dx&\leq 
c_1\int_{\Omega_j(0,t_0)}\left(\frac{|u_j|}{\|u_j\|_{1,\mathcal H}^p}+\frac{|u_j|^{r_1}}{{r_1}\|u_j\|_{1,\mathcal H}^p}\right)dx\\
	&\leq c_1\left(\frac{\|u_j\|_1}{\|u_j\|_{1,\mathcal H}^p}
+\frac{1}{r_1}\int_{\Omega_j(0,t_0)}|u_j|^{r_1-p}|v_j|^p\;dx\right)\\
	&\leq c_1\left(\frac{C}{\|u_j\|_{1,\mathcal H}^{p-1}}+\frac{t_0^{r_1-p}}{r_1}\|v_j\|_p^p\right)\to0\quad\mbox{as }j\to\infty,
	\end{aligned}
	\end{equation}
being $v=0$.
On the other hand, by $(h_4)$, \eqref{4.7}, and H\"older's inequality, we have
	\begin{equation}\label{4.8bis}
	\begin{aligned}
	\int_{\Omega_j(t_0,\infty)}\frac{F(x,u_j)}{\|u_j\|_{1,\mathcal H}^p}\;dx&=
\int_{\Omega_j(t_0,\infty)}\frac{F(x,u_j)}{|u_j|^p}|v_j|^p \;dx\\
	&\leq\left[\int_{\Omega_j(t_0,\infty)}\left(\frac{F(x,u_j)}{|u_j|^p}\right)^{s_1}dx\right]^{1/s_1}
\left(\int_{\Omega_j(t_0,\infty)}|v_j|^{ps_1'} \;dx\right)^{1/s_1'}\\
	&\leq d_1^{1/s_1}\left(\int_{\Omega_j(t_0,\infty)}\mathcal F(x,u_j) \;dx\right)^{1/s_1}
\|v_j\|_{ps_1'}^p\\
	&\leq d_1^{1/s_1}C^{1/s_1}
\|v_j\|_{ps_1'}^p\to0\quad\mbox{as }j\to\infty,
	\end{aligned}
	\end{equation}
%\simone{visto che $h_4$ si usa solo qui, possiamo definire $\mathcal{F}$ con una costante piu piccola, in modo che la stima sia piu debole ma comunque sufficiente (per esempio possiamo prendere $q$ al posto di $q\theta$ o qualsiasi $m$ tale che $m>p$)}
%\simone{Risposta di Alessio: $q\theta$ viene da (4.12), per gestire il coefficiente di Kirchhoff M in particolare. Poi dobbiamo sempre tener conto dei modelli, come "dati" a pagina 13. Sicuri che con q al posto di $q\theta$ i modelli verificano la h4?}
taking once again \eqref{4.3} into account, with $v=0$, and since $ps_1'<p^*$ thanks to $(h_4)$. Combining \eqref{4.8} and \eqref{4.8bis}, we get
	\begin{equation*}\label{finale1}
	\lim_{j\to\infty}\int_\Omega\frac{F(x,u_j)}{\|u_j\|_{1,\mathcal H}^p}dx=0.
	\end{equation*}
Reasoning in a similar way and exploiting the fact that   $ps_2'<p_*$ thanks to hypothesis $(h_4)$, we have
	\begin{equation*}\label{finale2}
	\lim_{j\to\infty}\int_{\partial\Omega}\frac{G(x,u_j)}{\|u_j\|_{1,\mathcal H}^p}d\sigma=0.
	\end{equation*}
Therefore, \eqref{stima-FG} follows, giving the desired contradiction.
This allows us to conclude that $(u_j)_j$ is bounded in $W^{1,\mathcal H}(\Omega)$.
\end{proof}

\begin{lemma}\label{lemma4.2}
The functional $I$ satisfies the $(C)$ condition.
\end{lemma}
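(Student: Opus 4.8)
The plan is to combine the boundedness of Cerami sequences from Lemma \ref{lemma4.1} with the $(S_+)$ property of $L$ stated in Proposition \ref{P2.4}, the only subtlety being that the Kirchhoff coefficient may a priori degenerate. First I would take a Cerami sequence $(u_j)_j$ for $I$; by Lemma \ref{lemma4.1} it is bounded in $\soh$, and since the space is reflexive, up to a subsequence $u_j \weak u$ in $\soh$ for some $u\in\soh$. By Proposition \ref{P2.3}-(ii),(iii),(v) and the continuous inclusion $L^{\cH}(\Om)\hookrightarrow L^q_a(\Om)$ of Proposition \ref{P2.3}-(iv), I may further assume that $u_j\to u$ in $L^{r_1}(\Om)$, in $L^{r_2}(\rand)$, in $L^p(\Om)$ and in $L^q_a(\Om)$, as well as $u_j(x)\to u(x)$ for a.e.\ $x\in\Om$. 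Testing \eqref{4.1} with $\varphi=u_j-u$, whose norm stays bounded, gives $\langle I'(u_j),u_j-u\rangle\to 0$.

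Next I would expand this pairing through the weak formulation of \eqref{problem2}, namely
\begin{equation*}
\begin{split}
\langle I'(u_j), u_j-u\rangle = {}& M[\phi_{\cH}(\nabla u_j)]\,\langle L(u_j), u_j-u\rangle + \into \big(|u_j|^{p-2}u_j + a(x)|u_j|^{q-2}u_j\big)(u_j-u)\,dx \\
& {}- \into f(x,u_j)(u_j-u)\,dx - \intor g(x,u_j)(u_j-u)\,d\sigma,
\end{split}
\end{equation*}
and show that every summand except the first is infinitesimal. By $(h_1)$ the families $f(\cdot,u_j)$ and $g(\cdot,u_j)$ stay bounded in $L^{r_1'}(\Om)$ and $L^{r_2'}(\rand)$ respectively, with $r_1'=r_1/(r_1-1)$ and $r_2'=r_2/(r_2-1)$; together with $u_j\to u$ in $L^{r_1}(\Om)$ and in $L^{r_2}(\rand)$, H\"older's inequality forces the $f$- and $g$-integrals to vanish. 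Similarly, H\"older's inequality (with weight $a$ for the second summand) and $u_j\to u$ in $L^p(\Om)\cap L^q_a(\Om)$ make the remaining integral vanish. I would thereby conclude that $M[\phi_{\cH}(\nabla u_j)]\,\langle L(u_j), u_j-u\rangle\to 0$.

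The heart of the argument, and the step that has no counterpart when $M\equiv 1$, is to pass from this to $\langle L(u_j),u_j-u\rangle\to 0$, since $M[\phi_{\cH}(\nabla u_j)]$ could degenerate to $0$. I would distinguish two cases along the subsequence. If $\liminf_{j\to\infty}\phi_{\cH}(\nabla u_j)=\tau>0$, then $(M_2)$ provides $\kappa=\kappa(\tau/2)>0$ with $M[\phi_{\cH}(\nabla u_j)]\geq\kappa$ for $j$ large, whence $\langle L(u_j),u_j-u\rangle\to 0$; since $u_j\weak u$ yields $\langle L(u),u_j-u\rangle\to 0$, this gives $\limsup_{j\to\infty}\langle L(u_j)-L(u),u_j-u\rangle\le 0$ and Proposition \ref{P2.4} delivers $u_j\to u$ in $\soh$. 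If instead $\liminf_{j\to\infty}\phi_{\cH}(\nabla u_j)=0$, I would pass to a further subsequence with $\phi_{\cH}(\nabla u_j)\to 0$; by the comparability of $\phi_{\cH}$ and $\varrho_{\cH}$ together with Proposition \ref{P2.1}-(v) this forces $\|\nabla u_j\|_{\cH}\to 0$, so $\nabla u_j\to 0$ strongly in $\big[L^{\cH}(\Om)\big]^N$, and uniqueness of the weak limit $\nabla u_j\weak\nabla u$ gives $\nabla u=0$; combined with $u_j\to u$ in $L^{\cH}(\Om)$ from Proposition \ref{P2.3}-(v), this again yields $\normasoh{u_j-u}\to 0$. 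In both cases $u_j\to u$ in $\soh$, so $I$ satisfies the Cerami condition. I expect the degenerate regime $\phi_{\cH}(\nabla u_j)\to 0$ to be the main obstacle, precisely because it must be treated separately through the compact embedding into $L^{\cH}(\Om)$ rather than through the $(S_+)$ machinery.
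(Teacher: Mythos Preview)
Your proposal is correct and follows essentially the same route as the paper: boundedness from Lemma~\ref{lemma4.1}, kill the lower-order and nonlinear terms via the compact embeddings of Proposition~\ref{P2.3}, then split into the degenerate case $\phi_{\cH}(\nabla u_j)\to 0$ (handled directly via Proposition~\ref{P2.1}-(v) and $\soh\hookrightarrow L^{\cH}(\Om)$) and the non-degenerate case (handled via $(M_2)$ and the $(S_+)$ property of Proposition~\ref{P2.4}). The only cosmetic difference is that the paper extracts a subsequence with $\phi_{\cH}(\nabla u_j)\to\ell$ from the outset and then treats $\ell=0$ versus $\ell>0$, whereas you phrase the dichotomy through $\liminf$ and pass to a further subsequence in the degenerate regime; the two organizations are equivalent.
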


\begin{proof}

Let $(u_j)_j\subset W^{1,\mathcal H}(\Omega)$ be a sequence satisfying \eqref{cerami} with $E=I$.
Thanks to Lemma \ref{lemma4.1} we have that $(u_j)_j$ is bounded in $W^{1,\mathcal H}(\Omega)$.
Therefore, taking into account Propositions \ref{P2.1}-\ref{P2.3} and the reflexivity of $W^{1,\mathcal H}(\Omega)$, there exist a subsequence, still denoted by $(u_j)_j$, and $u\in W^{1,\mathcal H}(\Omega)$ such that
	\begin{equation}\label{convergenze}
	\begin{gathered}
	u_j\to u\mbox{ in }L^{\mathcal H}(\Omega),\qquad \nabla u_j\rightharpoonup\nabla u\mbox{ in }\left[L^{\mathcal H}(\Omega)\right]^N, \qquad\phi_{\mathcal H}(\nabla u_j)\rightarrow\ell,\\
	u_j\rightharpoonup u\mbox{ in }\soh,\qquad u_j\to u\mbox{ in }L_a^q(\Omega)\cap L^{\nu_1}(\Omega)\cap L^{\nu_2}(\partial\Omega),
	\end{gathered}
	\end{equation}
as $j\to\infty$, with $\nu_1\in[1,p^*)$ and $\nu_2\in[1,p_*)$. 

We aim to show that such $(u_j)_j$ is strongly convergent in $W^{1,\mathcal H}(\Omega)$. Let us distinguish between two possible situations. We first assume that $\ell=0$. Therefore, since $\phi_{\mathcal H}(v)\geq\varrho_{\mathcal H}(v)/q\geq0$ for all $v\in W^{1,\mathcal H}(\Omega)$, thanks to Proposition \ref{P2.1}-(v) we have $\nabla u_j\to\overline{0}$ in $\left[L^{\mathcal H}(\Omega)\right]^N$. Thus we can conclude that $u_j\to u$ in $W^{1,\mathcal H}(\Omega)$ as $j\to\infty$, with $u$ constant a.e. in $\Omega$.

On the other hand, let us suppose $\ell>0$. Thanks to \eqref{convergenze} and Proposition \ref{P2.4} it suffices to show that
\begin{equation*}
    \limsup_{j\to \infty}\langle L(u_j)-L(u),u_j-u\rangle\leq 0,
\end{equation*}
where $L$ is the functional defined in \eqref{essepiu}.
Taking into account hypothesis $(h_1)$, the boundedness of $(u_j)_j$,  the convergences in \eqref{convergenze}, and applying H\"older's inequality we obtain that, as $j\to\infty$,
	\begin{equation}\label{4.2.1}
	\begin{aligned}
	\left|\int_\Omega f(x,u_j)(u_j-u) \;dx\right|
	&\leq c_1\int_\Omega\left(1+|u_j|^{r_1-1}\right)|u_j-u|\;dx\\
	&\leq c_1\left(\|u_j-u\|_1+\Vert u_j\Vert_{r_1}^{r_1-1}\|u_j-u\|_{r_1}\right)\to 0,
	\end{aligned}
	\end{equation}
as well as
	\begin{equation}\label{4.2.2}
	\begin{aligned}
	\left|\int_{\partial\Omega} g(x,u_j)(u_j-u)\;d\sigma\right|
	&\leq c_2\int_{\partial\Omega}\left(1+|u_j|^{r_2-1}\right)|u_j-u|\;d\sigma\\
	&\leq c_2\left(\|u_j-u\|_{1,\partial\Omega}+\Vert u_j\Vert_{r_2,\partial\Om}^{r_2-1}\|u_j-u\|_{r_2,\partial\Omega}\right)\to 0,
	\end{aligned}
	\end{equation}
similarly, 
	\begin{equation}\label{holder1}
   \left| \int_\Omega |u_j|^{p-2}u_j(u_j-u)\;dx\right|\leq\|u_j\|_p^{p-1}\|u-u_j\|_p\to 0,
	\end{equation}
and finally
	\begin{equation}\label{holder2}
	   \left| \int_\Omega a(x)|u_j|^{q-2}u_j(u_j-u)\;dx\right|\leq\|u_j\|_{q,a}^{q-1}\|u-u_j\|_{q,a}\to 0.
	\end{equation}
 Thus, by means of \eqref{cerami} and \eqref{convergenze}--\eqref{holder2}  we get
	\begin{equation}\label{3.12}
	\begin{aligned}
	o(1)=\langle I'(u_j),u_j-u\rangle=&M\left[\phi_{\mathcal H}(\nabla u_j)\right]\langle L(u_j),u_j-u\rangle\\
	&+\int_\Omega |u_j|^{p-2}u_j(u_j-u)\;dx+\int_\Omega a(x) |u_j|^{q-2}u_j(u_j-u)\;dx\\
	&-\int_\Omega f(x,u_j)(u_j-u)\;dx-\int_{\partial\Omega} g(x,u_j)(u_j-u)\;d\sigma\\
	=&M(\ell)\langle L(u_j),u_j-u\rangle+o(1)\qquad\text{ as }j\to \infty.
	\end{aligned}
	\end{equation}
Moreover, thanks to H\"older's inequality and Proposition \ref{P2.1}, given $\phi\in [L^\cH(\Om)]^N$ such that $\|\phi\|_\cH=1$ it holds that
\begin{equation*}
    \begin{split}
        \left|\int_\Omega\left(|\nabla u|^{p-2}\nabla u+a(x)|\nabla u|^{q-2}\nabla u\right)\cdot \phi\;dx\right|&\leq\into|\nabla u|^{p-1}|\phi|\;dx+\into a(x)^{\frac{q-1}{q}}|\nabla u|^{q-1}a(x)^{\frac{1}{q}}|\phi|\;dx\\
        &\leq\|\nabla u\|_p^{p-1}\|\phi\|_p+\|\nabla u\|_{q,a}^{q-1}\|\phi\|_{q,a}\\
        &\leq\max\{\|\nabla u\|_p^{p-1}, \|\nabla u\|_{q,a}^{q-1}\}\varrho_\cH(\phi)\\
        &=\max\{\|\nabla u\|_p^{p-1},\|\nabla u\|_{q,a}^{q-1}\}.
    \end{split}
\end{equation*}
This implies that the functional
	\[
	P\colon \phi\in\left[L^{\mathcal H}(\Omega)\right]^N\mapsto\int_\Omega\left(|\nabla u|^{p-2}\nabla u+a(x)|\nabla u|^{q-2}\nabla u\right)\cdot \phi\,dx
	\]
is linear and bounded.
Therefore, by \eqref{convergenze} we have
	\begin{equation}\label{3.13}
	\langle L(u),u_j-u\rangle
	=\int_\Omega\left(|\nabla u|^{p-2}\nabla u+a(x)|\nabla u|^{q-2}\nabla u\right) \cdot \nabla (u_j-  u) \; dx\to 0\mbox{ as }j\to\infty.
	\end{equation}
Combining \eqref{3.12}-\eqref{3.13}, using Proposition \ref{P2.4} and taking into account that $M(\ell)>0$ thanks to $(M_2)$,  we conclude that $u_j\to u$ in $W^{1,\mathcal H}(\Omega)$ as $j\to\infty$. This completes the proof.
\end{proof}

We now point out that, since $W^{1, \Hi}(\Om)$ is a reflexive and separable Banach space, there exist two sequences $(e_j)_j \subset W^{1, \Hi}(\Om)$ and $(e^*_j)_j \subset \l(W^{1, \Hi}(\Om)\r)^*$ that satisfy \eqref{fountain1}--\eqref{3.15}.

Then we can state the following lemma, which is strongly inspired by \cite[Lemma 7.1]{LD}. First of all, for all $j \in \NN$ we set
	\begin{equation}
	\label{def-xi}
	\beta_j:=\sup_{u\in Z_j,\,\normasoh{u}=1} \|u\|_{r_1} \quad \text{as well as} \quad \xi_j:=\sup_{u\in Z_j,\,\normasoh{u}=1} \|u\|_{r_2,\partial\Om},
	\end{equation}
where $Z_j$ is defined in \eqref{3.15} and  $r_1$, $r_2$ are chosen as in $(h_1)$.

\begin{lemma}\label{asintotica}
It holds that 
	\[
	\lim_{j\to\infty}\beta_j=\lim_{j\to\infty}\xi_j=0.
	\]
\end{lemma}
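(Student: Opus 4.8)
The plan is to run the classical argument for Fountain-type settings (in the spirit of \cite[Lemma 7.1]{LD}), combining the biorthogonal structure \eqref{fountain1}--\eqref{fountain2} of the basis $(e_j)_j$, $(e_j^*)_j$ with the compactness of the relevant embeddings in Proposition \ref{P2.3}. I treat $(\beta_j)_j$ first; the argument for $(\xi_j)_j$ is identical after replacing $\soh\hookrightarrow L^{r_1}(\Om)$ by the trace embedding $\soh\hookrightarrow L^{r_2}(\rand)$.

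First I would observe that, since $Z_{j+1}\subseteq Z_j$ for all $j$ by \eqref{3.15}, the sequence $(\beta_j)_j$ is nonincreasing and bounded below by $0$; hence it converges to some $\beta\ge 0$, and it only remains to show $\beta=0$. For each $j$ I select a near-maximizer $u_j\in Z_j$ with $\normasoh{u_j}=1$ and $\|u_j\|_{r_1}\ge \beta_j-\tfrac1j$. Since $(u_j)_j$ is bounded in the reflexive space $\soh$, up to passing to a subsequence I may assume $u_j\weak u$ in $\soh$ for some $u\in\soh$.

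The heart of the proof, and the step requiring the most care, is to show that this weak limit satisfies $u=0$. Here I would exploit that, by the definition of $Z_j$ in \eqref{3.15} and the biorthogonality \eqref{fountain2}, for each fixed index $i$ one has $\langle e_i^*,u_j\rangle=0$ whenever $j>i$, because $u_j$ lies in the closed linear span of $\{e_k:\,k\ge j\}$. Passing to the weak limit yields $\langle e_i^*,u\rangle=0$ for every $i\in\NN$, and since $(e_i^*)_i$ spans $(\soh)^*$ by \eqref{fountain1}, this forces $u=0$.

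Finally, because $r_1\in(p,p^*)$, Proposition \ref{P2.3}-(ii) gives the compact embedding $\soh\hookrightarrow L^{r_1}(\Om)$, so the weak convergence $u_j\weak 0$ upgrades to strong convergence $u_j\to 0$ in $L^{r_1}(\Om)$, that is $\|u_j\|_{r_1}\to 0$. Since $(\beta_j)_j$ converges, along the chosen subsequence $\beta_j\to\beta$, and combining with $\beta_j-\tfrac1j\le\|u_j\|_{r_1}$ we obtain $\beta\le 0$, hence $\beta=0$. Running the very same scheme with the compact trace embedding $\soh\hookrightarrow L^{r_2}(\rand)$ of Proposition \ref{P2.3}-(iii), valid since $r_2\in(p,p_*)$, gives $\lim_{j\to\infty}\xi_j=0$, completing the proof.
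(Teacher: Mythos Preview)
Your proof is correct and follows essentially the same route as the paper: pick near-maximizers in $Z_j$, use the biorthogonal system \eqref{fountain1}--\eqref{fountain2} to show the weak limit is zero, and invoke the compact embeddings of Proposition \ref{P2.3}-(ii) and (iii) to pass to strong convergence. The only cosmetic differences are that the paper cites \cite[Lemma 7.1]{LD} directly for $(\beta_j)_j$ and does not make the monotonicity of $(\beta_j)_j$, $(\xi_j)_j$ explicit (your observation that $Z_{j+1}\subseteq Z_j$ cleanly disposes of the subsequence issue).
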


\begin{proof}

The claim for $(\beta_j)_j$ is the content of \cite[Lemma 7.1]{LD}, then we are left to show that $\displaystyle \lim_{j\to\infty}\xi_j=0$. From  \eqref{def-xi}, for all $j\in\NN$ we choose $u_j\in Z_j$ such that $\normasoh{u_j}=1$ and 	
	\begin{equation}
	\label{xi}
	\xi_j\leq\|u_j\|_{r_2,\partial\Om}+\frac{1}{j}.
	\end{equation}
 Since $(u_j)_j$ is bounded in the reflexive space $\soh$, thanks to Proposition \ref{P2.3}-(iii)  there exists $u\in\soh$ such that 
	\begin{equation}\label{dueconv}
u_j\rightharpoonup u\mbox{ in }\soh,\qquad u_j\to u\mbox{ in }L^{r_2}(\partial\Om).
	\end{equation}
Moreover,  fix $k\in\NN$. From \eqref{fountain2} it follows that   $\langle e^*_k,u_j\rangle=0$, for all $j\in\NN$ big enough. Then we have 
 	\[
	\langle e^*_k,u\rangle=\lim_{j\to\infty}\langle e^*_k,u_j\rangle=0 \quad\mbox{for all }k\in\NN.
	\]
This implies that $u=0$ in $\soh$. Then, by \eqref{dueconv} it holds that $u_j\to0$ as $j\to\infty$ in $L^{r_2}(\partial\Om)$. Therefore, from \eqref{xi} we have the conclusion. 
\end{proof}

%\begin{lemma}\label{stimaconh3}

%For any $k_1>0$, there exists $k_2\geq 0$ such that 
%	\begin{equation}\label{ambrostima}
%    F(x,t)\geq k_1|t|^\sigma-k_2 \qquad \text{for a.e. }x\in\Om \text{ and all }t\in\RR.
%	\end{equation}
%\end{lemma}
%\begin{proof}
%Let us fix $k_1>0$. Since, by construction, $F(x,0)=0$ and, moreover, $F(x,t)=F(x,-t)$ thanks to $(h_5)$, it suffices to prove \eqref{ambrostima} for $t>0$. We further point out that, since the limit in $(h_2)$ is uniform, it holds that 
%    \begin{equation}\label{h6}
 %       \text{there exist }t_1\geq t_0\text{ such that }F(x,t_1)\geq k_1t_1^{q\theta}\text{ for a.e. }x\in\Om.
 %   \end{equation}
%We consider two cases.
%If $t>t_1$,  integrating $(h_3)$ in the interval %$(t_1,t)$ and using \eqref{h6} give
%	\[
%	F(x,t)\geq\frac{F(x,t_1)}{t_1^{q\theta}}t^\sigma\geq k_1t^{q\theta}\qquad\text{for a.e. }x\in\Om.
%	\]
%We now suppose that $t\in(0,t_1]$ and use $(h_1)$ to achieve  
%	\[
%	F(x,t) \geq -c_1 \l(t+ \frac{t^{r_1}}{r_1}\r) \ge -c_1 \l(t_1+\frac{t_1^{r_1}}{r_1}\r)  \qquad\text{for a.e. }x\in\Om.
%	\]	
%Therefore, if we set $\displaystyle k_2:=c_1\l(t_1+\frac{t_1^{r_1}}{r_1}\r)+k_1t_1^{q\theta}$, the proof is complete.
%Therefore, recalling that $F\geq 0$, if we set $\displaystyle k_2:=k_1t_1^{q\theta}$, the proof is complete.
%\end{proof}

%\begin{rem}
%We point out that the proof of Lemma \ref{stimaconh3} is the only circumstance where we need the limit in $(h_2)$ to be uniform. Therefore, we could weaken our set of hypotheses by simply requiring $(h_2)$ to hold (not necessarily uniformly), and assuming \eqref{h6} instead. 
%\end{rem}

We are now ready to prove Theorem \ref{thm:infinitely}.

\begin{proof}[Proof of Theorem \ref{thm:infinitely}]

Thanks to hypothesis $(h_5)$ and  Lemma \ref{lemma4.2} we have that $I$ is an even functional and satisfies  the $(C)$ condition.
Thus we are left to verify that conditions (i) and (ii) of Theorem \ref{thm:fountain} hold true.
We start with condition (i).
By $(h_1)$ we have that
	\begin{equation}\label{3.1n}
	F(x,t)\leq c_1\left(|t|+\frac{|t|^{r_1}}{r_1}\right) \quad\mbox{for a.e. } x\in\Omega %\mbox{ and all }t\in\mathbb R,
	\end{equation}
as well as
	\begin{equation}\label{3.1nbis}
	G(x,t)\leq c_2\left(|t|+\frac{|t|^{r_2}}{r_2}\right) \quad \mbox{for a.e. }x\in\partial\Omega, %\mbox{ and all }t\in\mathbb R,
	\end{equation}
for all $t \in \RR$. Thus, for all $u\in Z_j$ with $\normasoh{u}>1$, thanks to \eqref{stimaA1}, hypothesis $(M_1)$, \eqref{def-xi}, \eqref{3.1n}, \eqref{3.1nbis}, and the H\"older's inequality,  we have
	\begin{equation}\label{3.19}
	\begin{aligned}
		I(u)&\geq\frac{1}{q\theta}\l(M[\phi_{\mathcal H}(\nabla u)]\phi_{\mathcal H}(\nabla u)+\varrho_\mathcal{H}(u)\r)-c_1\left(\|u\|_{1}+\frac{\|u\|_{r_1}^{r_1}}{r_1}\right)+c_2\left(\Vert u\Vert_{1,\partial\Om}+\frac{\Vert u\Vert_{r_2,\partial\Om}^{r_2}}{r_2}\right)\\
	&\geq\frac{{A_1}}{q\theta}\normasoh{u}^p
-c_1\left(|\Omega|^{(r_1-1)/r_1}\|u\|_{r_1}+\frac{\|u\|_{r_1}^{r_1}}{r_1}\right)+c_2\left(\sigma(\Om)^{(r_2-1)/r_2}\Vert u\Vert_{r_2,\partial\Om}+\frac{\Vert u\Vert_{r_2,\partial\Om}^{r_2}}{r_2}\right)\\
	&\geq {C}\left(\normasoh{u}^p-\beta_j\normasoh{u}-\beta_j^{r_1}\normasoh{u}^{r_1}-\xi_j\normasoh{u}-\xi_j^{r_2}\normasoh{u}^{r_2}\right).
	\end{aligned}
	\end{equation}
	for a suitable $C>0$.
	
Set now $r:=\max\{r_1,r_2\}>p$ and $\eta_j:=\max\{\beta_j,\xi_j\}$, with $\beta_j$ and $\xi_j$ given in \eqref{def-xi}. By Lemma \ref{asintotica}, we have that $\eta_j<1$ if $j\in\mathbb N$ is sufficiently large. Hence, by \eqref{3.19}, for all $u\in Z_j$ with $\normasoh{u}>1$ and $j\in\mathbb N$ big enough, we get
	\begin{equation}\label{3.19bis}
	\begin{aligned}
	I(u)\geq{C}(1-4\eta_j\normasoh{u}^{r-p})\normasoh{u}^p.
	\end{aligned}
	\end{equation}
Let us choose
	\[
	\gamma_j:=\left(\frac{1}{8\eta_j}\right)^\frac{1}{r-p}.
	\]
Then $\gamma_j\to\infty$ as $j\to\infty$, since $\eta_j\to 0$ as $j\to\infty$ thanks to Lemma \ref{asintotica} and the fact that $r>p$.
Inequality \eqref{3.19bis} yields that, for all $u\in Z_j$ with $\normasoh{u}=\gamma_j$, 
	\[
	I(u)\geq\frac{\widetilde{C}}{2}\gamma_j^p\to\infty \quad \mbox{as }j\to\infty,
	\]
which gives the validity of condition (i).

In order to prove condition (ii) we argue by contradiction. Therefore we assume that there exists $j>0$ such that for each $n\in\mathbb{N}\setminus\{0\}$ we can find a function $u_n\in Y_j$ with $\|u_n\|_{1,\mathcal{H}}>n$ and $I(u_n)>0$. Let us define, for each $n$, 
$v_n:=u_n/\|u_n\|_{1,\mathcal{H}}$. Of course, $\|v_n\|_{1,\mathcal{H}}=1$. Since $(Y_j,\|\cdot\|_{1,\mathcal{H}})$ is a finite dimensional Banach space, it follows that there exists $v\in Y_j$ such that, up to a subsequence, 
$$\|v_n-v\|_{1,\mathcal{H}}\to 0$$
as $n \to \infty$. Since also $\|v\|_{1,\mathcal{H}}=1$,  setting $A:=\{x\in\Om\,:\,v(x)\neq 0\}$, it follows that $|A|\neq 0$. 
Therefore, arguing as in the proof of Lemma \ref{lemma4.1}, thanks to $(h_2)$ we conclude that 
\begin{equation}\label{contradictionfine}
    \lim_{n\to\infty}\int_A\frac{F(x,u_n(x))}{\|u_n\|_{1,\mathcal{H}}^{q\theta}}\;dx=+\infty.
\end{equation}
On the other hand, as $I(u_n)>0$, and thanks to the nonnegativity of $G$ and \eqref{stimaB1}, we have that
\begin{equation}\label{stimainfinito2}
    \begin{split}
        \int_AF(x,u_n(x))dx&\leq\int_\Om F(x,u_n(x))dx\leq\int_\Om F(x,u_n(x))dx+I(u_n)\\
        &=\mathscr M\l[\phi_{\mathcal H}(\nabla u_n)\r]+ \frac{1}{p} \|u_n\|_p^p+ \frac{1}{q} \|u_n\|_{q,a}^q- \intor G(x, u_n) \;d\sigma\\
        &\leq \mathscr M\l[\phi_{\mathcal H}(\nabla u_n)\r]+ \frac{1}{p} \|u_n\|_p^p+ \frac{1}{q} \|u_n\|_{q,a}^q\\
        &\leq B_1(1+\|u_n\|_{1, \cH}^q+\|u_n\|_{1, \cH}^{q\theta}).
    \end{split}
\end{equation}
Dividing both sides by $\|u_n\|^{q\theta}_{1,\mathcal{H}}$ and passing to the limit, a contradiction with \eqref{contradictionfine} follows.

%In order to prove condition (ii) let us fix $j\in\mathbb N$. Since $Y_j$ has a finite basis, the norms are topologically equivalent in $Y_j$. Hence, there exists $c(j)>0$ such that
%	\begin{equation}\label{3.17}
%	c(j)\normasoh{u}^\sigma\leq \|u\|_{q\theta}^{q\theta} \quad  \text{for all } u\in Y_j,
%	\end{equation}
%where $\sigma$ is taken as in $(h_3)$. 

%While, by $(h_2)$, $(h_3)$ and $(h_5)$, there exist %$k_1>0$ and $k_2\geq 0$ such that 
%	\begin{equation}\label{ambrostima}
 %   F(x,t)\geq k_1|t|^\sigma-k_2 \qquad \text{for a.e. }x\in\Om \text{ and all }t\in\RR.
%	\end{equation}

%Therefore, taking into account \eqref{stimaB1}, \eqref{3.17}, \eqref{ambrostima}, Lemma \ref{stimaconh3} and the non-negativity of $G$, for all $u\in Y_j$ with $\normasoh{u}>1$ we have 
%	\begin{equation} \label{stimainfinito2}
%    \begin{split}
 %       I(u)&\leq\mathscr{M}[\phi_\cH(u)] +\rof-\into F(x,u)\;dx\\
%       & \leq {B_1}(1+\normasoh{u}^q+\normasoh{u}^{q\theta})-k_1\Vert u\Vert_{q\theta}^{q\theta}+k_2|\Om|\\
 %       & \leq {B_1}+2B_1\normasoh{u}^{q\theta}-k_1c(j)\normasoh{u}^{q\theta}+k_2|\Om|,
 %      \end{split}
%	\end{equation}
%which yields condition (ii) with $k_1>2B_1/c(j)$ and $\rho_j>\max\{1,\gamma_j\}$ sufficiently large.

Thus, we can apply Theorem \ref{thm:fountain}  to  obtain a sequence of critical points of $I$ with unbounded energy. The proof is thus complete. 
\end{proof}

\medskip

We now aim to show  existence of infinitely many solutions to \eqref{P2}, 
where $h_1$ and $h_2$ are chosen as in \eqref{varieh}, that is, 
	\begin{equation}\label{problem3}
	\left\{
	\begin{aligned}
	& -M\left(\|\nabla u\|_p^p\right)\Delta_p u\!-\!M\left(\|\nabla u\|_{q,a}^q\right)\mbox{div}\left(a(x)|\nabla u|^{q-2}\nabla u\right)
	= f(x, u)\\
	&\qquad\qquad\qquad\qquad\qquad\qquad\qquad\qquad\qquad\qquad\qquad\qquad\quad- |u|^{p-2} u- a(x) |u|^{q-2} u \quad && \mbox{in } \Omega,\\
	& \left[M\left(\|\nabla u\|_p^p\right)|\nabla u|^{p-2}\nabla u+M\left(\|\nabla u\|_{q,a}^q\right)a(x)|\nabla u|^{q-2}\nabla u\right]\cdot \nu = g(x,u) && \mbox{on } \partial\Omega.
	\end{aligned}
	\right.
	\end{equation}
We say that a function $u\in W^{1,\mathcal H}(\Omega)$  is a weak solution of \eqref{problem3} if
\begin{equation*}
    \begin{split}
   M(\|\nabla u\|_p^p)&\int_\Omega |\nabla u|^{p-2}\nabla u\cdot\nabla\varphi \;dx+
M(\|\nabla u\|_{q,a}^q)\int_\Omega a(x)|\nabla u|^{q-2}\nabla u\cdot\nabla\varphi \;dx
+ \into \l(|u|^{p-2}u+ a(x) |u|^{q-2}u \r)\varphi \;dx\\
	&= \into f(x, u) \varphi \;dx+ \intor  g(x,u)  \varphi \;d\sigma,
\end{split}
\end{equation*}
is satisfied for all $\varphi\in W^{1,\mathcal H}(\Omega)$. In this case, \eqref{problem3} is the Euler-Lagrange equation associated with the energy functional $\mathcal I:W^{1,\mathcal H}(\Omega)\to\mathbb R$ given by
$$\mathcal I(u):=\frac{1}{p}\mathscr M(\|\nabla u\|_p^p)+\frac{1}{q}\mathscr M(\|\nabla u\|_{q,a}^q)
+\frac{1}{p}\|u\|_p^p+\frac{1}{q}\|u\|_{q,a}^q-\int_\Omega F(x,u)\;dx-\int_{\partial\Omega} G(x,u)\;d\sigma.
$$
Our multiplicity result for \eqref{problem3}  reads as follows. 

\begin{theorem}\label{thm:infinitely2}
Let \eqref{condizione1}, $(M_1)$-$(M_2)$, and $(h_1)$-$(h_5)$ hold true.
Then, problem \eqref{problem3} has infinitely many weak solutions $(u_j)_j$ with unbounded energy.
\end{theorem}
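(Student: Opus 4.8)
The plan is to follow verbatim the strategy developed for Theorem \ref{thm:infinitely}, applying the Fountain Theorem \ref{thm:fountain} to the energy functional $\mathcal I$. By $(h_5)$ the functional $\mathcal I$ is even and of class $C^1$, so it remains to verify that (a) every Cerami sequence of $\mathcal I$ is bounded, (b) $\mathcal I$ satisfies the $(C)$ condition, and (c) the geometric conditions (i)--(ii) of Theorem \ref{thm:fountain} hold. Throughout, the only structural change with respect to the proof of Theorem \ref{thm:infinitely} is that the single coefficient $M[\phi_{\mathcal H}(\nabla u)]$ is replaced by the two coefficients $M(\|\nabla u\|_p^p)$ and $M(\|\nabla u\|_{q,a}^q)$; accordingly, wherever the arguments for \eqref{problem2} invoked the estimates \eqref{stimaA1} and \eqref{stimaB1}, I would instead use \eqref{stimaA2} and \eqref{stimaB2}.

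For the boundedness of Cerami sequences (the analogue of Lemma \ref{lemma4.1}), I would argue by contradiction assuming $\normasoh{u_j}\to\infty$ and set $v_j:=u_j/\normasoh{u_j}$. Combining $(M_1)$ with the elementary bounds $\frac{1}{p}\mathscr M(\|\nabla u_j\|_p^p)\ge\frac{1}{p\theta}M(\|\nabla u_j\|_p^p)\|\nabla u_j\|_p^p$ and $\frac{1}{q}\mathscr M(\|\nabla u_j\|_{q,a}^q)\ge\frac{1}{q\theta}M(\|\nabla u_j\|_{q,a}^q)\|\nabla u_j\|_{q,a}^q$, and using $p<q$ together with \eqref{stimaA2}, one obtains exactly the lower bound $\mathcal I(u_j)\ge \frac{A_2}{q\theta}\normasoh{u_j}^p-\int_\Omega F\,dx-\int_{\partial\Omega}G\,d\sigma$ that played the role of \eqref{4.6}. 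The upper bound for $\int_\Omega F$ forcing $v=0$ would come from \eqref{stimaB2} in place of \eqref{stimaB1}. Crucially, hypotheses $(h_1)$--$(h_4)$ only involve $f,g,F,G$, so the delicate estimates \eqref{4.7}, \eqref{4.8}, and \eqref{4.8bis} (where H\"older's inequality and $(h_4)$ enter) carry over unchanged, and the same contradiction is reached.

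The heart of the proof, and the place where genuinely new work is required, is the verification of the $(C)$ condition (the analogue of Lemma \ref{lemma4.2}); this is the main obstacle. After extracting a bounded subsequence I would additionally pass to the limits $\|\nabla u_j\|_p^p\to\ell_1$ and $\|\nabla u_j\|_{q,a}^q\to\ell_2$. The difficulty is that the principal operator is now $M(\|\nabla u\|_p^p)L_p(u)+M(\|\nabla u\|_{q,a}^q)L_q(u)$, where $\langle L_p(u),\varphi\rangle:=\into|\nabla u|^{p-2}\nabla u\cdot\nabla \varphi\,dx$ and $\langle L_q(u),\varphi\rangle:=\into a(x)|\nabla u|^{q-2}\nabla u\cdot\nabla \varphi\,dx$, rather than the scalar multiple $M(\ell)L$ appearing in \eqref{3.12}; hence Proposition \ref{P2.4} cannot be applied by simply factoring out a positive constant. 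My strategy would be to show separately that $\langle L_p(u_j),u_j-u\rangle\to0$ and $\langle L_q(u_j),u_j-u\rangle\to0$, after which $\langle L(u_j)-L(u),u_j-u\rangle\to0$ and Proposition \ref{P2.4} yields strong convergence. For each part, monotonicity together with weak convergence gives $\liminf_{j\to\infty}\langle L_p(u_j),u_j-u\rangle\ge0$ (and likewise for $L_q$), so only the reverse $\limsup\le0$ estimate is needed. If $\ell_1=0$, the bound $|\langle L_p(u_j),u_j-u\rangle|\le\|\nabla u_j\|_p^{p-1}\|\nabla(u_j-u)\|_p$ forces the term to vanish directly; if $\ell_1>0$, then $M(\|\nabla u_j\|_p^p)\to M(\ell_1)>0$ by $(M_2)$ and continuity, so the term can be isolated from the relation $M(\|\nabla u_j\|_p^p)\langle L_p(u_j),u_j-u\rangle+M(\|\nabla u_j\|_{q,a}^q)\langle L_q(u_j),u_j-u\rangle=o(1)$, which follows from \eqref{cerami} after discarding the lower-order contributions exactly as in \eqref{4.2.1}--\eqref{holder2}. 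The same dichotomy handles $L_q$, and combining the cases (noting that whenever one coefficient degenerates the corresponding pairing already tends to zero) closes the argument; the fully degenerate case $\ell_1=\ell_2=0$ is immediate, since then $\varrho_{\mathcal H}(\nabla u_j)=\|\nabla u_j\|_p^p+\|\nabla u_j\|_{q,a}^q\to0$ gives $\nabla u_j\to 0$ in $[L^{\mathcal H}(\Omega)]^N$ by Proposition \ref{P2.1}-(v).

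Finally, conditions (i) and (ii) of Theorem \ref{thm:fountain} would be checked exactly as in the proof of Theorem \ref{thm:infinitely}. For (i), I would use \eqref{stimaA2}, via the same splitting of $\frac{1}{p}\mathscr M(\nap^p)+\frac{1}{q}\mathscr M(\naq^q)$ by $(M_1)$, to obtain $\mathcal I(u)\ge C(\normasoh{u}^p-\beta_j\normasoh{u}-\beta_j^{r_1}\normasoh{u}^{r_1}-\xi_j\normasoh{u}-\xi_j^{r_2}\normasoh{u}^{r_2})$ for all $u\in Z_j$ with $\normasoh{u}>1$, and conclude through Lemma \ref{asintotica} with the same choice of $\gamma_j$. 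For (ii), I would repeat the finite-dimensional contradiction argument on $Y_j$, the only modification being that the chain \eqref{stimainfinito2} is bounded above using \eqref{stimaB2} instead of \eqref{stimaB1}. An application of Theorem \ref{thm:fountain} then produces a sequence of critical points of $\mathcal I$ with unbounded energy, completing the proof.
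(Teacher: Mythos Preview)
Your proposal is correct and follows essentially the same approach as the paper: the boundedness of Cerami sequences and the geometric conditions (i)--(ii) are obtained exactly as in Lemmas~\ref{lemma4.1} and the proof of Theorem~\ref{thm:infinitely}, with \eqref{stimaA2}--\eqref{stimaB2} replacing \eqref{stimaA1}--\eqref{stimaB1}, which is precisely what the paper does in Lemma~\ref{lemma5.1} and in the proof of Theorem~\ref{thm:infinitely2}. For the $(C)$ condition the paper (Lemma~\ref{lemma5.2}) organizes the analysis into the cases $M(0)=0$ versus $M(0)>0$ and then into four subcases according to which of $\ell_p,\ell_q$ vanish, whereas your dichotomy on $\ell_1,\ell_2$ carries out the same case split while absorbing the $M(0)>0$ situation uniformly --- a minor streamlining, not a different route.
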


Here, we point out that assumption $(h_3)$ could allow us to prove the boundedness of a Palais-Smale sequence for $\mathcal I$, but just in $W^{1,p}(\Omega)$. This fact is not enough considering $\mathcal I$ set in $W^{1,\mathcal H}(\Omega)$, with $W^{1,\mathcal H}(\Omega)\hookrightarrow W^{1,p}(\Omega)$ by Proposition \ref{P2.3}-(i).
For this, we exploit the same ideas employed in the proof of Theorem \ref{thm:infinitely} in order to prove Theorem \ref{thm:infinitely2}. That is, we start by showing that Cerami sequences of $\mathcal I$ are bounded. Then we use this property to show that $\mathcal I$ satisfies the ($C$) condition. Finally we apply  Theorem \ref{thm:fountain} to $\mathcal I$.

\begin{lemma}\label{lemma5.1}
Any Cerami sequence of $\mathcal I$ is bounded in $W^{1,\mathcal H}(\Omega)$.
\end{lemma}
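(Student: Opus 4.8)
The plan is to transcribe the argument of Lemma \ref{lemma4.1}, replacing the single Kirchhoff term $\mathscr{M}[\phi_{\cH}(\nabla u)]$ by the split energy $\frac1p\mathscr M(\|\nabla u\|_p^p)+\frac1q\mathscr M(\|\nabla u\|_{q,a}^q)$ and using the estimates \eqref{stimaA2} and \eqref{stimaB2} in place of \eqref{stimaA1} and \eqref{stimaB1}. Arguing by contradiction, I would suppose that $(u_j)_j$ is a Cerami sequence with $\normasoh{u_j}\to\infty$, and assume $\normasoh{u_j}>1$ for $j$ large. Setting $v_j:=u_j/\normasoh{u_j}$, so that $\normasoh{v_j}=1$, up to a subsequence $v_j\rightharpoonup v$ in $\soh$ with the compact convergences of Proposition \ref{P2.3}-(ii) and (iii). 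First I would show $v=0$: if $\Om^*:=\{x\in\Om:\,v(x)\neq0\}$ had positive measure, then $|u_j|\to\infty$ a.e.\ on $\Om^*$, and the superlinearity in $(h_2)$ together with Fatou's lemma forces $\into F(x,u_j)\normasoh{u_j}^{-q\theta}\,dx\to\infty$. On the other hand, from $\mathcal I(u_j)\geq -C$, the nonnegativity of $G$ granted by $(h_2)$, and \eqref{stimaB2}, one obtains $\into F(x,u_j)\,dx\leq B_2(1+\normasoh{u_j}^q+\normasoh{u_j}^{q\theta})+C$; dividing by $\normasoh{u_j}^{q\theta}$ and recalling $\theta\geq1$ gives a finite $\limsup$, a contradiction. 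Hence $v=0$ a.e.\ in $\Om$.

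The genuinely new ingredient is the control of $\mathcal I(u_j)-\frac{1}{q\theta}\langle\mathcal I'(u_j),u_j\rangle$. A direct computation gives
\begin{equation*}
\begin{aligned}
\mathcal I(u_j)-\frac{1}{q\theta}\langle\mathcal I'(u_j),u_j\rangle
&=\left[\frac1p\mathscr M(\|\nabla u_j\|_p^p)-\frac{1}{q\theta}M(\|\nabla u_j\|_p^p)\|\nabla u_j\|_p^p\right]\\
&\quad+\left[\frac1q\mathscr M(\|\nabla u_j\|_{q,a}^q)-\frac{1}{q\theta}M(\|\nabla u_j\|_{q,a}^q)\|\nabla u_j\|_{q,a}^q\right]\\
&\quad+\left(\frac1p-\frac{1}{q\theta}\right)\|u_j\|_p^p+\left(\frac1q-\frac{1}{q\theta}\right)\|u_j\|_{q,a}^q+\into\mathcal F(x,u_j)\,dx+\intor\mathcal G(x,u_j)\,d\sigma.
\end{aligned}
\end{equation*}
By $(M_1)$ one has $\frac1p\mathscr M(t)\geq\frac{1}{p\theta}tM(t)\geq\frac{1}{q\theta}tM(t)$ (using $p<q$) and $\frac1q\mathscr M(t)\geq\frac{1}{q\theta}tM(t)$, so both bracketed terms are nonnegative, while the two middle coefficients are nonnegative since $p<q\leq q\theta$ and $\theta\geq1$. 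Dropping these nonnegative quantities, using $(h_3)$, and bounding $\langle\mathcal I'(u_j),u_j\rangle$ via \eqref{cerami}, I obtain $\into\mathcal F(x,u_j)\,dx+\intor\mathcal G(x,u_j)\,d\sigma\leq C'$ for some $C'>0$, the analogue of \eqref{4.7}. In parallel, applying $(M_1)$ termwise once more together with $\frac1p\|u_j\|_p^p+\frac1q\|u_j\|_{q,a}^q\geq\frac{1}{q\theta}\varrho_{\cH}(u_j)$ and the lower bound \eqref{stimaA2}, from $\mathcal I(u_j)\leq C$ I get $C\geq\frac{A_2}{q\theta}\normasoh{u_j}^p-\into F(x,u_j)\,dx-\intor G(x,u_j)\,d\sigma$, whence $\liminf_{j\to\infty}\left[\into F(x,u_j)\normasoh{u_j}^{-p}\,dx+\intor G(x,u_j)\normasoh{u_j}^{-p}\,d\sigma\right]\geq\frac{A_2}{q\theta}>0$.

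To reach a contradiction I would show that both integrals above in fact tend to $0$. Splitting $\Om$ into $\{|u_j|\leq t_0\}$ and $\{|u_j|>t_0\}$, on the first set $(h_1)$ and $v=0$ give a vanishing contribution as in \eqref{4.8}, while on the second set $(h_4)$, H\"older's inequality, and the uniform bound $\into\mathcal F(x,u_j)\,dx\leq C'$ produce an estimate of the form $d_1^{1/s_1}(C')^{1/s_1}\|v_j\|_{ps_1'}^p$, which vanishes because $ps_1'<p^*$ forces $\|v_j\|_{ps_1'}\to0$ by Proposition \ref{P2.3}-(ii) and $v=0$, exactly as in \eqref{4.8bis}; the boundary integral is handled identically, using $ps_2'<p_*$ and Proposition \ref{P2.3}-(iii). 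This contradicts the strict lower bound just obtained, so $(u_j)_j$ must be bounded in $\soh$. I expect the only real point of care to be the termwise application of $(M_1)$ to the split Kirchhoff energy: one must keep the coefficients $1/p$ and $1/q$ paired with the $p$- and $q$-homogeneous gradient terms, respectively, and invoke $p<q$ together with $\theta\geq1$ to guarantee that every contribution other than $\into\mathcal F$ and $\intor\mathcal G$ carries the correct sign. The remaining steps are a direct transcription of Lemma \ref{lemma4.1}.
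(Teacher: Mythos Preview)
Your proposal is correct and follows exactly the approach the paper takes: the paper's own proof simply states that the argument of Lemma~\ref{lemma4.1} goes through verbatim, with the estimates \eqref{stimaA2} and \eqref{stimaB2} replacing \eqref{stimaA1} and \eqref{stimaB1}. You have written out in detail precisely these substitutions, including the termwise use of $(M_1)$ on the split Kirchhoff energy to obtain the analogue of \eqref{4.7}, so there is nothing to add.
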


\begin{proof}
The proof works exactly as in Lemma \ref{4.1}, with \eqref{differenza1} and \eqref{4.7} following  from \eqref{stimaA2} and \eqref{stimaB2}, respectively.
\end{proof}

\begin{lemma}\label{lemma5.2}
The functional $\mathcal I$ satisfies the $(C)$ condition.
\end{lemma}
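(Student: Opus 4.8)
The plan is to mirror the structure of Lemma~\ref{lemma4.2}, adapting each step from the operator $L$ to the decoupled $p$- and $q$-Laplacian pieces that appear in $\mathcal I$. Let $(u_j)_j \subset \soh$ be a Cerami sequence for $\mathcal I$. By Lemma~\ref{lemma5.1} it is bounded, so by the reflexivity of $\soh$ and Proposition~\ref{P2.3} I may extract a subsequence satisfying the same convergences as in \eqref{convergenze}, namely $u_j \weak u$ in $\soh$, $u_j \to u$ in $L^{\mathcal H}(\Om) \cap L^{\nu_1}(\Om) \cap L^{\nu_2}(\rand)$ for admissible $\nu_1, \nu_2$, with $\nabla u_j \weak \nabla u$ and, up to a further subsequence, $\|\nabla u_j\|_p^p \to \ell_1$ and $\|\nabla u_j\|_{q,a}^q \to \ell_2$ for some $\ell_1, \ell_2 \ge 0$.

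The goal is to prove $u_j \to u$ in $\soh$. As in the previous proof, the estimates \eqref{4.2.1}, \eqref{4.2.2}, \eqref{holder1} and \eqref{holder2} remain valid verbatim, since the lower-order and boundary terms of $\mathcal I$ are identical to those of $I$. Testing \eqref{cerami} with $u_j - u$ and using these four convergences, I obtain
\[
o(1) = \langle \mathcal I'(u_j), u_j - u \rangle = M(\|\nabla u_j\|_p^p)\into |\nabla u_j|^{p-2}\nabla u_j \cdot \nabla(u_j-u)\,dx + M(\|\nabla u_j\|_{q,a}^q)\into a(x)|\nabla u_j|^{q-2}\nabla u_j \cdot \nabla(u_j-u)\,dx + o(1).
\]
If both $\ell_1 > 0$ and $\ell_2 > 0$, then by $(M_2)$ and the continuity of $M$ we have $M(\|\nabla u_j\|_p^p) \to M(\ell_1) > 0$ and $M(\|\nabla u_j\|_{q,a}^q) \to M(\ell_2) > 0$. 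The main technical point is that, unlike the single operator $L$, here the two integrals are weighted by \emph{different} coefficients converging to different positive limits, so I cannot factor out a single $M(\ell)$. To handle this I would argue for each piece separately: defining the $p$-Laplacian and weighted $q$-Laplacian operators and showing each satisfies an $(S_+)$-type property (as in the standard splitting used in Proposition~\ref{P2.4}), I can pass to the limit in the coefficients and conclude $\limsup_j \langle L(u_j) - L(u), u_j - u\rangle \le 0$, whence $u_j \to u$ by Proposition~\ref{P2.4}.

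The degenerate cases $\ell_1 = 0$ and/or $\ell_2 = 0$ require separate treatment, exactly as the case $\ell = 0$ in Lemma~\ref{lemma4.2}. If $\ell_1 = 0$, then $\|\nabla u_j\|_p \to 0$, so by Proposition~\ref{P2.1}-(v) applied to the first component one controls the $p$-part of the gradient; if $\ell_2 = 0$, the weighted $q$-seminorm of the gradient vanishes. Combining whichever of these hold with the already-established strong convergence of $u_j$ to $u$ in $L^{\mathcal H}(\Om)$ and the $(S_+)$ argument on the nondegenerate component, I recover $\nabla u_j \to \nabla u$ in $[L^{\mathcal H}(\Om)]^N$ and hence $u_j \to u$ in $\soh$. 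I expect the main obstacle to be precisely this bookkeeping across the decoupled coefficients: ensuring that the $(S_+)$ machinery of Proposition~\ref{P2.4}, which was stated for the combined operator $L$, can be legitimately applied when the two summands carry distinct Kirchhoff weights that may individually degenerate. Once the positivity of the relevant limit coefficients is secured via $(M_2)$ and the convergence $\phi_{\mathcal H}(\nabla u_j)$-type quantities are pinned down, the conclusion follows as before.
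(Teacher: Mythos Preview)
Your proposal is correct and follows essentially the same route as the paper: boundedness via Lemma~\ref{lemma5.1}, the same lower-order estimates \eqref{4.2.1}--\eqref{holder2}, extraction of subsequential limits $\ell_1,\ell_2$ for $\|\nabla u_j\|_p^p$ and $\|\nabla u_j\|_{q,a}^q$, and a case analysis on whether these vanish.

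The one point worth sharpening is precisely the ``main obstacle'' you flag. The paper does \emph{not} invoke separate $(S_+)$ properties for the individual $p$- and weighted $q$-operators (indeed the weighted $q$-part alone is not $(S_+)$ on $\soh$ when $a$ may vanish). Instead, in the nondegenerate case $\ell_1,\ell_2>0$ it uses the pointwise monotonicity inequality $(|\xi|^{r-2}\xi-|\eta|^{r-2}\eta)\cdot(\xi-\eta)\ge 0$ for each summand to write
\[
\min\{M(\ell_1),M(\ell_2)\}\,\limsup_{j\to\infty}\langle L(u_j)-L(u),u_j-u\rangle\le 0,
\]
and then applies Proposition~\ref{P2.4} for the \emph{combined} operator $L$. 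In the mixed cases (say $\ell_1=0$, $\ell_2>0$, with $M(0)=0$) the paper first bounds the $p$-integral by $\|\nabla u_j\|_p^{p-1}\|\nabla(u_j-u)\|_p\to 0$, then reads off from the tested equation that the $q$-integral also tends to $0$, and again concludes $\langle L(u_j),u_j-u\rangle\to 0$ and invokes Proposition~\ref{P2.4}. The paper also separates the case $M(0)>0$, which collapses immediately to the nondegenerate argument since then $M(\ell_i)>0$ regardless. With these clarifications your outline matches the paper's proof.
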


\begin{proof}
Let $(u_j)_j\subset W^{1,\mathcal H}(\Omega)$ be a sequence satisfying \eqref{cerami} with $E=\mathcal I$. 
Thanks to Lemma \ref{lemma5.1}, $(u_j)_j$ is bounded in $W^{1,\mathcal H}(\Omega)$.
Hence, by Propositions \ref{P2.1}-\ref{P2.3}  and the reflexivity of $W^{1,\mathcal H}(\Omega)$, there exist a subsequence, still denoted by $(u_j)_j$, and $u\in W^{1,\mathcal H}(\Omega)$ such that
	\begin{equation}\label{convergenze2}
	\begin{gathered}
	u_j\to u\mbox{ in }L^{\mathcal H}(\Omega),
	\qquad\nabla u_j\rightharpoonup\nabla u\mbox{ in }\left[L^{\mathcal H}(\Omega)\right]^N,
	\qquad\|\nabla u_j\|_p\rightarrow\ell_p,\\
	\|\nabla u_j\|_{q,a}\rightarrow\ell_q,\qquad u_j\rightharpoonup u\mbox{ in }\soh,\qquad u_j\to u\mbox{ in }L^{\nu_1}(\Omega)\cap L^{\nu_2}(\partial\Omega),
	\end{gathered}
	\end{equation}
as $j\to\infty$, with $\nu_1\in[1,p^*)$ and $\nu_2\in[1,p_*)$.
We apply \eqref{cerami} together with \eqref{4.2.1}--\eqref{holder2} and \eqref{convergenze2}, to have
	\begin{align}\label{n3.12}
	o(1)=\langle \mathcal I'(u_j),u_j-u\rangle=&M(\|\nabla u_j\|_p^p)\int_\Omega|\nabla u_j|	^{p-2}\nabla u_j\cdot(\nabla u_j-\nabla u)\;dx\nonumber\\
	&+M(\|\nabla u_j\|_{q,a}^q)\int_\Omega a(x)|\nabla u_j|^{q-2}\nabla u_j\cdot(\nabla u_j-\nabla u)\;dx\nonumber\\
	&+\int_\Omega |u_j|^{p-2}u_j(u_j-u)\;dx+\int_\Omega a(x) |u_j|^{q-2}u_j(u_j-u)\;dx\\
	&-\int_\Omega f(x,u_j)(u_j-u)\;dx-\int_{\partial\Omega} g(x,u_j)(u_j-u)\;d\sigma\nonumber\\
	=&M(\ell_p^p)\int_\Omega|\nabla u_j|^{p-2}\nabla u_j\cdot(\nabla u_j-\nabla u)\;dx\nonumber\\
	&+M(\ell_q^q)\int_\Omega a(x)|\nabla u_j|^{q-2}\nabla u_j\cdot(\nabla u_j-\nabla u)\;dx+o(1)\nonumber\quad\text{as $j\to\infty$}.
	\end{align}
We need now to distinguish two situations, that depend on  the behavior of $M$ at zero.

\begin{case}\label{step1}
{\em Let $M$ verify $M(0)=0$.}
\end{case} 
\noindent
Since $\ell_p\geq0$ and $\ell_q\geq0$ in \eqref{convergenze2}, we further distinguish  four subcases.

\begin{subcase}
{\em Let $\ell_p=0$ and $\ell_q=0$.}
\end{subcase}
\noindent
By \eqref{convergenze2} we have $\|\nabla u_j\|_p\to0$ and $\|\nabla u_j\|_{q,a}\to0$ as $j\to\infty$, which thanks to \eqref{rhoh} and Proposition \ref{P2.1} implies that $\nabla u_j\to\overline{0}$ in $[L^{\mathcal H}(\Omega)]^N$. Hence, $u_j\to u$ in $W^{1,\mathcal H}(\Omega)$ as $j\to\infty$, with $u$ constant a.e. in $\Omega$.

\begin{subcase}\label{case2}
{\em Let $\ell_p=0$ and $\ell_q>0$.}
\end{subcase}
\noindent
From \eqref{n3.12} and $(M_2)$ we have
	\begin{equation}\label{1944}
	\lim_{j\to\infty}\int_\Omega a(x)|\nabla u_j|^{q-2}\nabla u_j\cdot(\nabla u_j-\nabla u)\;dx=0.
	\end{equation}
Moreover, since $(\nabla u_j)_j$ is bounded in $(L^p(\Om))^N$ thanks to \eqref{convergenze2}, it follows that
	\begin{equation*}
	\begin{split}\label{partelp}
    \limsup_{j\to\infty}\left|\int_\Omega |\nabla u_j|^{p-2}\nabla u_j\cdot(\nabla u_j-\nabla u)\;dx\right|&\leq\limsup_{j\to\infty}\|\nabla u_j\|_p^{p-1}\|\nabla u_j-\nabla u\|_p\\
    &\leq\ell_p^{p-1} \limsup_{j\to\infty}\|\nabla u_j-\nabla u\|_p=0.
  % &=\ell_p^{p-1}\limsup_{j\to\infty}%\|\nab%la u_j-\nabla u\|_p=0,
   \end{split}
	\end{equation*}
Then we have
	\[
	\lim_{j\to\infty}\langle L(u_j),u_j-u\rangle=0.
	\]
This fact, together with \eqref{3.13} and Proposition \ref{P2.4}, allows to conclude that $u_j\to u$ in $\soh$.

\begin{subcase}\label{case3}
{\em Let $\ell_p>0$ and $\ell_q=0$.}
\end{subcase}
\noindent
The proof works exactly as in Subcase \ref{case2}, hence we omit the details.

\begin{subcase}\label{case4}
{\em Let $\ell_p>0$ and $\ell_q>0$.}
\end{subcase}
\noindent
It follows that
	\begin{equation}\label{n3.122}
	\lim_{j\to\infty}\int_\Omega |\nabla u|^{p-2}\nabla u\cdot \nabla(u_j-  u) \;dx=0,
	\end{equation}
as well as 
	\begin{equation}\label{mancante}
    \lim_{j\to\infty}\into a(x)|\nabla u|^{q-2}\nabla u\cdot \nabla(u_j- u)\;dx=0.
	\end{equation}
Therefore \eqref{n3.12}, \eqref{n3.122}, and \eqref{mancante} yield    
	\begin{equation}\label{n3.13}
	\begin{aligned}
	M&(\ell_p^p)\int_\Omega\left(|\nabla u_j|^{p-2}\nabla u_j-|\nabla u|^{p-2}\nabla u\right)\cdot(\nabla u_j-\nabla u)\;dx\\
	&+M(\ell_q^q)\int_\Omega a(x)\left(|\nabla u_j|^{q-2}\nabla u_j-|\nabla u|^{q-2}\nabla u\right)\cdot(\nabla u_j-\nabla u)\;dx=o(1)\quad\text{as $j\to\infty$.}
	\end{aligned}
	\end{equation}
Exploiting the convexity of the two Laplacian operators of $p$ and $q$ types and the fact that $a(x)\geq0$ a.e. in $\Omega$ thanks to \eqref{condizione1}, we get
	\[
	\begin{aligned}
	&\left(|\nabla u_j|^{p-2}\nabla u_j-|\nabla u|^{p-2}\nabla u\right)\cdot(\nabla u_j-\nabla u)
\geq0\mbox{ a.e. in }\Omega,\\
	&a(x)\left(|\nabla u_j|^{q-2}\nabla u_j-|\nabla u|^{q-2}\nabla u\right)\cdot(\nabla u_j-\nabla u) \geq0\mbox{ a.e. in }\Omega
	\end{aligned}
	\]
Therefore, from  \eqref{n3.13} we have
	\[
	\min\left\{M(\ell_p^p),M(\ell_q^q)\right\}\limsup_{j\to\infty}\langle L(u_j)-L(u),u_j-u\rangle\leq0,
	\]
being $M(\ell_p^p)$, $M(\ell_q^q)>0$ thanks to $(M_2)$. Hence we can use Proposition \ref{P2.4} and \eqref{convergenze2} to conclude that $u_j\to u$ in $W^{1,\mathcal H}(\Omega)$ as $j\to\infty$. This completes the proof of Case \ref{step1}.

\begin{case}
{\em Let $M$ verify $M(0)>0$.}
\end{case}
\noindent 
Since by $(M_2)$ we have that $M(\ell_p^p)$, $M(\ell_q^q)>0$ for $\ell_p$, $\ell_q\geq0$, we can argue exactly as in  Subcase \ref{case4} to get the conclusion. 
\end{proof}

\begin{proof}[Proof of Theorem \ref{thm:infinitely2}]

The proof works exactly as for Theorem \ref{thm:infinitely}, with  \eqref{3.19bis} and \eqref{stimainfinito2} following from \eqref{stimaA2} and \eqref{stimaB2},  respectively.
\end{proof}

\section*{Acknowledgments}

The authors are members of {\em Gruppo Nazionale per l'Analisi Ma\-te\-ma\-ti\-ca, la Probabilit\`a e le loro Applicazioni} (GNAMPA) 
of the {\em Istituto Nazionale di Alta Matematica} (INdAM).

A. Fiscella is partially supported by INdAM-GNAMPA project titled {\em Equazioni alle derivate parziali: problemi e modelli} and by the FAPESP Thematic Project titled {\em Systems and partial differential equations} (2019/02512-5).

A. Pinamonti is partially supported by the INdAM-GNAMPA project {\em Convergenze variazionali per funzionali e operatori dipendenti da campi vettoriali}.


\begin{thebibliography}{99}

\bibitem{BCM}{\sc P. Baroni, M. Colombo and G. Mingione}, 
{\em Harnack inequalities for double phase functionals}, Nonlinear Anal. 121 (2015) 206--222.

\bibitem{BCM2}{\sc P. Baroni, M. Colombo and G. Mingione}, 
{\em Regularity for general functionals with double phase}, Calc. Var. Partial Differential Equations 57 (2018) paper no. 62, 48 pp. 

\bibitem{BBF} {\sc T. Bartolo, V. Benci, D. Fortunato},
{\em Abstract critical point theorems and applications to some nonlinear problems with strong resonance at infinity},  Nonlinear Anal. 7 (1983) 241--273.

\bibitem{B} {\sc H. Br\'ezis}, {\em Functional analysis, Sobolev spaces and partial differential equations},
Universitext, Springer, New York, 2011.

\bibitem{CS} {\sc F. Colasuonno and M. Squassina},
{\em Eigenvalues for double phase variational integrals},  Ann. Mat. Pura Appl. (4) 195 (2016) 1917--1959.

\bibitem{CM1}{\sc M. Colombo and G. Mingione}, 
{\em Bounded minimisers of double phase variational integrals}, Arch. Ration. Mech. Anal. 218 (2015) 219--273.

\bibitem{CM2}{\sc F.M. Colombo and G. Mingione}, 
{\em Regularity for double phase variational problems},
Arch. Ration. Mech. Anal. 215 (2015) 443--496.

\bibitem{CGHW}
	{\sc \'{A}.~Crespo-Blanco, L.~Gasi\'nski, P.~Harjulehto and P.~Winkert},
	{\em A new class of double phase variable exponent problems: Existence and uniqueness},
	preprint, https://arxiv.org/abs/2103.08928

\bibitem{D} {\sc L. Diening, P. Harjulehto, P. H\"ast\"o and M. R\.{u}\v{z}i\v{c}ka}, {\em Lebesgue and Sobolev spaces with variable exponents}, Lecture Notes in Mathematics, vol. 2017, Springer, Heidelberg, 2011.

\bibitem{EMW} {\sc S. El Manouni, G. Marino, and P. Winkert}, \emph{Existence results for double phase problems depending on Robin and Steklov eigenvalues for the $p$-Laplacian}, Adv. Nonlinear Anal. 11(1) (2022) 304--320.

\bibitem{FFW} {\sc C. Farkas, A. Fiscella and P. Winkert},	{\em Singular Finsler double phase problems with nonlinear boundary condition},
	Adv. Nonlinear Stud., https://doi.org/10.1515/ans-2021-2143

\bibitem{FW} {\sc	C. Farkas and P. Winkert}, {\em An existence result for singular Finsler double phase problems},
	J. Differential Equations 286 (2021) 455--473.

\bibitem{F} {\sc	A. Fiscella},	{\em A double phase problem involving Hardy potentials}, accepted in Appl. Math. Optim.

\bibitem{FP}{\sc A. Fiscella and A. Pinamonti}, \emph{Existence and multiplicity results for Kirchhoff type problems on a double phase setting}, preprint, https://arxiv.org/abs/2008.00114

\bibitem{GP}{\sc L. Gasi\'nski and N.S. Papageorgiou}, {\em Constant sign and nodal solutions for superlinear double phase problems}, Adv. Calc. Var. 14 (2021) 613--626.

\bibitem{GW2019}{\sc L. Gasi\'nski and P. Winkert}, {\em Constant sign solutions for double phase problems with superlinear nonlinearity}, Nonlinear Anal. 195 (2020) 111739. 

\bibitem{GW1} {\sc L. Gasi\'nski and P. Winkert}, {\em Sign changing solution for a double phase problem with nonlinear boundary condition via the Nehari manifold}, J. Differential Equations 274 (2021) 1037--1066.

\bibitem{GLL} {\sc B. Ge, D.J. Lv and J.F. Lu},
{\em Multiple solutions for a class of double phase problem without the Ambrosetti-Rabinowitz conditions}, Nonlinear Anal. 188 (2019) 294--315.

%\bibitem{az}{\sc Y. Jabri}, 
%{\em The mountain Pass Theorem Variants, Generalizations and some Applications}, 
%in: Encyclopedia of Mathematics and its Applications, vol. $95$,
%Cambridge University Press, Cambridge, 2003, xii+369 pp.

\bibitem{Le}{\sc A. L\^e}, \emph{Eigenvalue problems for the $p$-Laplacian}, Nonlinear Anal. {\bf 64} (2006), no. 5, 1057--1099.

\bibitem{L} {\sc S. Liu}, {\em On superlinear problems without the Ambrosetti and Rabinowitz condition}, Nonlinear Anal. 73 (2010) 788--795.

\bibitem{LD} {\sc W. Liu and G. Dai},
{\em Existence and multiplicity results for double phase problem}, J. Differential Equations 265 (2018) 4311--4334.

\bibitem{M1} {\sc P. Marcellini}, {\em Regularity of minimisers of integrals of the calculus of variations with
non standard growth conditions}, Arch. Ration. Mech. Anal. 105 (1989) 267--284.

\bibitem{M2} {\sc P. Marcellini}, {\em Regularity and existence of solutions of elliptic equations with
$(p,q)$--growth conditions}, J. Differential Equations 90 (1991) 1--30.

\bibitem{M} {\sc J. Musielak},
{\em Orlicz spaces and modular spaces}, Lecture Notes in Math. 1034, Springer, Berlin, 1983.


\bibitem{PRR}{\sc N.S. Papageorgiou, V.D.R\u{a}dulescu, and D.D.Repov\v{s},}, {\em Ground state and nodal solutions for a class of double phase problems}, Z. Angew. Math. Phys. 71 (2020).

\bibitem{PS} {\sc K. Perera and M. Squassina},
{\em Existence results for double-phase problems via Morse theory},  Commun. Contemp. Math. 20 (2018) 14 pp.

%\bibitem{S} {\sc J. Simon},
%{\em R\'egularit\'e de la solution d'une \'equation non lin\'eaire dans $\mathbb R^n$}, in Journ\'ees d'Analyse Non Lin\'eaire, P.
%Benilan and J. Robert eds., Lecture Notes in Math. 665, Springer, Berlin, 1978, 205-227.

\bibitem{Wil}{\sc M. Willem},
{\em Minimax Theorems}, Birkh\"auser, 1996.

\bibitem{Z1} {\sc V.V. Zhikov}, {\em Averaging of functionals of the calculus of variations and elasticity theory}, Izv. Akad. Nauk SSSR Ser. Mat. 50 (1986) 675--710.

\bibitem{Z2} {\sc V.V. Zhikov}, {\em On Lavrentiev's phenomenon}, Russian J. Math. Phys. 3 (1995) 249--269.

\bibitem{Z3} {\sc V.V. Zhikov}, {\em On some variational problems}, Russian J. Math. Phys. 5 (1997) 105--116.

\bibitem{ZK} {\sc V.V. Zhikov, S.M. Kozlov and O.A. Oleinik}, {\em Homogenization of Differential Operators and Integral Functionals}, Springer, Berlin, 1994.

\end{thebibliography}
\end{document}